\numberwithin{equation}{section}
\newtheorem{theorem}{Theorem}[section]
\newtheorem{corollary}[theorem]{Corollary}
\newtheorem{lemma}[theorem]{Lemma}
\newtheorem{proposition}[theorem]{Proposition}
\theoremstyle{definition}
\newtheorem{definition}[theorem]{Definition}
\newtheorem{remark}[theorem]{Remark}
\newtheorem{example}[theorem]{Example}
\newtheorem{construction}[theorem]{Construction}
\newcommand{\arxivlink}[1]{\href{http://arxiv.org/abs/#1}{\texttt{arXiv:#1}}}
\newcommand{\fib}{\mathrm{fib}}
\newcommand{\gp}{\mathrm{gp}}
\newcommand{\Der}{\mathrm{Der}}
\newcommand{\Map}{\mathrm{Map}}
\newcommand{\bfHom}{\mathrm{\mathbf{Hom}}}
\newcommand{\sP}{\mathcal{P}}
\newcommand{\sL}{\mathcal{L}}
\newcommand{\IL}{\mathbb{L}}
\newcommand{\IZ}{\mathbb{Z}}
\newcommand{\IN}{\mathbb{N}}
\DeclareMathOperator{\sMod}{sMod}
\DeclareMathOperator{\id}{id}
\DeclareMathOperator{\Mod}{Mod}
\DeclareMathOperator{\Rog}{Rog}
\DeclareMathOperator{\Ext}{Ext}
\DeclareMathOperator{\Ho}{Ho}
\DeclareMathOperator{\str}{str}
\DeclareMathOperator{\dLogAff}{dLogAff}
\DeclareMathOperator{\Hom}{Hom}
\DeclareMathOperator{\fs}{fs}
\DeclareMathOperator{\ab}{ab}
\DeclareMathOperator{\pre}{pre}
\DeclareMathOperator{\Spec}{Spec}
\title{Derived logarithmic geometry I}
\author{Steffen Sagave} \address{Steffen Sagave, Department of Mathematics and Informatics, Bergische Universit{\"a}t Wuppertal, Gau{\ss}\-str. 20, 42119 Wuppertal, Germany} 
\email{sagave@math.uni-wuppertal.de}
\author{Timo Sch\"urg} \address{Timo Sch\"urg, University of Augsburg, 
Universit\"atsstr. 14, 86159 Augsburg, Germany} 
\email{timo.schuerg@math.uni-augsburg.de}
\author{Gabriele Vezzosi} \address{Gabriele Vezzosi, Institut de Math\'ematiques de Jussieu - UMR7586 Batiment Sophie Germain
Case 7012
75205 PARIS Cedex 13}
\email{gabriele.vezzosi@math.imj-prg.fr}
\date{\today}
\begin{document}
\begin{abstract}
  In order to develop the foundations of derived logarithmic geometry,
  we introduce a model category of logarithmic simplicial rings and a
  notion of derived log-\'etale maps and use this to define derived log
  stacks.
\end{abstract}
\keywords{Log structures, derived algebraic geometry, cotangent complex, simplicial commutative rings}
\subjclass{14A20, 14D23, 13D03}
\maketitle
\section{Introduction}
Before discussing the contents of the present paper, we first want to give some 
motivation why a solid theory of derived logarithmic geometry is a 
desirable thing to have.

An important application of logarithmic geometry has been to control 
degenerations. A typical example is given by a dominant morphism $f 
\colon X \to C$ from a smooth scheme $X$ to a pointed curve $(C,p)$, 
where we assume that the restriction $X \setminus X_p \to C \setminus p$ 
is smooth and the fibre $X_p$ is a normal crossing divisor. If we denote 
by $j \colon X \setminus X_p \to X$ and $i \colon X_p \to X$ the 
inclusions, then $i^*(j_* \mathcal{O}^{\times}_{X \setminus X_p})\to 
\mathcal{O}_{X_p}$ defines a log structure on $X_p$. In the opposite 
vein, given a normal crossing variety~$Y$, the existence of certain 
logarithmic structures on $Y$ helps in determining if $Y$ can be 
obtained as the fibre of morphism $X \to C$ as above (see 
\cite{abramovich}*{Section 5} and the references therein for this point 
of view).

A further striking example where logarithmic geometry helps to control 
degenerations is given by the Deligne--Mumford compactification of the 
moduli space of curves. This compactification can also be obtained by 
studying the moduli problem of stable log-smooth curves satisfying a 
certain basicness condition. Since logarithmic geometry incorporates 
degenerations, the moduli space of log-smooth curves is immediately 
compact. An overview over these topics can be found in~\cite{abramovich}.

On the other hand, derived algebraic geometry has been successfully 
applied to study hidden smoothness in moduli spaces. A typical example 
is given by the moduli space of morphisms between a smooth curve and a 
smooth projective variety. Even for smooth domain and target, this 
moduli space can be horribly singular and much larger than the expected 
dimension. Studying the same moduli problem in derived algebraic 
geometry leads to an interesting ``nilpotent'' structure on the moduli 
space \cite{hagII}*{Corollary 2.2.6.14}. This structure provides the 
algebraic-geometric counterpart to deforming to transversal 
intersection. Equipped with this nilpotent structure, the moduli space 
becomes quasi-smooth, the immediate generalization to derived algebraic 
geometry of local complete intersection. The quasi-smooth structure 
induces a 1--perfect obstruction theory  and a virtual fundamental class 
in the expected dimension on the underlying moduli space, which is the 
key to many enumerative invariants.

Logarithmic and derived algebraic geometry naturally meet in the study
of degenerations of moduli spaces. Suppose we are given a morphism $f
\colon X \to S$ as above. We would then like to understand how some
moduli space attached to a smooth fiber interacts with the
corresponding moduli space of the fiber $X_s$.  If the moduli spaces
are quasi-smooth, one would ideally want to compute enumerative
invariants of the smooth fiber in terms of enumerative invariants of
the components of $X_s$.

In case $X_s$ only consists of two components, this has been indeed 
carried out by Jun Li in \cites{li1, li2}. Instead of using log 
geometry, Li constructs an explicit degeneration of the moduli space of 
stable maps. The most difficult part in Li's theory is to find a perfect 
obstruction theory on the moduli space attached to the fiber $X_s$.  
Using this degeneration, he is able to prove a formula for 
enumerative invariants that since has found many applications.

Gross and Siebert \cite{siebert} have recently observed that one can
circumvent these difficulties by working in the category of
logarithmic schemes.  The moduli space attached to the fiber $X_s$
should just be the corresponding moduli functor taken in the category
of logarithmic schemes, where $X_s$ is equipped with its natural
logarithmic structure.  If on top of this we want the moduli space
attached to $X_s$ to carry a 1-perfect obstruction theory, one is
naturally led to consider derived logarithmic geometry. The correct
functor that combines both the degeneration aspects as well as hidden
smoothness is a moduli functor living in the category of derived
logarithmic schemes or stacks.

Besides applications to degenerations of quasi-smooth moduli spaces,
there are also other areas where such a theory might be useful. Much
of the work on logarithmic geometry has been concerned with $p$-adic
and arithmetic aspects. Recently, Beilinson \cite{beilinson} has used
derived logarithmic geometry to prove a $p$-adic
Poincar\'e lemma. Much more material on this can be found in
\cite{bh}. It may also be interesting to extend the framework of
derived log geometry developed here to the homotopy theoretic notion
of \emph{logarithmic ring spectra} developed by Rognes
in~\cite{rog-log} in order to study moduli problems for structured
ring spectra.

We hope that now the reader is convinced that it would be desirable to
have a solid theory of derived logarithmic geometry.  The aim of this
work is to begin providing such foundations. The essential starting
point for derived algebraic geometry is that the category of
simplicial rings forms a well-behaved model category. In Sections 1 to
3 we provide a model category $s\mathcal L$ of \emph{logarithmic
  simplicial rings}. Its objects are simplicial objects in the
category of pre-log rings, and the fibrant objects in this model
structure satisfy a log condition analogous to that of a log ring.
Besides that, we give a model category description of the group
completion of simplicial commutative monoids and outline how this
leads to a notion of \emph{repletion} for augmented simplicial
commutative monoids. Although the repletion is not necessary for setting
up the model category $s\mathcal L$, it might become relevant for a
further development of the theory. All the model structures developed
in this part have counterparts in the context of structured ring
spectra that complement Rognes' work on \emph{topological logarithmic
  structures}~\cite{rog-log}.

In Sections 4 to 6 we develop the theory of \'etale and smooth
morphisms between logarithmic simplicial rings. The key ingredient in
defining these notions is the logarithmic cotangent complex. We define 
the logarithmic cotangent complex as the complex that represents the 
derived functor of logarithmic derivations. Since for a logarithmic ring 
$(A,M)$ the category of $A$-modules is equivalent to the category of 
abelian objects in the category of strict logarithmic rings over 
$(A,M)$, this exhibits the logarithmic cotangent complex as the left 
derived functor of abelianization, which is very close to Quillen's 
original definition for ordinary rings.  This coincides with Gabber's 
definition in \cite{olsson}*{\S 8}, and we prove that it also 
corresponds to Rognes' definition for structured ring spectra 
in~\cite{rog-log}. We also compare our notions of log-smooth and 
log-\'etale maps to the
definitions given by Kato in terms of lifting properties with respect
to strict square zero extensions.

In Section 7 we glue logarithmic simplicial rings to form derived 
logarithmic schemes and derived logarithmic $n$--stacks. We conclude 
this section with some speculations about the correct notion of
log-modules. In Section 8 we explain how to set up a derived version
of the logarithmic moduli of stable maps introduced by Gross and
Siebert.

\subsection*{Acknowledgments} Bhargav Bhatt has also recently started 
laying the foundations of derived log geometry in~\cite{bh}*{\S 4}.  The 
third author wishes to thank him for interesting exchanges on the 
subject.  Thanks are also due to Vittoria Bussi for her interest and 
useful discussions. The authors thank the referee for a quick and 
careful reading and a large number of suggestions improving the text.

\subsection*{Notations} If $k$ is a base commutative ring, then the 
category of pre-log $k$-algebras will consist of triples $(A, M, 
\alpha\colon M \to (A,\cdot))$ where $A$ is a commutative $k$-algebra, 
$M$ is a commutative monoid and $\alpha$ is a morphism of commutative 
monoids, and the morphisms $(A, M, \alpha\colon M \to (A,\cdot)) \to (B, 
N, \alpha\colon N \to (B,\cdot))$ will be pairs $(f\colon A \to 
B,f^{\flat}:M \to N)$, where $f$ is a map of $k$-algebras and 
$f^{\flat}$ a map of commutative monoids, commuting with the structure 
maps.  When the base ring is $k=\mathbb{Z}$ we will simply speak about 
pre-log rings.

\section{Simplicial commutative monoids}\label{sec:simp-com-mono}
In the following, we let $\mathcal M$ be the category of commutative
monoids, $\mathcal{AB}$ be the category of abelian groups, and
$\mathcal R$ be the category of commutative rings. Moreover, $\mathcal S$
denotes the category of simplicial sets, and $s\mathcal M$,
$s\mathcal{AB}$, and $s\mathcal R$ denote the categories of simplicial
objects in commutative monoids, abelian groups, and commutative rings.

The categories $s\mathcal M$, $s\mathcal{AB}$, and $s\mathcal R$ are
simplicial categories (as for example defined
in~\cite{Goerss-J_simplicial}*{II.Definition 2.1}). This means that
they are enriched, tensored, and cotensored over the category of
simplicial sets. In each case, the tensor $X \otimes K$ of an object
is the realization of the bisimplicial object $[n] \mapsto
\coprod_{K_n}X$ where $\coprod$ is the coproduct in the respective
category. The simplicial mapping spaces are given by
$\bfHom(X,Y)_n = \Hom(X\otimes \Delta^n,Y)$, and the
cotensor is defined on the underlying simplicial sets. There exist
well-known model structures on these categories:

\begin{proposition}\label{prop:model-str-on-sR-and-sM}
  The categories of simplicial commutative rings $s\mathcal R$,
  simplicial abelian groups $s\mathcal{AB}$, and simplicial
  commutative monoids $s\mathcal M$ admit proper simplicial 
  cellular model structures. In all three cases, a map is a fibration
  (resp. weak equivalence) if and only if the underlying map of simplicial sets is
  a fibration (resp. weak equivalence).
\end{proposition}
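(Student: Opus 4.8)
The plan is to construct all three model structures simultaneously by transfer (right induction) along the free--forgetful adjunction $F\colon \mathcal S \rightleftarrows s\mathcal C \colon U$, where $\mathcal C$ is one of $\mathcal M$, $\mathcal{AB}$, $\mathcal R$ and $U$ sends a simplicial object to its underlying simplicial set. Each of these categories is the category of simplicial objects in a variety of algebras given by a finitary algebraic theory, and applying the free algebra functor degreewise produces the left adjoint $F$ to $U$. The Kan--Quillen model structure on $\mathcal S$ is cofibrantly generated with generating cofibrations $I=\{\partial\Delta^n\hookrightarrow\Delta^n\}$ and generating acyclic cofibrations $J=\{\Lambda^n_k\hookrightarrow\Delta^n\}$. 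I would declare a map in $s\mathcal C$ to be a fibration (resp. weak equivalence) exactly when $U$ of it is one, so that $U$ creates these classes and the stated characterization holds by construction; the cofibrations are then forced by the left lifting property.

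To run the transfer theorem (in the formulation of Goerss and Jardine, \cite{Goerss-J_simplicial}*{Chapter~II}) I would verify three points. First, $s\mathcal C$ is complete and cocomplete, as holds for simplicial objects in any variety of algebras. Second, the smallness needed for the small object argument: since the theory is finitary, $U$ preserves filtered colimits, so the domains of $FI$ and $FJ$ are small and both small object arguments apply. Third, and this is the crux, the acyclicity condition that every relative $FJ$--cell complex is a weak equivalence. I would verify this by the path-object argument, using that $s\mathcal C$ is cotensored over $\mathcal S$ with the cotensor computed on underlying simplicial sets: for a fibrant object $X$ the object $X^{\Delta^1}$ is a functorial path object, because $U(X^{\Delta^1})=(UX)^{\Delta^1}$ with $UX$ a Kan complex, so that $X\to X^{\Delta^1}$ is a weak equivalence and $X^{\Delta^1}\to X^{\partial\Delta^1}=X\times X$ a fibration. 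Combined with the fibrant replacement supplied by the small object argument on $FJ$, this gives a cofibrantly generated model structure with generating (acyclic) cofibrations $FI$ (resp. $FJ$).

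It remains to upgrade this to a proper simplicial cellular structure. For the axiom SM7 I would use the adjoint cotensor formulation: since fibrations and weak equivalences are detected by $U$ and the cotensor is computed on underlying simplicial sets, the pullback-corner map of $X^{\Delta^n}\to X^{\partial\Delta^n}$ reduces to the corresponding statement in the simplicial model category $\mathcal S$. For cellularity in the sense of Hirschhorn, finitariness makes the domains and codomains of $FI$ compact and the domains of $FJ$ small relative to $FI$, and the cofibrations are effective monomorphisms because the relevant equalizers and colimits in $s\mathcal C$ are governed by the algebraic structure. Right properness is immediate, since pullbacks, fibrations, and weak equivalences are all created by $U$ and $\mathcal S$ is right proper. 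The main obstacle is left properness, since pushouts in $s\mathcal C$ are \emph{not} computed on underlying simplicial sets and $U$ does not preserve them; I would address it by reducing to free extensions and analyzing the cobase change of a weak equivalence through the induced filtration on symmetric powers, or, more expediently, by invoking Rezk's general result that the homotopy theory of simplicial algebras over a finitary theory admits a proper model.
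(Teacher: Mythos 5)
Most of your proposal follows the same route as the paper: the paper's citation of \cite[II.4 Theorem 4]{Quillen_homotopical} and \cite[II.Corollary 5.6]{Goerss-J_simplicial} for existence is exactly the transfer argument you spell out (path objects from the underlying cotensor plus fibrant replacement from the small object argument), and your treatments of right properness, SM7, cofibrant generation by $FI$ and $FJ$, and cellularity match the paper's (the last at the same level of detail as the paper's appeal to \cite[Appendix A]{Sagave-S_group-compl}). The genuine problem is the left properness step, which is the only delicate point in the whole proposition, and your ``expedient'' option for it does not work: Rezk's general theorem asserts that the homotopy theory of simplicial $T$-algebras is Quillen equivalent to \emph{some} proper model category; it does not assert that the transferred model structure on $s\mathcal C$ is itself proper. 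Properness is a property of a particular model structure, not of its homotopy theory, and it is not invariant under Quillen equivalence --- indeed, the whole point of Rezk's paper is that the standard structure need not be known to be proper while an equivalent proper model always exists. Since the proposition claims properness of these specific standard model structures, that invocation proves nothing.

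What the paper actually extracts from Rezk is \cite[Theorem 9.1]{Rezk_every-proper}, a \emph{criterion} for properness of the standard structure, and it is applied only to $s\mathcal M$: because the coproduct of commutative monoids is the cartesian product, cobase change along free extensions amounts to taking a product with a fixed object, which preserves weak equivalences of simplicial sets. This criterion fails for $s\mathcal R$, where coproducts are tensor products and do not preserve weak equivalences without a flatness input --- which is precisely why the paper cites \cite[Lemma 3.1.2]{Schwede_spectra-cotangent} there, and handles $s\mathcal{AB}$ separately via Dold--Kan. Your first, non-expedient option (reduce to free extensions and analyze cobase change through the symmetric-power filtration) is in substance Schwede's lemma and is the correct strategy for $s\mathcal R$, but as written it is a one-sentence sketch of the hardest step of the proof; moreover it is ring-specific, since for $s\mathcal M$ no symmetric-power analysis is needed (coproduct equals product) and for $s\mathcal{AB}$ the Dold--Kan correspondence settles it. To repair the proposal, drop the appeal to Rezk's general theorem and either carry out the filtration argument for $s\mathcal R$ together with the easier case-by-case arguments for $s\mathcal M$ and $s\mathcal{AB}$, or cite the three case-specific results as the paper does.
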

We refer to these model structures as the \emph{standard} model structures
on these categories.
\begin{proof} The existence of these model structures is provided
  by~\cite[II.4 Theorem 4]{Quillen_homotopical}
  or~\cite[II.Corollary 5.6]{Goerss-J_simplicial}. Right properness
  is inherited from simplicial sets.  Since the cartesian product is
  the coproduct of commutative monoids, left properness of $s\mathcal
  M$ is a consequence of~\cite[Theorem 9.1]{Rezk_every-proper}. Left
  properness of $s\mathcal{AB}$ may for example be
  established using the Dold-Kan correspondence. For $s\mathcal R$,
  left properness is verified
  in~\cite[Lemma 3.1.2]{Schwede_spectra-cotangent}.  

  Applying the respective free functors from simplicial sets to the
  usual generating cofibrations and generating acyclic cofibrations
  for $\mathcal S$ shows that all three categories are cofibrantly
  generated. The argument given
  in~\cite[Appendix A]{Sagave-S_group-compl} can be adopted to show
  that $s\mathcal M$ and $s\mathcal R$ are cellular. Cellularity of
  $s\mathcal{AB}$ can be checked from the definition.
\end{proof}

\subsection{Group completion} For the rest of this section we focus on
the category of simplicial commutative monoids. This category is
pointed by the constant simplicial object on the one point
monoid. Hence $s\mathcal M$ is a pointed simplicial model category,
i.e., it is tensored, cotensored and enriched over the category of
pointed simplicial sets. The tensor with the pointed simplicial set
$S^1 = \Delta^1 / \partial \Delta^1$ is isomorphic to the bar
construction on a simplicial commutative monoid. It follows that the
functors $B(M) = M \otimes S^1$ and $\Omega(M)= M^{S^1}$ form a
Quillen adjunction $B \colon s\mathcal M \rightleftarrows s\mathcal M
\colon \Omega$ with respect to the standard model structure.

\begin{definition}
  A simplicial commutative monoid $M$ is \emph{grouplike} if the 
  commutative monoid $\pi_0(M)$ is a group.
\end{definition}

Forming the adjoint of the fibrant replacement $BM \to
(BM)^{\fib}$ of $BM$ in $s\mathcal M$ provides a natural
transformation
\begin{equation}\label{eq:Omega-B-functor}
\eta_M \colon M \to  \Omega((BM)^{\fib}). 
\end{equation}
It is immediate that $\Omega((BM)^{\fib})$ is always
grouplike. The map $\eta_M$ is known as the group completion of $M$.
Below we will compare it with two other ways of forming a group
completion.
\begin{lemma}\label{lem:grp-compl-equivalence-on-gplike}
If $M$ is grouplike, then $M \to  \Omega((BM)^{\fib})$ is a weak equivalence.
\end{lemma}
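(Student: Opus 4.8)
The plan is to exploit that, by Proposition~\ref{prop:model-str-on-sR-and-sM}, weak equivalences in $s\mathcal M$ are detected on underlying simplicial sets, so it suffices to prove that the underlying map of simplicial sets of $\eta_M$ is a weak equivalence. First I would unravel the target. Since the cotensor $\Omega = (-)^{S^1}$ is formed on underlying simplicial sets and $(BM)^{\fib}$ is fibrant, its underlying simplicial set is a Kan complex weakly equivalent to that of $BM$; hence $\Omega((BM)^{\fib})$ models the based loop space of the bar construction $BM$. Tracing the adjunction $B \colon s\mathcal M \rightleftarrows s\mathcal M \colon \Omega$, the map $\eta_M$ factors as the canonical comparison $M \to \Omega BM$ followed by $\Omega$ applied to the fibrant replacement, so I only have to analyze $M \to \Omega BM$ on homotopy groups.

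Next I would set up the relevant fibration sequence. Using that the coproduct of commutative monoids is computed on underlying sets by the cartesian product, the underlying simplicial set of $BM = M \otimes S^1$ is the diagonal of the bisimplicial set whose set of $(p,q)$-simplices is $(M_p)^{\times q}$, i.e.\ the usual bar construction applied levelwise. This construction fits into a one-sided bar resolution $EM$ whose underlying space is contractible (via an extra degeneracy) and which projects onto $BM$ with fiber $M$, providing a candidate fibration sequence $M \to EM \to BM$.

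The crux is to show that, on underlying simplicial sets, this projection is after realization a quasi-fibration, so that the long exact sequence in homotopy applies. In each bar degree the projection is a split surjection with fiber $M_p$, so the map of bisimplicial sets is a levelwise fibration; to pass to diagonals I would invoke a Bousfield--Friedlander type criterion, whose $\pi_*$-Kan hypothesis amounts precisely to the $\pi_0$-action being by invertible elements. This is exactly where the hypothesis enters: since $M$ is grouplike, $\pi_0(M)$ is a group, the criterion is met, and $M \to EM \to BM$ becomes a homotopy fibration sequence. I expect this quasi-fibration step to be the main obstacle, since it is the only place the grouplike assumption is genuinely used and the point at which the argument would fail for a general commutative monoid, whose $\Omega BM$ computes the group completion rather than $M$ itself.

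Finally, combining the contractibility of $EM$ with the homotopy fibration sequence yields natural isomorphisms $\pi_n(M) \cong \pi_{n+1}(BM) \cong \pi_n(\Omega((BM)^{\fib}))$ for all $n \geq 0$, the grouplike condition again ensuring that the $n=0$ statement is an isomorphism of groups. Checking that these identifications are induced by the comparison map shows that $\eta_M$ induces isomorphisms on all homotopy groups, hence is a weak equivalence on underlying simplicial sets, which by Proposition~\ref{prop:model-str-on-sR-and-sM} completes the proof.
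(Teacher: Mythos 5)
Your argument is essentially the paper's own proof: the paper likewise realizes $BM$ through the bisimplicial bar construction, forms the degreewise pullback square involving $E_\bullet M = B_\bullet(*,M,M)$, invokes the Bousfield--Friedlander theorem (with the grouplike hypothesis supplying exactly the $\pi_*$-Kan/$\pi_0$-fibration conditions, as you say) to obtain the homotopy fiber sequence $M \to EM \to BM$, and concludes from the contractibility of $EM$. One small correction: a split surjection need not be a Kan fibration, and the projection $M^q \times M \to M^q$ is one only when $M$ is fibrant as a simplicial set --- which is why the paper opens with ``we may assume that $M$ is fibrant''; alternatively, observe that the degreewise squares are homotopy cartesian without any fibrancy hypothesis, which is all the Bousfield--Friedlander theorem requires.
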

\begin{proof}
  We may assume that $M$ is fibrant. Writing $E_{\bullet}M =
  B_{\bullet}(*,M,M)$ for the bisimplicial set whose realization is
  the simplicial set $EM$, an application of the Bousfield--Friedlander
  theorem~\cite[Theorem B.4]{Bousfield-F_Gamma-bisimplicial} shows that the
  realization of the degree-wise pullback square
\[\xymatrix@-1pc{ \mathrm{const}\, M \ar[r] \ar[d] & E_{\bullet} M \ar[d] \\
  {*} \ar[r] & B_{\bullet} M }\] provides a homotopy fiber sequence $M
\to EM \to BM$. Since $EM$ is contractible, it follows that $M \to
\Omega((BM)^{\fib})$ is a weak equivalence.
\end{proof}
We now let $C$ be the free simplicial commutative monoid on a point,
i.e., the simplicial commutative monoid obtained applying the free
commutative monoid functor on sets degree-wise to $\Delta^0$. Then we apply $\Omega((B(-))^{\fib})$
to form the group completion of $C$ and choose a factorization
\begin{equation}\label{eq:definition-xi}
  \xymatrix{C \ar@{>->}[r]^{\xi} & C' \ar@{->>}[r]^-{\sim} &   
    \Omega((BC)^{\fib})}
\end{equation}
of $\eta_C$ into a cofibration $\xi$ followed
by an acyclic fibration.
\begin{lemma}\label{lem:bar-con-group-compl}
The map $B\xi \colon BC \to BC'$ is a weak equivalence.
\end{lemma}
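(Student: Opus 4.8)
The plan is to reduce the statement to the assertion that the derived bar construction inverts the group completion map, and then to deduce the latter from Lemma~\ref{lem:grp-compl-equivalence-on-gplike} by a formal adjunction argument. Since $C$ is the free simplicial commutative monoid on the cofibrant object $\Delta^0$ it is cofibrant, and since $\xi$ is a cofibration so is $C'$. As $B = (-)\otimes S^1$ is a left Quillen functor, it takes weak equivalences between cofibrant objects to weak equivalences, so $B\xi$ computes the derived functor $\mathbb{L}B$ on $\xi$. The factorization~\eqref{eq:definition-xi} writes $\eta_C$ as $p\circ\xi$ with $p\colon C'\to\Omega((BC)^{\fib})$ an acyclic fibration; applying $\mathbb{L}B$ and working in $\Ho(s\mathcal M)$, the map $\mathbb{L}B(p)$ is an isomorphism, so by two-out-of-three it suffices to prove that $\mathbb{L}B(\eta_C)$ is an isomorphism in $\Ho(s\mathcal M)$.

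First I would record that the Quillen adjunction $B\colon s\mathcal M\rightleftarrows s\mathcal M\colon\Omega$ induces a derived adjunction $(\mathbb{L}B,\mathbb{R}\Omega)$ on $\Ho(s\mathcal M)$ whose unit at the cofibrant object $C$ is exactly the group completion map $\eta_C$ of~\eqref{eq:Omega-B-functor}. The first triangle identity then reads $\varepsilon_{BC}\circ\mathbb{L}B(\eta_C)=\id_{BC}$, where $\varepsilon$ denotes the counit; hence $\mathbb{L}B(\eta_C)$ is a section of $\varepsilon_{BC}$, and it is enough to show that $\varepsilon_Y$ is an isomorphism whenever $Y$ is connected, since $Y=\mathbb{L}B(C)=BC$ satisfies $\pi_0(BC)=\ast$.

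To prove the counit is an isomorphism on connected objects, I would fix a connected $Y$. Then $\mathbb{R}\Omega Y$ is grouplike, so by Lemma~\ref{lem:grp-compl-equivalence-on-gplike} the unit $u_{\mathbb{R}\Omega Y}$ is an isomorphism; the second triangle identity $\mathbb{R}\Omega(\varepsilon_Y)\circ u_{\mathbb{R}\Omega Y}=\id$ then forces $\mathbb{R}\Omega(\varepsilon_Y)$ to be an isomorphism. Both $Y$ and $\mathbb{L}B\mathbb{R}\Omega Y$ are connected (the latter is a bar construction), and on the full subcategory of connected objects $\mathbb{R}\Omega$ reflects weak equivalences, because the underlying loop space satisfies $\pi_n(\mathbb{R}\Omega X)\cong\pi_{n+1}(X)$ for all $n\ge 0$ while $\pi_0$ is trivial. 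Therefore $\varepsilon_Y$ is a weak equivalence, which applied to $Y=BC$ completes the reduction and hence the proof.

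The step I expect to be the main obstacle is the bookkeeping needed to identify the derived unit of $(\mathbb{L}B,\mathbb{R}\Omega)$ with the concrete group completion map $\eta_C$ and to set up the triangle identities at the level of $\Ho(s\mathcal M)$; once this is in place, together with the connectivity of bar constructions and the fact that $\mathbb{R}\Omega$ reflects equivalences between connected objects, everything follows formally from Lemma~\ref{lem:grp-compl-equivalence-on-gplike}. An alternative, more hands-on route would replay the Bousfield--Friedlander fiber-sequence argument of Lemma~\ref{lem:grp-compl-equivalence-on-gplike} to show directly that the counit $B\Omega((BC)^{\fib})\to (BC)^{\fib}$ is a weak equivalence.
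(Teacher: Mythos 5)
Your proof is correct and takes essentially the same route as the paper's: both reduce $B\xi$ to $B\eta_C$ via the factorization~\eqref{eq:definition-xi}, then exploit the triangle identities of the $(B,\Omega)$-adjunction together with Lemma~\ref{lem:grp-compl-equivalence-on-gplike} applied to the grouplike loop object, and finally use that looping detects weak equivalences between connected objects. The only difference is presentational: you work with the derived adjunction $(\mathbb{L}B,\mathbb{R}\Omega)$ on $\Ho(s\mathcal M)$, whereas the paper unwinds the same identities at the level of explicit maps and fibrant replacements $\Omega((-)^{\fib})$.
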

\begin{proof}
  Since $BC$ and $BC'$ are connected as simplicial sets, it is enough
  to show that $\Omega((B\xi)^{\fib})$ is a weak
  equivalence. By construction of $\xi$, this reduces to showing that
  $\Omega((B\eta_C)^{\fib})$ is a weak equivalence. The
  composite of $B\eta_C$ with the
  adjunction counit $\varepsilon_D \colon B\Omega D \to D$ on $D =
  (BC)^{\fib}$ is the fibrant replacement of $BC$. Hence it is
  enough to show that $\varepsilon_D$ becomes a weak equivalence after
  applying $\Omega((-)^{\fib})$. The composite of
  $\Omega((\varepsilon_D)^{\fib})$ with the group completion
  map $\eta_{\Omega D}$ is the weak equivalence $\Omega(D \to
  D^{\fib})$. Hence it is enough to see that $\eta_{\Omega D}$
  is the weak equivalence, and this follows from the last lemma.
\end{proof}
The next lemma shows that we may view $\xi\colon C \to C'$ as the
group completion in the universal example. To phrase it, recall that
an object $X$ in a simplicial model category $\mathcal C$ is local
with respect to a cofibration $U \to V$ in $\mathcal C$ if $X$ is
fibrant and the induced map of simplicial sets $\bfHom(V,X) \to
\bfHom(U,X)$ is an acyclic fibration.
\begin{lemma}\label{lem:char-local}
An object in $s\mathcal M$ is $\xi$-local if and only if it is
fibrant and grouplike. 
\end{lemma}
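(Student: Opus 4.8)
The plan is to prove the two implications separately, after isolating the fibration part from the weak-equivalence part. First I would record two preliminary observations. Since $\xi\colon C\to C'$ is a cofibration and a local object is by definition fibrant, for any fibrant $X$ the restriction map $\bfHom(C',X)\to\bfHom(C,X)$ is automatically a fibration of simplicial sets by the pushout-product axiom of the simplicial model structure on $s\mathcal M$. Thus $X$ is $\xi$-local if and only if $X$ is fibrant and this map is a weak equivalence, so only the latter condition needs to be analysed. The second observation is a computation: because $C$ is the free simplicial commutative monoid on a point, the free--forgetful adjunction identifies $\bfHom(C,X)$ with the underlying simplicial set of $X$, so that $\pi_0\bfHom(C,X)$ is the monoid $\pi_0 X$; and since $C'\xrightarrow{\sim}\Omega((BC)^{\fib})$ with $\pi_0 C=\IN$, one finds $\pi_0 C'\cong\IZ$ and that $\pi_0\xi\colon\IN\to\IZ$ is the group completion.

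For the ``only if'' direction, assume $X$ is $\xi$-local; then $X$ is fibrant and the map $\pi_0\bfHom(C',X)\to\pi_0\bfHom(C,X)=\pi_0 X$ is a bijection, in particular surjective. Any class on the left is represented by a map $C'\to X$, which induces a monoid homomorphism $\pi_0 C'=\IZ\to\pi_0 X$; since $\IZ$ is a group, the image of the generator is invertible in $\pi_0 X$, and restricting along $\xi$ identifies this image with the corresponding element of $\pi_0 X$. Surjectivity then forces every element of $\pi_0 X$ to be invertible, so $\pi_0 X$ is a group and $X$ is grouplike.

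For the ``if'' direction, assume $X$ is fibrant and grouplike. By Lemma~\ref{lem:grp-compl-equivalence-on-gplike} the group-completion map $X\to\Omega((BX)^{\fib})$ is a weak equivalence of fibrant objects, so for the cofibrant objects $C$ and $C'$ it induces weak equivalences $\bfHom(C,X)\simeq\bfHom(C,\Omega((BX)^{\fib}))$ and $\bfHom(C',X)\simeq\bfHom(C',\Omega((BX)^{\fib}))$ compatible with restriction along $\xi$. The adjunction between $B=(-)\otimes S^1$ and $\Omega=(-)^{S^1}$ then rewrites the right-hand map as $\bfHom(BC',(BX)^{\fib})\to\bfHom(BC,(BX)^{\fib})$, induced by $B\xi$. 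As $BC$ and $BC'$ are cofibrant, $(BX)^{\fib}$ is fibrant, and $B\xi$ is a weak equivalence by Lemma~\ref{lem:bar-con-group-compl}, this map is a weak equivalence; hence so is the original restriction map, which is therefore an acyclic fibration, and $X$ is $\xi$-local.

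I expect the main obstacle to be the bookkeeping in the ``if'' direction: one must check that, after passing through the weak equivalence $X\simeq\Omega((BX)^{\fib})$ and the $B\dashv\Omega$ adjunction, the induced map on mapping spaces is genuinely the one induced by $B\xi$, which is a naturality verification rather than a computation. Keeping careful track of which objects are cofibrant and which are fibrant, so that each invocation of homotopy invariance of the simplicial mapping spaces is legitimate, is the other point where care is required.
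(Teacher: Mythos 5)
Your proof is correct and follows essentially the same route as the paper's: the ``only if'' direction extracts grouplikeness from surjectivity on components via the group completion $\pi_0\xi\colon\mathbb{N}\to\mathbb{Z}$, and the ``if'' direction reduces along the weak equivalence $X\to\Omega((BX)^{\fib})$ of Lemma~\ref{lem:grp-compl-equivalence-on-gplike}, applies the $B\dashv\Omega$ adjunction, and invokes Lemma~\ref{lem:bar-con-group-compl}. The only (harmless) differences are cosmetic: you check surjectivity on $\pi_0$ of the mapping spaces where the paper checks it on $0$-simplices, and you spell out the SM7 bookkeeping (fibration plus weak equivalence gives acyclic fibration) that the paper leaves implicit.
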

\begin{proof}
Let $M$ be $\xi$-local. Then 
\[ s\mathcal M(C',M) \cong \bfHom(C',M)_0 \to
\bfHom(C,M)_0\cong s\mathcal M(C,M)\] is surjective. Hence every
map $C \to M$ extends over $C'$. Passing to connected components, this
means that any homomorphism $(\mathbb N,+)\cong \pi_0(C) \to \pi_0(M)$ extends
over the group completion $(\mathbb N,+) \to (\mathbb Z,+)$. This
implies that $M$ is grouplike.

Now let $M$ be fibrant as a simplicial set and grouplike. Since $M \to
\Omega((BM)^{\fib})$ is a weak equivalence by
Lemma~\ref{lem:grp-compl-equivalence-on-gplike}, it is enough to show
that \[\bfHom(C',\Omega((BM)^{\fib})) \to
\bfHom(C,\Omega((BM)^{\fib}))\] is a weak equivalence. By adjunction, this
map is isomorphic to 
 \[\bfHom(BC',(BM)^{\fib}) \to
\bfHom(BC,(BM)^{\fib}),\]
and the claim follows from Lemma~\ref{lem:bar-con-group-compl} and the 
fact that $s\mathcal M$ is simplicial.
\end{proof}
The previous lemma enables us to view the group completion of
simplicial commutative monoids as a fibrant replacement in an
appropriate model structure:
\begin{proposition}\label{prop:group-completion-model-str}
  The category of  simplicial commutative monoids $s\mathcal M$ admits
  a left proper simplicial cellular \emph{group
    completion model structure}. 

  The cofibrations in this model structure are the same as in the
  standard model structure. A map $M \to N$ is a weak equivalence if
  and only if the induced map $BM \to BN$ is a weak equivalence of simplicial
  sets.  An object is fibrant if and only if it is both fibrant as a
  simplicial set and grouplike.

  The fibrant replacement $\xymatrix@1{M \; \ar@{>->}[r]&\; M^{\gp}}$ in the group
  completion model structure is weakly equivalent to $\eta_M$.
\end{proposition}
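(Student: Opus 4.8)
The plan is to construct the group completion model structure as a left Bousfield localization of the standard model structure on $s\mathcal{M}$ at the single cofibration $\xi \colon C \to C'$, and then to identify the localized weak equivalences and fibrant objects explicitly using the lemmas already proved.

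First I would invoke the general existence theorem for left Bousfield localizations of cellular (and left proper) model categories—for example Hirschhorn's theorem—which applies because $s\mathcal{M}$ is left proper, cellular, and simplicial by Proposition~\ref{prop:model-str-on-sR-and-sM}. Localizing at the set $\{\xi\}$ then produces a new model structure, the $\xi$-local model structure, which automatically is again left proper, simplicial, and cellular, and whose cofibrations agree with the standard ones. Its fibrant objects are by definition the $\xi$-local objects, which by Lemma~\ref{lem:char-local} are exactly the objects that are fibrant as simplicial sets and grouplike; this gives the stated description of fibrant objects immediately.

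Next I would identify the weak equivalences. In a left Bousfield localization the weak equivalences are the $\xi$-local equivalences, i.e.\ maps $M \to N$ such that $\bfHom(N, W) \to \bfHom(M, W)$ is a weak equivalence for every $\xi$-local $W$. The claim is that these coincide with the maps inducing a weak equivalence $BM \to BN$. To prove this I would use the adjunction $(B,\Omega)$ together with Lemma~\ref{lem:grp-compl-equivalence-on-gplike} and Lemma~\ref{lem:char-local}: testing against $\xi$-local $W$ is, after writing $W \simeq \Omega((BW)^{\fib})$ and applying adjunction, the same as testing the map $BM \to BN$ against fibrant objects of the form $(BW)^{\fib}$. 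Since $B$ and $\Omega$ form a Quillen adjunction and the target objects $BW$ range over all connected grouplike-delooped spaces, a map is a $\xi$-local equivalence precisely when $BM \to BN$ is an ordinary weak equivalence of simplicial sets. Lemma~\ref{lem:bar-con-group-compl} is exactly the compatibility statement one needs to ensure $\xi$ itself is detected correctly.

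Finally, for the last assertion I would compare the fibrant replacement $M \rightarrowtail M^{\gp}$ in the localized structure with the map $\eta_M \colon M \to \Omega((BM)^{\fib})$. The fibrant replacement is a $\xi$-local equivalence from $M$ into a fibrant (hence grouplike) object; since $\eta_M$ also lands in a grouplike object and induces a $BM$-equivalence (indeed $B\Omega((BM)^{\fib}) \to (BM)^{\fib}$ is the counit, which is a weak equivalence on connected objects), both are $\xi$-local equivalences to $\xi$-local objects, and by the universal property of localization they are weakly equivalent under $M$. \emph{The main obstacle} I anticipate is the second step: verifying cleanly that the abstract $\xi$-local equivalences coincide with the $B$-equivalences, since this requires carefully threading the $(B,\Omega)$ adjunction through the $\bfHom$-spaces and using that every grouplike fibrant object arises, up to equivalence, as a loop space of its own delooping. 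The existence of the localization and the identification of fibrant objects are comparatively formal given the preceding lemmas.
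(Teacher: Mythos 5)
Your overall strategy --- localize the standard model structure on $s\mathcal M$ at the single cofibration $\xi$ via Hirschhorn's existence theorem and read off the fibrant objects from Lemma~\ref{lem:char-local} --- is exactly the paper's, and that part of your argument is complete. The divergence, and the one real gap, lies in your second step. The implication ``$BM \to BN$ a weak equivalence $\Rightarrow$ $\xi$-local equivalence'' does follow by threading the $(B,\Omega)$-adjunction through the mapping spaces as you describe. But the converse is not established by what you wrote: after adjunction you only know that $\bfHom_{s\mathcal M}(BN,(BW)^{\fib}) \to \bfHom_{s\mathcal M}(BM,(BW)^{\fib})$ is a weak equivalence for every $\xi$-local $W$, and you must still show that this restricted class of test objects detects weak equivalences between the connected objects $BM$ and $BN$. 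That requires two further inputs which you only gesture at: first, that the counit $B\Omega D \to D$ is a weak equivalence for connected fibrant $D$ (so that taking $W = \Omega((BN)^{\fib})$ makes the test object weakly equivalent to $(BN)^{\fib}$ itself), and second, a Yoneda-type argument in $\Ho(s\mathcal M)$ that manufactures a two-sided homotopy inverse to $BM \to BN$ out of the resulting bijections on $\pi_0$ of these mapping spaces. You name this as the ``main obstacle'' but do not resolve it, so as written the characterization of the weak equivalences --- on which your third step then relies --- is incomplete.

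The paper avoids this obstacle by reversing the order of your second and third steps and replacing the direct mapping-space analysis with the universal property of the localization. Since $B\xi$ is a weak equivalence (Lemma~\ref{lem:bar-con-group-compl}), Hirschhorn's Proposition 3.3.18 shows that $(B,\Omega)$ remains a Quillen pair when the source carries the $\xi$-local structure; hence $B$, being left Quillen for that structure, takes the localized acyclic cofibration $M \rightarrowtail M^{\gp}$ to a weak equivalence. Feeding this into the square comparing $M \to M^{\gp}$ with $\eta_M \colon M \to \Omega((BM)^{\fib})$ (whose bottom map is a weak equivalence by Lemma~\ref{lem:grp-compl-equivalence-on-gplike}) gives the fibrant-replacement statement first; then Hirschhorn's Theorem 3.2.18 (local equivalences between local objects are precisely the weak equivalences) yields the characterization of weak equivalences: $M \to N$ is a local equivalence iff $\Omega((BM)^{\fib}) \to \Omega((BN)^{\fib})$ is a weak equivalence, iff $BM \to BN$ is one, using that $BM$ and $BN$ are connected. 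If you want to keep your ordering, supplying the counit equivalence and the Yoneda argument above will complete your proof; otherwise the paper's route through Proposition 3.3.18 is the cleaner repair, as it never needs the class of test objects to detect equivalences.
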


\begin{proof}
  The desired model structure is defined as the left Bousfield
  localization of the standard model structure with respect to the
  single map~$\xi$. The existence of this model structure, the
  characterization of the cofibrations, and the fact that it is left
  proper, simplicial, and cellular follow
  from~\cite[Theorem
    4.1.1]{Hirschhorn_model}. Lemma~\ref{lem:char-local} provides the description of the
  fibrant objects.

  Now let $M \to M^{\gp}$ be a fibrant replacement in the group
  completion model structure and consider the square
  \[\xymatrix@-1pc{
    M \ar[d] \ar[r] & \Omega((B(M))^{\fib})\ar[d] \\
    M^{\gp}\ar[r] &
    \Omega((B(M^{\gp}))^{\fib}). }\] The bottom
  horizontal map is a weak equivalence by
  Lemma~\ref{lem:grp-compl-equivalence-on-gplike}. Using the universal
  property of the left Bousfield
  localization~\cite[Proposition 3.3.18]{Hirschhorn_model} and
  Lemma~\ref{lem:bar-con-group-compl}, it follows that $B(M) \to
  B(M^{\gp})$ is a weak equivalence. Hence the right vertical
  map is a weak equivalence. This provides the desired
  characterization of the fibrant replacement.

  By the previous argument and~\cite[Theorem
  3.2.18]{Hirschhorn_model}, a map $M \to N$ is a weak equivalence in
  the group completion model structure if and only if it becomes a
  weak equivalence when applying $\Omega((B(-))^{\fib})$. This is the
  case if and only if $B(M) \to B(N)$ is a weak equivalence.
\end{proof}
\begin{remark}
  Simplicial commutative monoids have a topological analog known as
  \emph{commutative $\mathcal I$-space monoids}. These provide
  strictly commutative models for the more common \emph{$E_{\infty}$
    spaces}. The group completion model structure of the previous
  proposition is analogous to the group completion model structure for
  commutative $\mathcal I$-space monoids developed
  in~\cite[Theorem 1.3]{Sagave-S_group-compl}. The proofs of
  Proposition~\ref{prop:model-str-on-sR-and-sM} and of
  Lemmas~\ref{lem:grp-compl-equivalence-on-gplike}
  and~\ref{lem:char-local} closely follow the corresponding statements
  in~\cite[\S 5]{Sagave-S_group-compl}.
\end{remark}
The fact that the fibrant replacement $M \to M^{\gp}$ is always a 
cofibration easily provides the following universal property: 
\begin{corollary}
Every map $M \to N$ of simplicial commutative monoids with $N$ fibrant
and grouplike extends over the group completion $\xymatrix@1{M \; \ar@{>->}[r]&\; M^{\gp}}$. \qed
\end{corollary}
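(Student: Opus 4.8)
The plan is to recognize the assertion as a direct instance of the lifting axiom for the group completion model structure of Proposition~\ref{prop:group-completion-model-str}. Given a map $M \to N$ with $N$ fibrant as a simplicial set and grouplike, I want to produce a dotted arrow making
\[\xymatrix@-1pc{ M \ar[r] \ar@{>->}[d] & N \ar[d] \\ M^{\gp} \ar@{-->}[ur] & {*} }\]
commute, where the left vertical map is the fibrant replacement of $M$ in the group completion model structure and the right vertical map is the map to the terminal object.

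First I would note that, by the very definition of a fibrant replacement, the map $\xymatrix@1{M \ar@{>->}[r] & M^{\gp}}$ is an acyclic cofibration in the group completion model structure, since it is the acyclic cofibration occurring in a factorization of $M \to {*}$ into an acyclic cofibration followed by a fibration. Next, I would invoke the characterization of fibrant objects supplied by Proposition~\ref{prop:group-completion-model-str}, or equivalently by Lemma~\ref{lem:char-local}: an object of $s\mathcal M$ is fibrant in the group completion model structure precisely when it is fibrant as a simplicial set and grouplike. Hence the hypotheses on $N$ say exactly that $N \to {*}$ is a fibration in this model structure.

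With these two observations in hand, the existence of the lift $M^{\gp} \to N$ follows immediately from the defining property that acyclic cofibrations have the left lifting property against fibrations. The resulting map restricts along $\xymatrix@1{M \ar@{>->}[r] & M^{\gp}}$ to the original map $M \to N$, which is the desired extension. There is essentially no obstacle here; the only point requiring any care is to correctly match the stated fibrancy hypothesis on $N$ with fibrancy in the localized model structure, and this matching is exactly the content of Lemma~\ref{lem:char-local}.
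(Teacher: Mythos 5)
Your proof is correct and matches the paper's intended argument exactly: the paper's one-line justification (``the fact that the fibrant replacement $M \to M^{\gp}$ is always a cofibration'') is precisely your observation that $M \rightarrowtail M^{\gp}$ is an acyclic cofibration in the group completion model structure, combined with the characterization of fibrant objects from Proposition~\ref{prop:group-completion-model-str} and the lifting property against the fibration $N \to {*}$.
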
\begin{remark}
The example discussed in~\cite[\S 5.7]{Bousfield-F_Gamma-bisimplicial} shows 
that the group completion model structure is not right proper.
\end{remark}
A different way of group completing  a simplicial
commutative monoid is to apply the usual group completion of
commutative monoids degree-wise. As a functor 
\begin{equation}
 (-)^{\textrm{deg-gp}}\colon s\mathcal M \to s\mathcal{AB},  
\end{equation}
this construction is left adjoint to the forgetful functor. The
resulting natural transformation $M \to M^{\textrm{deg-gp}}$ is indeed
equivalent to the group completion considered above:
\begin{lemma}\label{lem:deg-gp-is-fib-rep}
The map $M \to M^{\textrm{deg-gp}}$ is a weak equivalence
with fibrant codomain with respect to the group completion model structure. 
\end{lemma}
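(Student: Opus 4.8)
The plan is to check the two assertions of the statement in turn. Fibrancy of $M^{\textrm{deg-gp}}$ is immediate from Proposition~\ref{prop:group-completion-model-str}: as an object of $s\mathcal{AB}$ its underlying simplicial set is a simplicial abelian group, hence a Kan complex, and $\pi_0(M^{\textrm{deg-gp}})$ is an abelian group and in particular a group, so $M^{\textrm{deg-gp}}$ is both fibrant as a simplicial set and grouplike. By the same proposition, proving that $M \to M^{\textrm{deg-gp}}$ is a weak equivalence in the group completion model structure amounts to proving that $BM \to B(M^{\textrm{deg-gp}})$ is a weak equivalence of simplicial sets.

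Since both $BM$ and $B(M^{\textrm{deg-gp}})$ are connected, I would argue as in the proof of Lemma~\ref{lem:bar-con-group-compl}: looping is conservative on connected spaces, so it is enough to see that $\Omega((BM)^{\fib}) \to \Omega((B(M^{\textrm{deg-gp}}))^{\fib})$ is a weak equivalence. As $M^{\textrm{deg-gp}}$ is grouplike, Lemma~\ref{lem:grp-compl-equivalence-on-gplike} identifies the target with $M^{\textrm{deg-gp}}$ via $\eta_{M^{\textrm{deg-gp}}}$, and by naturality of $\eta$ the map in question, precomposed with $\eta_M$, equals $\eta_{M^{\textrm{deg-gp}}}$ precomposed with $M \to M^{\textrm{deg-gp}}$. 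Thus the real content of the lemma is that the group completion $\eta_M$ realizes the degree-wise group completion on the level of homotopy, i.e.\ a group completion theorem in the present setting.

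The hard part is precisely this identification, and I would approach it by reducing to the free generator $C$. The bar construction is homotopical, since finite products of simplicial sets preserve weak equivalences, so one may restrict to cofibrant $M$ and then propagate the claim along the generating cofibrations $C \otimes (\partial\Delta^n \to \Delta^n)$ out of which every cofibrant object is built, using that $B(-)$ and $(-)^{\textrm{deg-gp}}$ both preserve the relevant colimits. This localizes everything at the case $M = C$, the constant simplicial commutative monoid on $\mathbb{N}$, for which $C^{\textrm{deg-gp}}$ is the constant simplicial abelian group on $\mathbb{Z}$. Here Lemma~\ref{lem:bar-con-group-compl} already gives $BC \simeq BC'$ for the group completion cofibration $\xi\colon C \to C'$, so $\Omega((BC)^{\fib})$ is modeled by the grouplike object $C'$, and what remains is to show that $C'$ is weakly equivalent to the discrete $\mathbb{Z}$. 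On $\pi_0$ this is just the group completion $\mathbb{N}^{\gp}=\mathbb{Z}$, so the substantive assertion is that $B\mathbb{N}$ carries no higher homotopy, i.e.\ $B\mathbb{N}\simeq S^1$. This is the classical computation of the classifying space of the cancellative commutative monoid $\mathbb{N}$, and I expect it to be the only genuinely nonformal input; everything else is bookkeeping with the model structures established above.
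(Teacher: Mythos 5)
Your fibrancy argument and your reduction to showing that $BM \to B(M^{\textrm{deg-gp}})$ is a weak equivalence of simplicial sets agree with the paper. But at exactly that point the paper does something much shorter: it invokes Quillen's group completion theorem, \cite[Propositions Q1 and Q2]{Friedlander-M_filtrations}, which applies to an \emph{arbitrary} simplicial commutative monoid and directly gives the weak equivalence. Your attempt to replace this citation by a cell induction starting from $M=C$ contains genuine gaps, and they are not bookkeeping.

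First, the reduction to cofibrant $M$ is circular: to pass from a cofibrant replacement $M^c \xrightarrow{\sim} M$ back to $M$ you need $B((M^c)^{\textrm{deg-gp}}) \to B(M^{\textrm{deg-gp}})$ to be a weak equivalence, i.e.\ homotopy invariance of degree-wise group completion on not-necessarily-cofibrant objects; but that is a \emph{consequence} of the present lemma (it is precisely how the lemma is used in the proof of Corollary~\ref{cor:hocart-sAb-sM}), not something available beforehand. Second, the induction does not ``localize everything at $M=C$''. The generating cofibrations are $C\otimes(\partial\Delta^n \to \Delta^n)$, so the gluing step for each cell attachment also requires the statement for $C\otimes\partial\Delta^n$, the free simplicial commutative monoid on a simplicial $(n-1)$-sphere. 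Its degree-wise group completion is the free simplicial abelian group on $\partial\Delta^n$, so what is needed is that $B$ of (a disjoint union of symmetric powers of) $S^{n-1}$ maps by a weak equivalence to $B$ of the free simplicial abelian group on $S^{n-1}$; this is Dold--Thom/group-completion content for free objects, not the elementary computation $B\mathbb{N}\simeq S^1$, and it does not follow from the case of a point by formal colimit arguments. Third, even granting these base cases, your gluing step compares a homotopy pushout in $s\mathcal M$ (the $B$-side) with $B$ applied to a pushout in $s\mathcal{AB}$; to know the latter is again a homotopy pushout in $s\mathcal M$ you need something like Corollary~\ref{cor:hocart-sAb-sM} --- the relevant leg $B(\mathbb{Z}\{\partial\Delta^n\})\to B(\mathbb{Z}\{\Delta^n\})$ is \emph{not} a cofibration in $s\mathcal M$, since cofibrations there only adjoin degreewise free summands while this map adjoins invertible elements --- and that corollary is itself deduced from the present lemma, so this is circular as well. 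In short, the ``only genuinely nonformal input'' is not $B\mathbb{N}\simeq S^1$ but the full group completion theorem for simplicial commutative monoids, which is exactly what the paper imports from \cite{Friedlander-M_filtrations}; your outline does not reprove it.
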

\begin{proof}
  Since simplicial abelian groups are fibrant as simplicial sets, it
  follows that $M^{\textrm{deg-gp}}$ is fibrant in the group
  completion model structure. Quillen's analysis of
  $M^{\textrm{deg-gp}}$
  in~\cite[Propositions Q1 and Q2]{Friedlander-M_filtrations} implies
  that $BM \to B(M^{\textrm{deg-gp}})$ is a weak equivalence of
  simplicial sets.
\end{proof}
\begin{corollary}
The degree-wise group completion is the left adjoint in a Quillen equivalence
between the category of simplicial commutative monoids with the group completion 
model structure and the category of simplicial abelian groups. 
\end{corollary}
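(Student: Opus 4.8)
The plan is to realize $((-)^{\textrm{deg-gp}}, U)$, where $U$ denotes the forgetful functor, as a Quillen adjunction between $s\mathcal M$ with the group completion model structure and $s\mathcal{AB}$ with its standard model structure, and then to promote it to a Quillen equivalence by means of Lemma~\ref{lem:deg-gp-is-fib-rep}. I would check that $(-)^{\textrm{deg-gp}}$ is a left Quillen functor. Since fibrations and weak equivalences in both standard model structures are detected on underlying simplicial sets and $U$ leaves the underlying simplicial set unchanged, $U$ is right Quillen for the two standard structures; hence $(-)^{\textrm{deg-gp}}$ is left Quillen for the standard structures and in particular preserves cofibrations. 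Because the cofibrations of the group completion model structure coincide with the standard ones (Proposition~\ref{prop:group-completion-model-str}), $(-)^{\textrm{deg-gp}}$ continues to preserve cofibrations when $s\mathcal M$ carries the group completion model structure.

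The crux is that $(-)^{\textrm{deg-gp}}$ also preserves acyclic cofibrations. Given an acyclic cofibration $M \to N$ in the group completion model structure, naturality of the unit yields a commutative square whose vertical maps $M \to M^{\textrm{deg-gp}}$ and $N \to N^{\textrm{deg-gp}}$ are group completion weak equivalences by Lemma~\ref{lem:deg-gp-is-fib-rep}; two-out-of-three then shows that $M^{\textrm{deg-gp}} \to N^{\textrm{deg-gp}}$ is a group completion weak equivalence. Now every simplicial abelian group is grouplike and fibrant as a simplicial set, hence a local object of the group completion model structure. A weak equivalence of the localized structure between local objects is an ordinary weak equivalence by the general theory of left Bousfield localization \cite{Hirschhorn_model}, so $M^{\textrm{deg-gp}} \to N^{\textrm{deg-gp}}$ is a weak equivalence in $s\mathcal{AB}$; being a cofibration as well, it is an acyclic cofibration. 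This shows the adjunction is Quillen.

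To conclude that it is a Quillen equivalence I would invoke the standard criterion \cite{Hirschhorn_model}: it suffices that $U$ reflects weak equivalences between fibrant objects and that the derived unit is a weak equivalence on cofibrant objects. For the first point, all objects of $s\mathcal{AB}$ are fibrant, and if $U(f)$ is a group completion weak equivalence then, by the same local-objects argument, it is an ordinary weak equivalence, so $f$ is a weak equivalence of simplicial abelian groups. For the second point, $M^{\textrm{deg-gp}}$ is already fibrant in $s\mathcal{AB}$, so no fibrant replacement is needed and the derived unit on a cofibrant $M$ is the map $M \to M^{\textrm{deg-gp}}$, which is a weak equivalence by Lemma~\ref{lem:deg-gp-is-fib-rep}. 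The main obstacle throughout is the single localization-theoretic input that group completion weak equivalences between simplicial abelian groups are ordinary weak equivalences; once this is granted, both halves of the criterion follow formally from Lemma~\ref{lem:deg-gp-is-fib-rep}.
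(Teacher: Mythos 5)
Your proof is correct. It uses the same two essential ingredients as the paper --- Lemma~\ref{lem:deg-gp-is-fib-rep} and the standard criterion that a Quillen adjunction is a Quillen equivalence once the right adjoint reflects weak equivalences between fibrant objects and the derived unit is a weak equivalence on cofibrant objects --- but you verify the Quillen adjunction from the left-adjoint side, whereas the paper works on the right-adjoint side. The paper observes that $U$ sends every simplicial abelian group to a fibrant object of the group completion model structure (it is fibrant as a simplicial set and grouplike) and then invokes \cite[Proposition 3.3.16]{Hirschhorn_model} to conclude that $U$ preserves fibrations; since $U$ also preserves weak equivalences, this makes $U$ right Quillen in two sentences. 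You instead prove directly that $(-)^{\textrm{deg-gp}}$ preserves acyclic cofibrations, via the naturality square of the unit, two-out-of-three for group completion weak equivalences, and the fact that an $S$-local equivalence between $S$-local objects is an ordinary weak equivalence. Both routes ultimately rest on that same locality fact --- the paper uses it for the reflection statement, you use it twice --- so the difference is one of packaging: your argument is slightly longer but more self-contained, replacing the appeal to Hirschhorn's result on fibrations in localizations by an elementary two-out-of-three argument with the unit maps supplied by Lemma~\ref{lem:deg-gp-is-fib-rep}, while the paper's citation-based verification is shorter given the quoted result.
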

\begin{proof}
  Since all objects in $s\mathcal{AB}$ are fibrant as simplicial sets
  and grouplike, the forgetful functor $U\colon s\mathcal{AB}\to
  s\mathcal M$  preserves fibrant objects. Hence it follows
  from~\cite[Proposition 3.3.16]{Hirschhorn_model} that $U$ preserves
  fibrations. Since weak equivalences between fibrant objects in the
  group completion model structure are precisely the underlying weak
  equivalences of simplicial sets, it follows that a map $f$ in
  $s\mathcal{AB}$ is a weak equivalence if $U(f)$ is. Hence $U$ is a
  right Quillen functor. Together with the previous lemma, this
  implies that $((-)^{\textrm{deg-gp}}, U)$ is a Quillen equivalence.
\end{proof}
\begin{corollary}
  The homotopy category of simplicial abelian groups is equivalent to
  the homotopy category of grouplike simplicial commutative
  monoids.\qed
\end{corollary}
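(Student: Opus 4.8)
The plan is to deduce this directly from the preceding corollary. That corollary exhibits the degree-wise group completion as the left adjoint of a Quillen equivalence between $s\mathcal M$ with the group completion model structure and $s\mathcal{AB}$ with its standard model structure. Since a Quillen equivalence induces an equivalence of homotopy categories, we immediately obtain $\Ho_{\gp}(s\mathcal M)\simeq \Ho(s\mathcal{AB})$, where I write $\Ho_{\gp}(s\mathcal M)$ for the homotopy category of $s\mathcal M$ equipped with the group completion model structure and $\Ho(s\mathcal{AB})$ is by definition the homotopy category of simplicial abelian groups. The only remaining task is therefore to identify $\Ho_{\gp}(s\mathcal M)$ with the homotopy category of grouplike simplicial commutative monoids, i.e.\ with the localization of the full subcategory of grouplike objects of $s\mathcal M$ at the underlying weak equivalences of simplicial sets.

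To carry out this identification, I would first record that being grouplike is invariant under standard weak equivalences, since such a map induces an isomorphism on $\pi_0$, so that a standard fibrant replacement of a grouplike monoid is again grouplike. By Proposition~\ref{prop:group-completion-model-str} the fibrant objects of the group completion model structure are exactly the simplicial commutative monoids that are both fibrant as simplicial sets and grouplike. The crucial point is that on these objects the two notions of weak equivalence coincide: for fibrant grouplike $M$ and $N$ and a map $f\colon M\to N$, the naturality square of $\eta$ compares $f$ with $\Omega((Bf)^{\fib})$, and the horizontal maps are weak equivalences by Lemma~\ref{lem:grp-compl-equivalence-on-gplike}; since $\Omega$ reflects weak equivalences between connected spaces (as already exploited in the proof of Lemma~\ref{lem:bar-con-group-compl}), $f$ is a standard weak equivalence if and only if $Bf$ is, i.e.\ if and only if it is a group completion weak equivalence. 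This is also the general fact that a weak equivalence between fibrant objects of a left Bousfield localization is a local weak equivalence precisely when it is a weak equivalence in the original structure.

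Finally I would invoke fibrant replacement twice. On one hand, the homotopy category of any model category is computed by the localization of its fibrant objects at the weak equivalences; applied to the group completion model structure and combined with the previous paragraph, this shows that $\Ho_{\gp}(s\mathcal M)$ is the localization of the fibrant grouplike monoids at the standard weak equivalences. On the other hand, standard fibrant replacement sends every grouplike monoid to a fibrant grouplike one through a standard weak equivalence, so this same localization agrees with the localization of all grouplike monoids at the standard weak equivalences. Matching the two descriptions with the Quillen equivalence yields the claimed equivalence. I expect the main obstacle to lie precisely in this bookkeeping: one must verify with care that the group completion and standard weak equivalences genuinely agree on the relevant fibrant grouplike objects, and that restricting to fibrant objects does not alter the localization; once the coincidence of weak equivalences is in place, everything else is a formal consequence of the theory of Quillen equivalences and Bousfield localization.
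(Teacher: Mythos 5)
Your proof is correct and follows essentially the route the paper intends: the corollary is stated without proof precisely because it is the combination of the preceding Quillen-equivalence corollary (identifying $\Ho(s\mathcal{AB})$ with the homotopy category of the group completion model structure) with the standard identification of that homotopy category with the localization of the grouplike objects at the underlying weak equivalences, using that the fibrant objects are exactly the fibrant grouplike monoids and that local equivalences between them are ordinary weak equivalences. The bookkeeping you supply --- invariance of grouplikeness under weak equivalences, the coincidence of the two notions of weak equivalence on fibrant grouplike objects, and the passage to fibrant replacements --- is exactly the implicit content behind the paper's omitted proof.
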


\begin{remark}
  We have seen that the derived adjunction unit $M \to
  \Omega((BM)^{\fib})$, the fibrant replacement $M \to
  M^{\gp}$ in the group completion model structure, and the
  degree-wise group completion $M \to M^{\textrm{deg-gp}}$ provide
  three equivalent ways of forming group completions of simplicial
  commutative monoids.
\end{remark}
The following result will be used in Section~\ref{sec:log-der}:
\begin{corollary}\label{cor:hocart-sAb-sM}
A commutative square  \begin{equation}\label{eq:hocart-sAb-sM}\xymatrix@-1pc{M \ar[r] \ar[d] & P \ar[d] \\ N \ar[r] & Q}
\end{equation} of simplicial abelian groups is homotopy cocartesian
in $s\mathcal{AB}$ if and only if it is homotopy cocartesian when viewed
as a square in $s\mathcal M$.
\end{corollary}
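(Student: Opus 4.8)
The plan is to compare the homotopy pushout of the span $N \leftarrow M \to P$ formed in the standard model structure on $s\mathcal{M}$ with the one formed in $s\mathcal{AB}$, passing through the group completion model structure $s\mathcal{M}^{\gp}$ of Proposition~\ref{prop:group-completion-model-str}. The decisive observation is that the standard and the group completion structures on $s\mathcal{M}$ have the \emph{same} cofibrations. Hence a cofibrant replacement $\tilde N \leftarrow \tilde M \to \tilde P$ of the span in the standard structure, with $\tilde M$ cofibrant and both legs cofibrations, is linked to $N \leftarrow M \to P$ by standard weak equivalences; since these are in particular group completion weak equivalences, the same replacement is cofibrant in $s\mathcal{M}^{\gp}$ as well. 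Consequently the single object $H := \tilde N \sqcup_{\tilde M} \tilde P$ represents the homotopy pushout in both structures, and it suffices to establish the chain of equivalences: the square is homotopy cocartesian in the standard structure iff $H \to Q$ is a standard weak equivalence iff it is a group completion weak equivalence iff the square is homotopy cocartesian in $s\mathcal{M}^{\gp}$ iff it is homotopy cocartesian in $s\mathcal{AB}$.

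The last equivalence is formal. The degreewise group completion and the forgetful functor form a Quillen equivalence between $s\mathcal{M}^{\gp}$ and $s\mathcal{AB}$ (the corollary following Lemma~\ref{lem:deg-gp-is-fib-rep}), so the derived functors are inverse equivalences of homotopy categories. Since $\mathbb{L}(-)^{\textrm{deg-gp}}$ is a left derived functor it preserves homotopy cocartesian squares; being moreover an equivalence, both it and its inverse $\mathbb{R}U$ preserve and reflect them. As every object of $s\mathcal{AB}$ is fibrant, $\mathbb{R}U$ agrees on our square with the plain forgetful functor, which identifies the square in $s\mathcal{AB}$ with its image in $s\mathcal{M}^{\gp}$. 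This gives the equivalence of homotopy cocartesianness in $s\mathcal{AB}$ and in $s\mathcal{M}^{\gp}$.

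The heart of the matter is the middle equivalence, that $H \to Q$ is a standard weak equivalence exactly when it is a group completion weak equivalence. First I would check that $H$ is grouplike. The functor $\pi_0 \colon s\mathcal{M}\to \mathcal{M}$ is left adjoint to the constant functor and therefore preserves pushouts, so $\pi_0 H \cong \pi_0 N \sqcup_{\pi_0 M} \pi_0 P$ with the pushout taken in $\mathcal{M}$. Here I would use that for commutative monoids the coproduct is the cartesian product, together with the elementary fact that a monoid congruence on an abelian group is the same datum as a subgroup; this shows that the pushout in $\mathcal{M}$ of a span of abelian groups coincides with the pushout in $\mathcal{AB}$. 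As $\pi_0 M$, $\pi_0 N$, $\pi_0 P$ are abelian groups, $\pi_0 H$ is then a group, so $H$ is grouplike. Granting this, both $H$ and the simplicial abelian group $Q$ are grouplike, and for $f \colon H \to Q$ naturality of the group completion $\eta$ of~\eqref{eq:Omega-B-functor} produces a commutative square whose vertical maps $\eta_H$, $\eta_Q$ are standard weak equivalences by Lemma~\ref{lem:grp-compl-equivalence-on-gplike}. By two-out-of-three, $f$ is a standard weak equivalence iff $\Omega((Bf)^{\fib})$ is one, and by the description of the weak equivalences in Proposition~\ref{prop:group-completion-model-str} the latter says precisely that $f$ is a group completion weak equivalence.

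I expect the groupliness of $H$ to be the main obstacle: it is exactly what forces the localized and the standard notions of weak equivalence to coincide on the comparison map $H \to Q$, and it rests on the special feature that coproducts of commutative monoids are cartesian products, so that the homotopy pushout of simplicial abelian groups acquires no non-grouplike $\pi_0$ when formed in $s\mathcal{M}$. The remaining verifications—that $\pi_0$ preserves pushouts and that monoid and abelian-group pushouts of abelian groups agree—are routine once this structural point has been isolated.
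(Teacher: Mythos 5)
Your proof is correct and rests on the same two pillars as the paper's own argument: the group-completion comparison between $s\mathcal{M}$ and $s\mathcal{AB}$, and the observation that the homotopy pushout in $s\mathcal{M}$ of a span of simplicial abelian groups is grouplike (because $\pi_0$ preserves pushouts and a monoid pushout of abelian groups is an abelian group). The paper's version is simply more compressed: it applies the degree-wise group completion functor directly, using Lemma~\ref{lem:deg-gp-is-fib-rep} for both directions, rather than routing through the group completion model structure and the Quillen equivalence, and it asserts without proof the grouplikeness that you verify via the $\pi_0$-argument.
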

\begin{proof}
  By Lemma~\ref{lem:deg-gp-is-fib-rep}, the left Quillen functor
  $(-)^{\textrm{deg-gp}}\colon s\mathcal M \to s\mathcal{AB}$ sends
  weak equivalences between not necessarily cofibrant objects to weak
  equivalences.  So if~\eqref{eq:hocart-sAb-sM} is homotopy
  cocartesian in $s\mathcal M$, applying $(-)^{\textrm{deg-gp}}$ shows
  that the square is homotopy cocartesian in
  $s\mathcal{AB}$. Let~\eqref{eq:hocart-sAb-sM} be homotopy
  cocartesian in $s\mathcal{AB}$. We choose a factorization
  $\xymatrix@1{M \,\ar@{>->}[r] & N^c \ar[r]^{\sim} & N}$ in
  $s\mathcal M$ and deduce that $N^c\coprod_M P \to Q$ is a weak
  equivalence in $s\mathcal M$ since it is a weak equivalence after
  applying $(-)^{\textrm{deg-gp}}$ and its domain and codomain are
  grouplike.
\end{proof}
\subsection{Repletion}\label{subsec:repletion}
Many of the conditions on commutative monoids that are useful in
logarithmic geometry do not appear to provide homotopy invariant
notions when imposing them in each level of a simplicial commutative
monoids. As explained by Rognes in~\cite[Remark 3.2]{rog-log}, the
notion of \emph{repletion} for  commutative
monoids~\cite[\S 3]{rog-log} and for commutative $\mathcal I$-space
monoids~\cite[\S 8]{rog-log} is made to overcome this difficulty
in one relevant instance. Repletion has already proved useful for
the definition of logarithmic topological Hochschild homology
in~\cite[\S 8]{rog-log} and~\cite{Rognes-S-S_log-THH}.  The close
relation between repletion and a group completion model structure on
commutative $\mathcal I$-space monoids explained
in~\cite[\S 5.10]{Sagave-S_group-compl} makes it easy to adopt this
to simplicial commutative rings.
\begin{definition}
Let $M \to N$ be a map of simplicial commutative  monoids and let
\[ \xymatrix{M \ar@{>->}[r]^{\sim}  & M^{\mathrm{rep}} \ar@{->>}[r] & M}\]
be a factorization of this map in the group completion model structure. 
Then $M \to M^{\mathrm{rep}}$ is the \emph{repletion} of $M$ over $N$. 
\end{definition}
General properties of left Bousfield localizations imply that as an
object under $M$ and over $N$, the repletion $M^{\mathrm{rep}}$ is
well defined up to weak equivalence in the standard model structure
on $s\mathcal M$.
\begin{definition}\label{def:virt-surj-and-exact}
A  map of simplicial commutative  monoids $M \to N$ is \emph{virtually surjective}
 if the induced homomorphism $(\pi_0(M))^{\gp} \to (\pi_0(N))^{\gp}$ is surjective (compare~\cite[Definition 3.6]{rog-log}).
It is \emph{exact} \cite{ka} if the following square is homotopy 
cartesian in the standard
model structure on $s\mathcal M$:
\[\xymatrix@-1pc{
M \ar[r] \ar[d] & M^{\gp} \ar[d] \\
N \ar[r] & N^{\gp}}
\]
\end{definition}
The next proposition states that for a virtually surjective map,
repletion enforces exactness and can be defined by only using group completions.
\begin{proposition}\label{prop:exact-replete} Let $M \to N$ be a virtually surjective map of simplicial commutative  monoids. 
\begin{enumerate}[(i)]
\item The canonical map $M^{\gp} \to (M^{\mathrm{rep}})^{\gp}$ is a weak equivalence
in the standard model structure. 
\item The repletion $M^{\mathrm{rep}}$ is weakly equivalent  to the map from $M$ into the homotopy
  pullback of $N \to N^{\gp} \leftarrow M^{\mathrm{rep}}$ (with respect to the standard model
structure). 
\item The map $M^{\mathrm{rep}} \to N$ is exact. 
\end{enumerate}
\end{proposition}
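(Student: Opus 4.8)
The plan is to dispatch~(i) directly and then to observe that~(ii) and~(iii) are two formulations of a single statement, namely the exactness of $M^{\mathrm{rep}}\to N$, which is where the real work lies. For~(i), recall from the proof of Proposition~\ref{prop:group-completion-model-str} that a map is a weak equivalence in the group completion model structure if and only if it becomes a weak equivalence after applying $\Omega((B(-))^{\fib})$, and that this functor models $(-)^{\gp}$ (see the Remark following Lemma~\ref{lem:deg-gp-is-fib-rep}). Since the repletion map $M\to M^{\mathrm{rep}}$ is by definition an acyclic cofibration in the group completion model structure, applying $(-)^{\gp}$ yields the standard weak equivalence $M^{\gp}\to (M^{\mathrm{rep}})^{\gp}$ of~(i). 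In particular $(\pi_0 M)^{\gp}\to (\pi_0 M^{\mathrm{rep}})^{\gp}$ is an isomorphism, so $M^{\mathrm{rep}}\to N$ is again virtually surjective, a fact I will need below.

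Granting the exactness statement~(iii), part~(ii) is immediate: exactness says precisely that the square of Definition~\ref{def:virt-surj-and-exact} for $M^{\mathrm{rep}}\to N$ is homotopy cartesian, i.e.\ that $M^{\mathrm{rep}}$ is the homotopy pullback of $N\to N^{\gp}\leftarrow (M^{\mathrm{rep}})^{\gp}$, and~(i) lets us identify $(M^{\mathrm{rep}})^{\gp}$ with $M^{\gp}$. Thus the crux is to show that the group completion fibration $q\colon M^{\mathrm{rep}}\to N$ supplied by the repletion factorization is exact. My first reduction is to replace $q$ by its fibrant model: factoring the composite $M^{\mathrm{rep}}\to N\to N^{\gp}$ in the group completion model structure as an acyclic cofibration followed by a fibration produces a commuting square
\[\xymatrix@-1pc{ M^{\mathrm{rep}} \ar[r] \ar[d]_{q} & R \ar[d]^{r} \\ N \ar[r] & N^{\gp}}\]
whose horizontal maps are group completion equivalences and whose right-hand object $R\simeq (M^{\mathrm{rep}})^{\gp}$ is grouplike, with $r$ a fibration between grouplike fibrant objects and hence a standard fibration. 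Since the horizontal maps are group completions, exactness of $q$ is equivalent to this square being homotopy cartesian in the standard model structure, and this in turn reduces to showing that the comparison map from the homotopy fibre of $q$ to the homotopy fibre of $r$ (over the unit) is a weak equivalence.

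The hard part will be exactly this comparison of homotopy fibres, and it is where virtual surjectivity enters. First I would show that the homotopy fibre $F$ of the group completion fibration $q$ is itself grouplike; this should follow from the right lifting property of $q$ against the generating acyclic cofibrations of the localization, which are built from $\xi$ and force the fibres to be $\xi$-local in the sense of Lemma~\ref{lem:char-local}. Being grouplike, $F$ is recovered from its classifying space by Lemma~\ref{lem:grp-compl-equivalence-on-gplike}, so $F\simeq \Omega((BF)^{\fib})$. Next, applying the bar construction to the fibre sequence $F\to M^{\mathrm{rep}}\xrightarrow{q} N$ and invoking the Bousfield--Friedlander theorem exactly as in the proof of Lemma~\ref{lem:grp-compl-equivalence-on-gplike}, I would show that $BF\to BM^{\mathrm{rep}}\to BN$ is again a homotopy fibre sequence; here the hypothesis that $q$ is virtually surjective provides the surjectivity on $\pi_0$ that the Bousfield--Friedlander $\pi_*$-Kan condition requires. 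Its necessity is signalled by the failure of right properness recorded in the Remark after Proposition~\ref{prop:group-completion-model-str}, and is precisely the phenomenon of \cite[\S 5.7]{Bousfield-F_Gamma-bisimplicial}.

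Finally, looping back and using that $r$ is a map between grouplike objects (so that $r\simeq \Omega((Br)^{\fib})$ while the horizontal maps become equivalences after $B$) identifies $F\simeq \Omega((BF)^{\fib})$ with the homotopy fibre of $r$. This gives the desired homotopy cartesian square and hence the exactness~(iii), from which~(ii) follows as explained. Throughout, the descriptions of the weak equivalences and fibrant objects in the group completion model structure are those of Proposition~\ref{prop:group-completion-model-str}, and the whole argument is modelled on the corresponding analysis of replete maps in \cite[\S 5.10]{Sagave-S_group-compl}.
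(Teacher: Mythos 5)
Your part (i), your observation that the fibre $F$ of the localized fibration $q\colon M^{\mathrm{rep}}\to N$ is grouplike (being a pullback of $q$, it is fibrant in the group completion structure), and your identification of Bousfield--Friedlander as the tool that compensates for the failure of right properness are all in line with the intended argument; so is the easy equivalence between (ii) and (iii) given (i). The gap is in the central step: you invoke \cite[Theorem B.4]{Bousfield-F_Gamma-bisimplicial} for the levelwise fibration $B_{\bullet}M^{\mathrm{rep}}\to B_{\bullet}N$, and its hypotheses are not available there. That theorem requires, besides the $\pi_*$-Kan condition on both bisimplicial sets, that the map of vertical $\pi_0$'s, here the map of nerves $B_{\bullet}(\pi_0 M^{\mathrm{rep}})\to B_{\bullet}(\pi_0 N)$, be a Kan fibration. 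For monoids that are not groups this is a very strong divisibility condition, not a surjectivity condition: already the $\Lambda^2_0$-horns demand that whenever $q(p)\,n_2=q(p')$ in $\pi_0 N$ one can find $p''$ with $q(p'')=n_2$ and $p\,p''=p'$ in $\pi_0 M^{\mathrm{rep}}$. Likewise the $\pi_*$-Kan condition for $B_\bullet N$ amounts to horn-filling for the bundles of vertical homotopy groups, which forces translation by $\pi_0$-elements to act surjectively on higher homotopy groups at all base points; this fails for non-grouplike $N$ (e.g.\ $N=\{1\}\sqcup S$ with $s\cdot t=0$ for a simplicial abelian group $S$, where translation by $0$ kills $\pi_m$). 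Virtual surjectivity is a statement about $(\pi_0 M)^{\gp}\to(\pi_0 N)^{\gp}$ and supplies none of this; the properties of $q$ you would need are exactly the structural description of the localized fibration $q$ that the proposition is meant to establish (and which, as the paper notes elsewhere, one does not have a priori for fibrations in a left Bousfield localization), so your justification of the fibre sequence $BF\to BM^{\mathrm{rep}}\to BN$ is essentially circular. Note that the paper's own use of Bousfield--Friedlander, in the proof of Lemma~\ref{lem:grp-compl-equivalence-on-gplike}, is for $E_{\bullet}M\to B_{\bullet}M$ with $M$ \emph{grouplike}, which is precisely what makes $E_\bullet(\pi_0M)\to B_\bullet(\pi_0M)$ a Kan fibration there.

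This explains why the paper (following \cite[Proposition 5.16]{Sagave-S_group-compl}) proves (ii) first and then deduces (iii) from (i) and (ii), rather than your order; the reversal is not cosmetic but forced by where the Bousfield--Friedlander hypotheses can be checked. One applies the theorem to the square built from the group completions, i.e.\ to $B_{\bullet}M^{\gp}\to B_{\bullet}N^{\gp}$ with $M^{\gp}\to N^{\gp}$ arranged as a fibration between grouplike fibrant objects. There the $\pi_*$-Kan condition holds because everything in sight is grouplike, and the vertical-$\pi_0$ hypothesis becomes the assertion that $B_\bullet((\pi_0 M)^{\gp})\to B_\bullet((\pi_0 N)^{\gp})$ is a Kan fibration, which for nerves of groups is exactly surjectivity---this is the one and only place virtual surjectivity enters. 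It follows that $B$ preserves the homotopy pullback $P=N\times^{h}_{N^{\gp}}M^{\gp}$; since $BN\to BN^{\gp}$ is an equivalence, $P\to M^{\gp}$ and hence $M\to P$ are equivalences in the group completion structure, identifying $M\to P\to N$ with the repletion factorization and proving (ii). A secondary defect of your argument: even granting your fibre-sequence claim, comparing homotopy fibres of $q$ and $r$ only over the unit does not give homotopy cartesianness of the square in Definition~\ref{def:virt-surj-and-exact}; since $N$ is not grouplike there is no translation argument relating the fibres of $q$ over the other components of $N$ to $F$, so all base points of $N$ would have to be treated.
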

\begin{proof}
  The properties of the group completion model structure imply
  (i). For (ii), one can use a similar argument as in the proof
  of~\cite[Proposition 5.16]{Sagave-S_group-compl}. The key
  ingredient is the Bousfield--Friedlander
  theorem~\cite[Theorem B.4]{Bousfield-F_Gamma-bisimplicial} that
  compensates for the missing right properness of the group completion
  model structure. Part (iii) follows from (i) and~(ii).
\end{proof}
If $M$ is a simplicial commutative monoid under and over $N$, then $M
\to N$ is automatically virtually surjective, and passing to the
repletion ensures exactness of the augmentation. 

\section{Logarithmic simplicial rings}
The functor sending a commutative ring $A$ to its underlying multiplicative mon\-oid $(A,\cdot)$ is
right adjoint to the integral monoid ring functor $\mathbb Z[-]$ from commutative monoids
to commutative rings. Applying this adjunction degree-wise provides an adjunction 
\begin{equation}\label{eq:sM-sR-adjunction}
\mathbb Z[-]\colon s \mathcal M \rightleftarrows s \mathcal R \colon (-,\cdot)
\end{equation}
between the associated categories of simplicial objects. The following definition is 
 the obvious generalization of the pre-log structures introduced by Kato in~\cite{ka}.
\begin{definition}
  A \emph{pre-log structure} $(M,\alpha)$ on a simplicial commutative ring $R$ is a
  simplicial commutative monoid $M$ together with a map of simplicial
  commutative monoids $\alpha\colon M \to (A,\cdot)$. A simplicial
  commutative ring $R$ together with a pre-log structure $(M,\alpha)$
  is called a \emph{pre-log} simplicial ring. It is denoted by $(A,M,
  \alpha)$ or simply by $(A,M)$ if $\alpha$ is understood from the
  context.

  A map of simplicial pre-log rings $(A,M) \to (B,N)$ is a pair $(f,f^{\flat})$ of
  maps $f \colon A \to B$ in $s\mathcal R$ and $f^{\flat} \colon M \to
  N$ in $s\mathcal M$ such that the obvious square commutes. We write $s\mathcal P$
for the resulting category of simplicial commutative pre-log rings. 
\end{definition}
Viewing pre-log simplicial rings as simplicial objects in pre-log
rings, the same arguments as in the case of $s\mathcal M$ and
$s\mathcal R$ show that $s\mathcal P$ is a simplicial category. Since
$\mathbb Z[-]$ preserves coproducts, it is immediate that
$(A,M)\otimes K \cong (A \otimes K,M\otimes K)$.  Expressing the
compatibility of the two components of a map of pre-log simplicial
rings as a pullback, it follows that the mapping spaces in $s\mathcal
P$ are related to the mapping spaces in $s\mathcal M$ and $s\mathcal
R$ by a pullback square
\[\xymatrix@-1pc{
\bfHom_{s\mathcal P}((A,M),(B,N)) \ar[r] \ar[d] & \bfHom_{s\mathcal R}(A,B)\ar[d]\\
\bfHom_{s\mathcal M}(M,N)\ar[r]& \bfHom_{s\mathcal M}(M,(B,\cdot))}
\]
\subsection{The pre-log model structures}
Since the adjunction~\eqref{eq:sM-sR-adjunction} is a Quillen
adjunction with respect to the model structures from
Proposition~\ref{prop:model-str-on-sR-and-sM}, we obtain the two model
structures on $s\mathcal P$ described in the next two propositions:
\begin{proposition}\label{prop:inj-pre-log-model-str}
  The category of simplicial pre-log rings $s\mathcal P$ admits an
  \emph{injective} proper simplicial cellular model structure where
  $(f,f^{\flat}) \colon (A,M) \to (B,N)$ is
\begin{itemize}
\item a weak equivalence (or a cofibration) if both $f$ and
  $f^{\flat}$ are weak equivalences (or cofibrations) in the standard
  model structures on $s\mathcal R$ and $s\mathcal M$ and
\item a fibration if $f$ is a fibration in $s\mathcal R$
  and the induced map $M \to (A,\cdot)\times_{(B,\cdot)}N$ is a
  fibration in $s\mathcal M$.
\end{itemize}
\end{proposition}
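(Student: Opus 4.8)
The plan is to realize $s\mathcal{P}$ as the comma category of the right Quillen functor $(-,\cdot)\colon s\mathcal{R}\to s\mathcal{M}$ and to put on it the \emph{injective}-type model structure, whose fibrations are governed by a matching (pullback) condition. Because $(-,\cdot)$ is a right adjoint, all small limits in $s\mathcal{P}$ are computed on the underlying ring and monoid; small colimits exist as well, the underlying ring and monoid being the respective colimits and the structure map being induced from the colimit of the $\alpha_i$ via the canonical comparison $\operatorname{colim}(A_i,\cdot)\to(\operatorname{colim}A_i,\cdot)$. This gives MC1, and MC2--MC3 are immediate since weak equivalences and retracts are detected componentwise. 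The engine for everything else is that $(-,\cdot)$ preserves fibrations and acyclic fibrations by Proposition~\ref{prop:model-str-on-sR-and-sM}, together with the pullback-stability of (acyclic) fibrations.

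The key lemma I would prove first is a clean description of the acyclic fibrations: $(f,f^{\flat})$ is an acyclic fibration if and only if $f\colon A\to B$ is an acyclic fibration in $s\mathcal{R}$ and $M\to (A,\cdot)\times_{(B,\cdot)}N$ is an acyclic fibration in $s\mathcal{M}$. Indeed, if $f$ is an acyclic fibration then so is $(A,\cdot)\to(B,\cdot)$, hence the projection $(A,\cdot)\times_{(B,\cdot)}N\to N$ is an acyclic fibration, and a two-out-of-three argument converts the weak equivalence condition on $f^{\flat}$ into one on $M\to (A,\cdot)\times_{(B,\cdot)}N$. With this in hand the factorizations of MC5 are produced in two steps: given $(f,f^{\flat})$, first factor $f$ in $s\mathcal{R}$, form the pullback $(A',\cdot)\times_{(B,\cdot)}N$, and then factor the induced map $M\to (A',\cdot)\times_{(B,\cdot)}N$ in $s\mathcal{M}$; reading off the components yields a (cofibration, acyclic fibration) factorization, and using the other factorizations of $f$ and of the monoid map yields an (acyclic cofibration, fibration) factorization. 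The lifting axiom MC4 is solved in the same order: one first solves the lifting problem on the ring component in $s\mathcal{R}$, which turns the monoid component into a lifting problem of a cofibration against $M''\to (A'',\cdot)\times_{(B'',\cdot)}N''$ in $s\mathcal{M}$, solved by the lemma.

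The step I expect to be the real obstacle is cofibrant generation, and with it cellularity, because the naive generators fail: applying $\mathbb{Z}[-]$ on the monoid side yields maps whose injectives are only the \emph{componentwise} acyclic fibrations, a strictly larger class, so the matching condition is invisible to them. The correct generating cofibrations are, on the ring side, the maps $(P,*)\to(Q,*)$ for $j\colon P\to Q$ a generating cofibration of $s\mathcal{R}$ and $*$ the trivial monoid, whose right lifting property encodes ``$f$ is an acyclic fibration'', together with the modified maps
\[
B_i\colon(\mathbb{Z}[V],\,U,\,\eta\circ i)\longrightarrow(\mathbb{Z}[V],\,V,\,\eta)
\]
for $i\colon U\to V$ a generating cofibration of $s\mathcal{M}$, with identity ring component and monoid component $i$; here $\eta\colon V\to(\mathbb{Z}[V],\cdot)$ is the unit of $\mathbb{Z}[-]\dashv(-,\cdot)$. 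The point is that the right lifting property against $B_i$ unwinds, via this adjunction, exactly to the right lifting property of $M\to (A,\cdot)\times_{(B,\cdot)}N$ against $i$, so the two families together detect precisely the acyclic fibrations. Replacing the generating cofibrations by the generating acyclic cofibrations of $s\mathcal{R}$ and $s\mathcal{M}$ gives generating acyclic cofibrations detecting the fibrations. Cellularity then follows by adapting the verification for $s\mathcal{R}$ and $s\mathcal{M}$ in Proposition~\ref{prop:model-str-on-sR-and-sM}: the displayed domains and codomains are compact and the cofibrations are effective monomorphisms, both checked on underlying rings and monoids.

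Finally, properness and the simplicial structure are inherited componentwise. For right properness one observes that the monoid component $M\to N$ of a fibration is again a fibration, since it is the composite $M\to (A,\cdot)\times_{(B,\cdot)}N\to N$ of two fibrations, and that pullbacks in $s\mathcal{P}$ are computed on underlying objects; right properness then reduces to that of $s\mathcal{R}$ and $s\mathcal{M}$. Left properness is analogous, using that pushouts have underlying ring and monoid the respective pushouts. For the enriched axiom one uses that the tensor is computed componentwise, $(A,M)\otimes K\cong(A\otimes K,M\otimes K)$, so that the pushout-product of a cofibration with a monomorphism of simplicial sets has as components the corresponding pushout-products; the simplicial model category axiom therefore reduces to the simplicial structures on $s\mathcal{R}$ and $s\mathcal{M}$.
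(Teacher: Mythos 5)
Your proof is correct and follows essentially the same route as the paper: your generating cofibrations $B_i$ are exactly the paper's maps $(\mathbb{Z}[L],K)\to(\mathbb{Z}[L],L)$, your adjunction-unwinding of the lifting property against $B_i$ is precisely the paper's Lemma~\ref{lem:comparing-lifting-conditions}, and your two-step factorization and lifting arguments via the pullback characterization of acyclic fibrations are the ``standard lifting arguments'' the paper invokes without detail. The only difference is expository: you spell out what the paper leaves implicit, including the correct observation that the naive generators $(\mathbb{Z}[i],i)$ would only detect the projective (componentwise) acyclic fibrations.
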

We call this model structure the \emph{injective
pre-log model structure} and write $s\mathcal P^{\mathrm{inj}}$ for the resulting model
category. The fibrant objects are called
\emph{pre-fibrant}.
\begin{proof}
The existence of this model structure is established by standard lifting arguments.
Using Lemma~\ref{lem:comparing-lifting-conditions} below, one can check that the
generating cofibrations $I_{s\mathcal M}$ and $I_{s\mathcal R}$ for $s\mathcal M$ and $s\mathcal R$
give rise to a set
\[ \{(\mathbb Z[L],K)\to(\mathbb Z[L],L) \,|\, K\to L \in I_{s\mathcal M}\} \cup \{(i,id_{*}) \,|\, i \in I_{s\mathcal R} \}\]
of generating cofibrations for $s\mathcal P$, and similarly for the generating acyclic cofibrations. 
\end{proof}
Similarly, we get a \emph{projective pre-log model structure} denoted by $s\mathcal P^{\mathrm{proj}}$: 
\begin{proposition}\label{prop:proj-pre-log-model-str}
  The category of simplicial pre-log rings $s\mathcal P$ admits a
  \emph{projective} proper simplicial cellular model structure where
  $(f,f^{\flat}) \colon (A,M) \to (B,N)$ is
\begin{itemize}
\item a weak equivalence (or a fibration) if both $f$ and
  $f^{\flat}$ are weak equivalences (or fibrations) in the standard
  model structures on $s\mathcal R$ and $s\mathcal M$ and
\item a cofibration if $f^{\flat}$ is a cofibration in $s\mathcal M$
  and the induced map $\mathbb{Z}[N]\otimes_{\mathbb{Z}[M]}A \to B$ is a
  cofibration in $s\mathcal R$.
\end{itemize}
\end{proposition}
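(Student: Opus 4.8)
The plan is to obtain $s\mathcal P^{\mathrm{proj}}$ by transfer (right induction) along the forgetful functor $U\colon s\mathcal P\to s\mathcal M\times s\mathcal R$ sending $(A,M,\alpha)$ to $(M,A)$; by definition $U$ creates exactly the asserted weak equivalences and fibrations. First I would record that $U$ admits the left adjoint $F(M,A)=(A\otimes_{\mathbb Z}\mathbb Z[M],M)$ with structure map $M\to(A\otimes_{\mathbb Z}\mathbb Z[M],\cdot)$: a map $F(M,A)\to(B,N,\beta)$ is the same as a ring map $A\to B$ together with a monoid map $M\to N$, i.e.\ a map $(M,A)\to U(B,N,\beta)$. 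I would also note that the two projections $\Pi_{\mathcal M}\colon(A,M)\mapsto M$ and $\Pi_{\mathcal R}\colon(A,M)\mapsto A$ are themselves left adjoints (to $P\mapsto(0,P)$, where $0$ is the zero ring, and to $B\mapsto(B,*)$, respectively). Hence colimits in $s\mathcal P$ are computed componentwise, $U$ preserves all colimits, and in particular any object of the form $F(\text{small})$ is small, so the small-object argument is available.

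With this in hand I would invoke the standard transfer theorem for cofibrantly generated model categories (\cite[\S 11.3]{Hirschhorn_model}). The candidate generating (acyclic) cofibrations are the images under $F$ of the generating (acyclic) cofibrations of the product, namely of $\{(i,\id_{\mathbb Z}):i\in I_{s\mathcal M}\}\cup\{(\id_*,i'):i'\in I_{s\mathcal R}\}$ and similarly for $J$. Computing, $F(i,\id_{\mathbb Z})$ is $(\mathbb Z[K],K)\to(\mathbb Z[L],L)$ for $i\colon K\to L$, and $F(\id_*,i')$ is $(A,*)\to(B,*)$ for $i'\colon A\to B$; both are projective cofibrations in the sense of the proposition, which is a first consistency check. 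It then remains to verify the two hypotheses of the transfer theorem: the smallness condition, which holds by the previous paragraph, and the acyclicity condition that every relative cell complex built from $FJ$ is a weak equivalence.

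The acyclicity condition is the heart of the argument. Since components are computed componentwise, it suffices to show that each pushout of a map in $FJ$ is a componentwise weak equivalence and that transfinite composites preserve this. For a pushout of $F(j)\colon(\mathbb Z[K],K)\to(\mathbb Z[L],L)$, with $j$ an acyclic cofibration of monoids, along an arbitrary $F(K,\mathbb Z)\to(A,M)$, the componentwise formulas give monoid component $M\to M\amalg_K L$ and ring component $A\to A\otimes_{\mathbb Z[K]}\mathbb Z[L]$. The former is a pushout of the acyclic cofibration $j$, hence a weak equivalence; the latter is a pushout of $\mathbb Z[j]$, which is an acyclic cofibration because $\mathbb Z[-]$ is left Quillen, hence again a weak equivalence. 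The pushouts of the generators $F(\id_*,k)$ are handled identically, with identity monoid component and ring component $C\to C\otimes_A B$. Transfinite composites of such maps are componentwise transfinite composites of acyclic cofibrations, so $U$ sends relative $FJ$-cells to weak equivalences. This produces the model structure. I expect the main obstacle here to be purely bookkeeping: getting the componentwise pushout formulas right and then importing left Quillenness of $\mathbb Z[-]$ at the ring level.

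Finally I would identify the cofibrations. Every $(f,f^{\flat})\colon(A,M)\to(B,N)$ factors as $(A,M)\to(\mathbb Z[N]\otimes_{\mathbb Z[M]}A,N)\to(B,N)$, where the first map is the pushout of $F(f^{\flat})$ along $F(M,\mathbb Z)\to(A,M)$ and the second has identity monoid component and ring component $\mathbb Z[N]\otimes_{\mathbb Z[M]}A\to B$. The first map is then a cofibration when $f^{\flat}$ is one (being a pushout of the cofibration $F(f^{\flat})$), and a direct lifting check shows the second is a cofibration when $\mathbb Z[N]\otimes_{\mathbb Z[M]}A\to B$ is; conversely, applying $\Pi_{\mathcal M}$ to an $FI$-cell exhibits $f^{\flat}$ as a monoid cofibration, and an induction over cell attachments controls the relative ring map, both conditions being stable under retracts. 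This gives the stated description. For properness I would observe that such a cofibration has both underlying maps cofibrations: $f^{\flat}$ directly, and $f\colon A\to B$ as the composite of the base change $A\to\mathbb Z[N]\otimes_{\mathbb Z[M]}A$ of the ring cofibration $\mathbb Z[f^{\flat}]$ with the ring cofibration above. Left properness then follows from left properness of $s\mathcal M$ and $s\mathcal R$ via the componentwise pushout formulas, and right properness from right properness of the components together with the fact that $U$, as a right adjoint, also creates limits. Cellularity and the simplicial structure (the pushout-product axiom) are inherited from $s\mathcal M$ and $s\mathcal R$ exactly as in Proposition~\ref{prop:model-str-on-sR-and-sM}.
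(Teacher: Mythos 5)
Your proposal is essentially the paper's own proof written out in full: the paper's argument is precisely the ``standard lifting argument,'' i.e.\ transfer along the free--forgetful adjunction against $s\mathcal M\times s\mathcal R$, with the same generating sets --- your $F(i,\id_{\mathbb Z})$ and $F(\id_{*},i')$ are exactly the maps $(\mathbb Z[i],i)$ and $(i,\id_{*})$ that the paper exhibits as generating cofibrations. Your verification of the acyclicity condition via left Quillenness of $\mathbb Z[-]$, the identification of the cofibrations through the factorization $(A,M)\to(\mathbb Z[N]\otimes_{\mathbb Z[M]}A,\,N)\to(B,N)$, and the deduction of properness from the fact that projective cofibrations are componentwise cofibrations are all correct and fill in what the paper leaves implicit.

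One auxiliary claim is false as stated: the projection $\Pi_{\mathcal R}\colon(A,M)\mapsto A$ is not left adjoint to $B\mapsto(B,*)$. A map $(A,M,\alpha)\to(B,*)$ consists of a ring map $f\colon A\to B$ such that $f\circ\alpha$ is constant at $1$, which is not an arbitrary ring map; in fact $B\mapsto(B,*)$ is the \emph{left} adjoint of $\Pi_{\mathcal R}$. The right adjoint of $\Pi_{\mathcal R}$ is instead $B\mapsto(B,(B,\cdot),\id)$, the ring $B$ equipped with its full multiplicative monoid as pre-log structure. This slip does not damage the architecture of your argument: with the corrected right adjoint, $\Pi_{\mathcal R}$ is still a left adjoint, hence colimits in $s\mathcal P$ are still computed componentwise, which is the only consequence you use.
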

\begin{proof}
  Again this follows by standard lifting arguments.  In this case the
  generating cofibrations $I_{s\mathcal M}$ and $I_{s\mathcal R}$ for
  $s\mathcal M$ and $s\mathcal R$ give rise to a set
\[ \{(\mathbb Z[i],i) \,|\, i \in I_{s\mathcal M}\} \cup \{(i,id_{*}) \,|\, i \in I_{s\mathcal R} \}\]
of generating cofibrations for $s\mathcal P$, and similarly for the generating acyclic cofibrations. 
\end{proof}
\begin{corollary}
  The identity functor from simplicial pre-log rings with the
  projective model structure to simplicial pre-log rings with the
  injective model structure is the left Quillen functor of a Quillen
  equivalence.\qed
\end{corollary}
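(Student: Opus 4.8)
The plan is to exploit the fact that the injective and projective pre-log model structures share the same class of weak equivalences, so that the only real work lies in comparing their cofibrations.

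First I would record that, by Propositions~\ref{prop:inj-pre-log-model-str} and~\ref{prop:proj-pre-log-model-str}, a map $(f,f^{\flat})$ is a weak equivalence in $s\mathcal P^{\mathrm{inj}}$ if and only if it is a weak equivalence in $s\mathcal P^{\mathrm{proj}}$: in both cases the condition is that $f$ and $f^{\flat}$ are weak equivalences in the standard model structures on $s\mathcal R$ and $s\mathcal M$. Consequently, to see that the identity functor $s\mathcal P^{\mathrm{proj}} \to s\mathcal P^{\mathrm{inj}}$ is left Quillen it suffices to show that every projective cofibration is an injective cofibration; the corresponding statement for acyclic cofibrations then follows automatically, since acyclic cofibrations are precisely the cofibrations that are also weak equivalences and the two notions of weak equivalence agree.

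The key step is the comparison of cofibrations. Let $(f,f^{\flat})\colon (A,M)\to(B,N)$ be a projective cofibration, so that $f^{\flat}\colon M\to N$ is a cofibration in $s\mathcal M$ and the induced map $\mathbb Z[N]\otimes_{\mathbb Z[M]}A\to B$ is a cofibration in $s\mathcal R$. The monoid component $f^{\flat}$ is already a cofibration, which is one of the two conditions defining an injective cofibration, so it remains to show that $f\colon A\to B$ is a cofibration in $s\mathcal R$. I would factor $f$ as the composite
\[ A\longrightarrow \mathbb Z[N]\otimes_{\mathbb Z[M]}A \longrightarrow B. \]
The second map is a cofibration by hypothesis. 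The first map is the cobase change of $\mathbb Z[M]\to\mathbb Z[N]$ along the structure map $\mathbb Z[M]\to A$; since the adjunction~\eqref{eq:sM-sR-adjunction} is a Quillen adjunction, the left adjoint $\mathbb Z[-]$ preserves cofibrations, so $\mathbb Z[M]\to\mathbb Z[N]$ is a cofibration in $s\mathcal R$, and cofibrations are stable under cobase change. Hence $f$ is a composite of two cofibrations and therefore a cofibration, so $(f,f^{\flat})$ is an injective cofibration.

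Finally I would conclude that the identity adjunction is a Quillen equivalence. Having established that it is a Quillen adjunction, this is now formal: since the two model structures have the same weak equivalences, their homotopy categories coincide and the total derived functors of the identity adjunction are the identity of this common homotopy category, which is certainly an equivalence. I do not expect a serious obstacle; the only point requiring care is the cofibration comparison, where one must resist the temptation to compare $f$ directly and instead route through the pushout $\mathbb Z[N]\otimes_{\mathbb Z[M]}A$ so as to bring the left Quillen functor $\mathbb Z[-]$ into play.
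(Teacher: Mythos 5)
Your proof is correct and is exactly the routine verification that the paper leaves implicit (the corollary carries a \qed with no written proof): the weak equivalences visibly coincide, and the comparison of cofibrations via the factorization $A \to \mathbb{Z}[N]\otimes_{\mathbb{Z}[M]}A \to B$, using that $\mathbb{Z}[-]$ is left Quillen and that cofibrations are closed under cobase change and composition, is the intended argument. Nothing is missing; the concluding Quillen-equivalence step is formal precisely because the two structures share their weak equivalences.
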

\begin{remark}The corollary implies that the two model structures are
  equivalent for many purposes. However, as we will see in
  Section~\ref{subsec:log-model} below, the fact that the injectively
  fibrant objects $(A,M)$ have the property that the structure map $M
  \to (A,\cdot)$ is a fibration makes the injective model structure
  more convenient for the purpose of \emph{log} structures.
\end{remark}

If $(A,M,\alpha)$ is a pre-log simplicial ring, we write
$(A,\cdot)^{\times}$ for the sub simplicial commutative monoid of
invertible path components $(A,\cdot)^{\times} \subset
(A,\cdot)$, i.e, the sub simplicial commutative monoid of $(A,\cdot)$ consisting of those simplices whose vertices represent units in the multiplicative
monoid $\pi_0(A)$. Using $(A,\cdot)^{\times}$, we form the following pullback square:
\begin{equation}\label{eq:log-condition} 
\xymatrix@-1pc{
\alpha^{-1}((A,\cdot)^{\times}) \ar[d] \ar[r] & (A,\cdot)^{\times} \ar[d] \\
M \ar[r]^{\alpha} & (A,\cdot)}
\end{equation}
\begin{definition}
A pre-log structure $(M,\alpha)$ on a simplicial commutative ring $A$ is a
\emph{log structure} if the top horizontal map in the square~\eqref{eq:log-condition} 
is a weak equivalence in the standard model structure on $s\mathcal M$. In this case,
$(A,M,\alpha)$ is called a \emph{log simplicial ring}. 
\end{definition}

\begin{corollary}
  If $(A,M) \to (B,N)$ is a weak equivalence of pre-log simplicial
  rings, then $(A,M)$ is a log simplicial ring if and only if $(B,N)$
  is.
\end{corollary}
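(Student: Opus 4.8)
The plan is to show that the pullback square~\eqref{eq:log-condition} defining the log condition is actually a homotopy pullback square, and then to deduce that the log condition transports across the weak equivalence $(A,M)\to (B,N)$ by the homotopy invariance of homotopy pullbacks. Recall that such a weak equivalence consists of a pair $(f,f^{\flat})$ with $f\colon A\to B$ a weak equivalence in $s\mathcal R$ and $f^{\flat}\colon M\to N$ a weak equivalence in $s\mathcal M$. Write the structure maps as $\alpha\colon M\to (A,\cdot)$ and $\beta\colon N\to (B,\cdot)$, and let $p\colon \alpha^{-1}((A,\cdot)^{\times})\to (A,\cdot)^{\times}$ and $q\colon \beta^{-1}((B,\cdot)^{\times})\to (B,\cdot)^{\times}$ be the two top horizontal maps. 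The task is to prove that $p$ is a weak equivalence if and only if $q$ is.

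The first and main step is to observe that the inclusion $(A,\cdot)^{\times}\to (A,\cdot)$ is a fibration in $s\mathcal M$. Since the vertices of any simplex lie in a single path component, the condition that all vertices represent units in $\pi_0(A)$ is a condition on the path component, so $(A,\cdot)^{\times}$ is precisely the union of those path components of $(A,\cdot)$ indexed by $(\pi_0 A)^{\times}\subseteq \pi_0(A)$. The inclusion of a union of path components has the right lifting property against every horn inclusion $\Lambda^n_k\to\Delta^n$: the image of $\Delta^n$ lies in one path component of $(A,\cdot)$, and because the nonempty horn maps into $(A,\cdot)^{\times}$, that component already lies in $(A,\cdot)^{\times}$, giving the lift. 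Hence the inclusion is a Kan fibration, and as fibrations in $s\mathcal M$ are detected on underlying simplicial sets by Proposition~\ref{prop:model-str-on-sR-and-sM}, it is a fibration in $s\mathcal M$. Right properness of $s\mathcal M$ then makes~\eqref{eq:log-condition} a homotopy pullback square, and likewise for the square attached to $(B,N)$.

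Next I would verify that the evident map from the first square to the second is a weak equivalence on three of the four corners. The map $M\to N$ is a weak equivalence by hypothesis, and $(A,\cdot)\to (B,\cdot)$ is one because $(A,\cdot)$ and $A$ share the same underlying simplicial set and $f$ is a weak equivalence in $s\mathcal R$. For the unit monoids, $\pi_0(f)\colon \pi_0(A)\to\pi_0(B)$ is an isomorphism of commutative rings, so it restricts to a bijection $(\pi_0 A)^{\times}\cong (\pi_0 B)^{\times}$ and reflects units; thus $(A,\cdot)^{\times}$ and $(B,\cdot)^{\times}$ are unions of path components matched to one another under $\pi_0(f)$. A weak equivalence of simplicial sets restricts to a weak equivalence between such matched unions of components, so $(A,\cdot)^{\times}\to (B,\cdot)^{\times}$ is a weak equivalence as well.

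Finally, homotopy invariance of homotopy pullbacks forces the induced map on the pullback corners $\alpha^{-1}((A,\cdot)^{\times})\to \beta^{-1}((B,\cdot)^{\times})$ to be a weak equivalence. The maps $p$ and $q$ then sit in a commuting square whose two vertical maps---this last map and $(A,\cdot)^{\times}\to (B,\cdot)^{\times}$---are weak equivalences, so the two-out-of-three property shows that $p$ is a weak equivalence if and only if $q$ is, which is exactly the claim that $(A,M)$ is a log simplicial ring if and only if $(B,N)$ is. I expect the only real obstacle to be the first step: identifying the component inclusion $(A,\cdot)^{\times}\to (A,\cdot)$ as a fibration so that~\eqref{eq:log-condition} genuinely computes a homotopy pullback. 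Once that is secured, the remaining arguments are formal manipulations with weak equivalences.
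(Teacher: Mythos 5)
Your proposal is correct and takes essentially the same route as the paper: the paper's entire proof is the one-line remark that the inclusion of path components $(A,\cdot)^{\times}\to(A,\cdot)$ is a fibration of simplicial sets, which is exactly your main step, making the square~\eqref{eq:log-condition} a homotopy pullback. The rest of your argument (weak equivalences on the three remaining corners, homotopy invariance of homotopy pullbacks, two-out-of-three) is precisely what the paper leaves implicit.
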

\begin{proof}
This uses that the inclusion of path components is a fibration of simplicial sets.
\end{proof}
\begin{remark}
  While a pre-log simplicial ring is the same as simplicial object in
  the category of pre-log rings, it is not true that a log simplicial
  ring is a simplicial object in the category of log rings: Already in
  simplicial degree $0$, the monoid $((A,\cdot)^{\times})_0$ does not
  need to coincide with its submonoid $((A,\cdot)_0)^{\times}$. The
  homotopy invariance statement of the previous corollary would not
  hold if the log condition was defined using the degree-wise units.
\end{remark}
\begin{construction}\label{constr:logification}
  If $(A,M)$ is a pre-log simplicial ring, then we may factor the top
  horizontal map in the square~\eqref{eq:log-condition} as a
  cofibration $\alpha^{-1}((A,\cdot)^{\times}) \to G$ followed by an
  acyclic fibration $G \to (A,\cdot)^{\times}$ with respect to the
  standard model structure. The pushout $M^a = M
  \coprod_{\alpha^{-1}((A,\cdot)^{\times})}G$ of the resulting diagram
  in $s\mathcal M$ comes with a canonical map $\alpha^a\colon M^a \to
  (A,\cdot)$.

  The induced map $(\alpha^a)^{-1}((A,\cdot)^{\times}) \to (A,\cdot)^{\times}$ is
  isomorphic to $G \to (A,\cdot)^{\times}$. Hence $(M^a,
  \alpha^a)$ is a log structure on $A$.  We call it the
  \emph{associated log structure} of $(M,A)$ and refer to
  $(A,M^a,\alpha^a)$ as the \emph{logification} of $(A,M,\alpha)$.

  The logification comes with a natural map $(A,M,\alpha) \to
  (A,M^a,\alpha^a)$.  The use of the relative cofibrant replacement of
  $(A,\cdot)^{\times}$ and the left properness of $s\mathcal M$ ensures
  that logification preserves weak equivalences.
\end{construction}
\begin{lemma}
  If $(A, M,\alpha)$ is a log simplicial ring, then $(A,M,\alpha) \to
  (A,M^a,\alpha^a)$ is a weak equivalence.
\end{lemma}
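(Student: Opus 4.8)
The plan is to notice that the map $(A,M,\alpha)\to(A,M^a,\alpha^a)$ has the identity as its ring component, since logification does not change the underlying simplicial commutative ring $A$. Because a map of simplicial pre-log rings is a weak equivalence precisely when both its ring component and its monoid component are weak equivalences, the entire statement reduces to showing that the monoid component $M\to M^a$ is a weak equivalence in the standard model structure on $s\mathcal M$.

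Next I would unwind the pushout defining $M^a$. By Construction~\ref{constr:logification}, the top map of the square~\eqref{eq:log-condition} is factored as a cofibration $\alpha^{-1}((A,\cdot)^{\times})\to G$ followed by an acyclic fibration $G\to(A,\cdot)^{\times}$, and one sets $M^a=M\coprod_{\alpha^{-1}((A,\cdot)^{\times})}G$. By the universal property of this pushout, the canonical map $M\to M^a$ is exactly the cobase change of the cofibration $\alpha^{-1}((A,\cdot)^{\times})\to G$ along the structure map $\alpha^{-1}((A,\cdot)^{\times})\to M$. So it remains to understand this one cofibration.

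The key step is to upgrade $\alpha^{-1}((A,\cdot)^{\times})\to G$ to an acyclic cofibration by feeding in the log hypothesis. Since $(A,M,\alpha)$ is a log simplicial ring, the top map $\alpha^{-1}((A,\cdot)^{\times})\to(A,\cdot)^{\times}$ is a weak equivalence; as $G\to(A,\cdot)^{\times}$ is an acyclic fibration, the two-out-of-three property forces $\alpha^{-1}((A,\cdot)^{\times})\to G$ to be a weak equivalence as well, hence an acyclic cofibration. Acyclic cofibrations are stable under cobase change in any model category, so $M\to M^a$ is itself an acyclic cofibration, in particular a weak equivalence, which finishes the proof.

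I do not expect a serious obstacle here; the one point worth being careful about is that we are pushing out a \emph{genuine} acyclic cofibration, so the conclusion follows from the standard closure of acyclic cofibrations under pushout and does not require left properness of $s\mathcal M$ (which would only give stability of weak equivalences under pushout along cofibrations). The only thing to double-check is that the two-out-of-three step is applied to the correct edge of the factorization, i.e.\ that the log condition is exactly what pins down $\alpha^{-1}((A,\cdot)^{\times})\to G$ as acyclic.
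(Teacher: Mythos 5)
Your proof is correct and follows essentially the same route as the paper: the log condition together with two-out-of-three (against the acyclic fibration $G \to (A,\cdot)^{\times}$) shows that the cofibration $\alpha^{-1}((A,\cdot)^{\times}) \to G$ is acyclic, and $M \to M^a$ is its cobase change, hence a weak equivalence. Your spelled-out details — reducing to the monoid component and noting that pushout of a genuine acyclic cofibration needs no left properness — are exactly what the paper's terse ``this implies'' leaves implicit.
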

\begin{proof}
  If $(M,\alpha)$ is a log structure, then
  $\alpha^{-1}((A,\cdot)^{\times}) \to G$ is a weak equivalence. This
  implies that $(A,M,\alpha) \to (A,M^a,\alpha^a)$ is a weak
  equivalence.
\end{proof}

\subsection{The log model structure}\label{subsec:log-model}
Our next aim is to express the log condition and the logification 
in terms of model structures. 

\begin{lemma}\label{lem:log-by-lifting-property}
Let $(A,M)$ be fibrant in the injective pre-log model structure.  Then  $(A,M)$ is a log ring if and only 
if for every cofibration $K \to L$ in $s\mathcal M$ with $L$ grouplike,  every commutative 
square 
\[\xymatrix@-1pc{ K \ar[r] \ar[d] & M \ar[d] \\ L \ar[r] & (A,\cdot)} \]
in $s\mathcal M$ admits a lift $L \to M$ making both triangles 
commutative.  \end{lemma}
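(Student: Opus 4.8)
The plan is to rephrase everything in terms of the top horizontal map
\[ p\colon P:=\alpha^{-1}((A,\cdot)^{\times})\longrightarrow (A,\cdot)^{\times} \]
of the square~\eqref{eq:log-condition}, which by definition is a weak equivalence exactly when $(A,M)$ is a log ring. Since $(A,M)$ is pre-fibrant, $\alpha$ is a fibration in $s\mathcal M$, and $(A,\cdot)^{\times}\hookrightarrow(A,\cdot)$ is a fibration because it is the inclusion of a union of path components; hence the pullback $p$ is again a fibration. The first step is the observation that a map $L\to(A,\cdot)$ out of a grouplike $L$ factors uniquely through $(A,\cdot)^{\times}$, since $\pi_0(L)$ is a group and its image consists of invertible classes. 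Combining this with the pullback description $P=M\times_{(A,\cdot)}(A,\cdot)^{\times}$, I would check that a lifting square for $\alpha$ with grouplike lower-left corner $L$ decomposes into a lifting square for $p$ and conversely, so that \emph{the property in the statement is equivalent to $p$ having the right lifting property against every cofibration $K\to L$ in $s\mathcal M$ with $L$ grouplike.}

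Given this reduction the forward direction is immediate: if $(A,M)$ is a log ring then $p$ is an acyclic fibration and hence lifts against all cofibrations, in particular those with grouplike target.

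For the converse I would extract two consequences from the lifting hypothesis on $p$. First, for any cofibrant grouplike $L$ and any $m\ge 0$ the cofibration $L\otimes\partial\Delta^{m}\to L\otimes\Delta^{m}$ has target weakly equivalent to $L$, hence grouplike, so $p$ lifts against it; by the usual tensor--cotensor adjunction this says that $\bfHom(L,P)\to\bfHom(L,(A,\cdot)^{\times})$ is an acyclic fibration of simplicial sets for every grouplike cofibrant $L$. Second, applying the hypothesis to the cofibration $\xi\colon C\to C'$ of~\eqref{eq:definition-xi}, whose target $C'$ is grouplike: a vertex of $P$ is a map $C\to P$, its image in $(A,\cdot)^{\times}$ extends over $\xi$ because $(A,\cdot)^{\times}$ is $\xi$-local by Lemma~\ref{lem:char-local}, and any lift $C'\to P$ lands in the invertible path components since $C'$ is grouplike. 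Running this over all vertices forces \emph{$P$ itself to be grouplike.} With $P$ and $(A,\cdot)^{\times}$ now both fibrant and grouplike, they are fibrant in the group completion model structure of Proposition~\ref{prop:group-completion-model-str}; mapping into such $\xi$-local objects only depends on the group completion of the source, so the equivalences of the first point upgrade from grouplike cofibrant $L$ to all cofibrant $L$, and it follows that $p$ is a weak equivalence. Thus $(A,M)$ is a log ring.

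The step I expect to be the main obstacle is the converse, and specifically the interplay just described: the grouplike test objects appearing in the hypothesis only probe the invertible (grouplike) part of $P$, so the lifting property does not directly present $p$ as an acyclic fibration in the standard model structure. The resolution is to treat the two features separately---using the single localizing cofibration $\xi$ to force $P$ to be grouplike, and the tensored cofibrations $L\otimes\partial\Delta^{m}\to L\otimes\Delta^{m}$ to produce the mapping-space equivalences---and then to combine them inside the group completion model structure, where both $P$ and $(A,\cdot)^{\times}$ have become fibrant.
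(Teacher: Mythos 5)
Your proposal is correct: the reduction to the pullback map $p\colon \alpha^{-1}((A,\cdot)^{\times})\to(A,\cdot)^{\times}$ of the square~\eqref{eq:log-condition} and the forward direction (an acyclic fibration lifts against all cofibrations) agree with the paper, but your converse takes a genuinely different route. The paper stays entirely at the level of lifting problems: given a square over $p$ for an arbitrary generating cofibration $K\to L$, it extends the bottom map $L\to(A,\cdot)^{\times}$ over the group completion $L\to L^{\gp}$ (using that $(A,\cdot)^{\times}$ is fibrant and grouplike and that $L\to L^{\gp}$ is a cofibration), applies the hypothesis to the composite cofibration $K\to L^{\gp}$, and restricts the resulting lift back along $L\to L^{\gp}$; this exhibits $p$ directly as an acyclic fibration, and never needs to know that $\alpha^{-1}((A,\cdot)^{\times})$ is grouplike. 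You instead split the hypothesis into two separate consequences---grouplikeness of $\alpha^{-1}((A,\cdot)^{\times})$, extracted from the single cofibration $\xi$ of~\eqref{eq:definition-xi} together with Lemma~\ref{lem:char-local}, and acyclic fibrations of mapping spaces $\bfHom(L,-)$ for grouplike cofibrant $L$, extracted from the tensored cofibrations---and then recombine them inside the group completion model structure of Proposition~\ref{prop:group-completion-model-str}, using SM7 for the localized structure applied to $L\to L^{\gp}$ and a Yoneda-type detection of weak equivalences by derived mapping spaces. Both arguments run on the same fuel (extension of maps into fibrant grouplike objects over group completions, and cofibrancy of $L\to L^{\gp}$), but the paper's version is shorter, more elementary, and yields in one stroke the stronger conclusion that $p$ is an acyclic fibration; yours is heavier on localization machinery, in exchange for making explicit the conceptual point that the hypothesis identifies $p$ as a map between fibrant objects of the group completion model structure which is forced to be an equivalence there.
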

\begin{proof}
Let $(A,M)$ be a pre-fibrant log simplicial ring. Then $L \to (A,\cdot)$ factors through
the inclusion $(A,\cdot)^{\times} \to (A,\cdot)$ because $L$ is grouplike, and there exists a lifting
in the resulting square 
\begin{equation}\label{eq:log-by-lifting-property}\xymatrix@-1pc{ K \ar[r] \ar[d] & \alpha^{-1}((A,\cdot)^{\times}) \ar[d] \\ L \ar[r] & (A,\cdot)^{\times}} \end{equation}
because $K \to L$ is cofibration and $\alpha^{-1}((A,\cdot)^{\times}) \to (A,\cdot)^{\times}$ is an acyclic
fibration. Composing with $ \alpha^{-1}((A,\cdot)^{\times}) \to M$ gives the desired lift. 

For the converse, it is enough to show that for every generating
cofibration $K \to L$ in the standard model structure on $s\mathcal M$
and every square of the form~\eqref{eq:log-by-lifting-property} there
exists a lift $L \to \alpha^{-1}((A,\cdot)^{\times})$. Since
$(A,\cdot)^{\times}$ is grouplike, the map $L\to (A,\cdot)^{\times}$
extends over the group completion $L \to L^{\gp}$. The
composed map $K \to L^{\gp}$ lifts against $M \to
(A,\cdot)$. This provides a map $L^{\gp} \to M$ whose
composite with $L\to L^{\gp}$, in combination with $L \to (A,\cdot)^{\times}$, induces the desired lifting
$L \to \alpha^{-1}((A,\cdot)^{\times})$.
\end{proof}

Every map $K \to L$ in $s\mathcal M$ gives rise to a pre-log simplicial ring $(\mathbb Z[L],K)$ 
and a canonical map $(\mathbb Z[L],K)\to(\mathbb Z[L],L)$ in $s\mathcal P$. An adjunction argument
shows
\begin{lemma}\label{lem:comparing-lifting-conditions}
Let $K \to L$ be a map in $s\mathcal M$, let $(A,M) \to (B,N)$ be a map in $s\mathcal P$, and
consider commutative squares
\[\xymatrix@-1pc{(\mathbb Z[L],K) \ar[r] \ar[d] &(A,M)\ar[d] \\(\mathbb Z[L],L)\ar[r] &(B,N)}
\qquad\text{and}\qquad\xymatrix@-1pc{ K \ar[r] \ar[d] & M \ar[d] \\ L \ar[r] & N\times_{(B,\cdot)}(A,\cdot)}
\]
in $s\mathcal P$ and  $s\mathcal M$. Then the universal property of $\mathbb Z[-]$ induces a  one-to-one correspondence between commutative squares of the first and second type, and the first square admits a lift 
if and only if the second does. \qed
\end{lemma}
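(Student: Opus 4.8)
The plan is to prove the statement by unwinding the defining data of morphisms in $s\mathcal P$ through the adjunction $\mathbb Z[-]\dashv(-,\cdot)$ of~\eqref{eq:sM-sR-adjunction}; throughout I write $\alpha\colon M\to(A,\cdot)$ and $\beta\colon N\to(B,\cdot)$ for the two structure maps and $(f,f^{\flat})$ for the given morphism. The basic dictionary is that, for any pre-log simplicial ring $(A,M,\alpha)$, a morphism $(\mathbb Z[L],K)\to(A,M)$ in $s\mathcal P$ is the same datum as a commutative square
\[\xymatrix@-1pc{ K \ar[r]^{h} \ar[d] & M \ar[d]^{\alpha} \\ L \ar[r]^-{\tilde g} & (A,\cdot)} \]
in $s\mathcal M$: the ring component $\mathbb Z[L]\to A$ is adjoint to a monoid map $\tilde g\colon L\to(A,\cdot)$, the monoid component is a map $h\colon K\to M$, and — using that the structure map of $(\mathbb Z[L],K)$ is the composite of $K\to L$ with the adjunction unit $L\to(\mathbb Z[L],\cdot)$ — the pre-log compatibility condition unwinds to $\alpha h=\tilde g|_K$. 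In the same way a morphism $(\mathbb Z[L],L)\to(A,M)$ is the same as a single monoid map $L\to M$, since the identity on the $L$-component forces the ring component to be adjoint to the composite of $L\to M$ with $\alpha$.

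With this dictionary I would unwind a square of the first type into a triple $(h\colon K\to M,\ \tilde g\colon L\to(A,\cdot),\ \phi\colon L\to N)$, where $h,\tilde g$ come from the top morphism and $\phi$ from the bottom morphism $(\mathbb Z[L],L)\to(B,N)$. Commutativity of the outer square in $s\mathcal P$ then amounts to exactly three equations: $\alpha h=\tilde g|_K$ (compatibility of the top morphism), $f^{\flat}h=\phi|_K$ (monoid part of the outer commutativity), and $(f,\cdot)\tilde g=\beta\phi$ (ring part). The point is that these are precisely the equations cut out by a square of the second type. Indeed, a map $L\to N\times_{(B,\cdot)}(A,\cdot)$ is exactly a pair $(\phi,\tilde g)$ with $\beta\phi=(f,\cdot)\tilde g$, the induced right-hand map $M\to N\times_{(B,\cdot)}(A,\cdot)$ is the pair $(f^{\flat},\alpha)$, and commutativity of the second square records the two remaining equations $f^{\flat}h=\phi|_K$ and $\alpha h=\tilde g|_K$. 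Matching the two lists gives the asserted bijection on squares.

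For the lifting clause I would run the same bookkeeping one step further. A lift in the first square is a morphism $(\mathbb Z[L],L)\to(A,M)$, hence a monoid map $\ell\colon L\to M$, and commutativity of the upper and lower triangles translates into $\ell|_K=h$, $\alpha\ell=\tilde g$, and $f^{\flat}\ell=\phi$, the ring part of the lower triangle being automatic from $\beta f^{\flat}=(f,\cdot)\alpha$. A lift in the second square is likewise a monoid map $\ell\colon L\to M$, and commutativity of its two triangles against $M\to N\times_{(B,\cdot)}(A,\cdot)$ yields $\ell|_K=h$ together with $(f^{\flat}\ell,\alpha\ell)=(\phi,\tilde g)$, i.e.\ the same three equations. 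Thus the same $\ell$ serves on both sides, lifts correspond bijectively, and in particular one square admits a lift if and only if the other does.

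I do not expect a genuine obstacle, since the argument is pure adjunction bookkeeping. The only point requiring care is to keep the ring-direction and monoid-direction compatibilities apart and to verify that the pullback $N\times_{(B,\cdot)}(A,\cdot)$ is engineered so that a single map into it simultaneously encodes the pre-log compatibility of the top morphism and the outer commutativity of the first square. Once this is checked, the equality of the two systems of equations is immediate for both squares and lifts.
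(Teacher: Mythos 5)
Your proposal is correct, and it is exactly the adjunction bookkeeping that the paper itself invokes (the paper states the lemma with "an adjunction argument shows" and omits the details): unwinding morphisms out of $(\mathbb Z[L],K)$ and $(\mathbb Z[L],L)$ via $\mathbb Z[-]\dashv(-,\cdot)$, identifying both types of squares with triples $(h,\tilde g,\phi)$ satisfying the same three equations, and checking that lifts on either side are the same monoid maps $\ell\colon L\to M$ (with the ring part of the lower triangle automatic from $\beta f^{\flat}=(f,\cdot)\alpha$). Nothing is missing.
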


Let $I$ be the set of generating cofibrations for the standard model structure on $s\mathcal M$,
and let  
\[ S = \{(\mathbb Z[L^{\gp}],K)\to(\mathbb Z[L^{\gp}],L^{\gp}) \;|\; (K \to L^{\gp}) = (K \xrightarrow{f} L \to L^{\gp}) \; \text{ where }\; f \in I\} \]
be set of maps in $s\mathcal P$ obtained by group-completing the codomains of the generating
cofibrations for $s\mathcal M$ and forming the associated maps of pre-log simplicial rings.  

We will say that a map of pre-log simplicial rings is a \emph{log
  equivalence} if it induces a weak equivalence after logification,
and a \emph{log cofibration} if it is a cofibration in the injective pre-log
model structure of Proposition~\ref{prop:inj-pre-log-model-str}. Moreover,
a pre-log simplicial ring is \emph{log fibrant} if it is a pre-fibrant log
simplicial ring.
\begin{theorem}\label{thm:log-model-str}
  The log equivalences and the log cofibrations are the weak
  equivalences and cofibrations of a left proper simplicial cellular
  \emph{log model structure} on the category of simplicial pre-log
  rings $s\mathcal P$. The log fibrant objects are the fibrant objects
  in this model structure.
\end{theorem}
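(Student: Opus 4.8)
The plan is to construct the log model structure as the left Bousfield localization of the injective pre-log model structure $s\mathcal P^{\mathrm{inj}}$ at the set $S$. Since $s\mathcal P^{\mathrm{inj}}$ is left proper, simplicial, and cellular by Proposition~\ref{prop:inj-pre-log-model-str}, Hirschhorn's localization theorem \cite[Theorem 4.1.1]{Hirschhorn_model} immediately produces a left proper simplicial cellular model structure $L_S s\mathcal P^{\mathrm{inj}}$ whose cofibrations coincide with the injective cofibrations, i.e.\ with the log cofibrations. It then remains to identify the fibrant objects of $L_S s\mathcal P^{\mathrm{inj}}$ with the log fibrant objects and the $S$-local equivalences with the log equivalences.

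For the fibrant objects, first I would observe that each map in $S$ is a cofibration between cofibrant objects: the codomains $L$ of the generating cofibrations of $s\mathcal M$ are free, hence cofibrant, so $L^{\gp}$ is cofibrant (the group completion $L \to L^{\gp}$ being a cofibration by Proposition~\ref{prop:group-completion-model-str}), and $\mathbb Z[-]$ is left Quillen; thus $(\mathbb Z[L^{\gp}], K)$ and $(\mathbb Z[L^{\gp}],L^{\gp})$ are injectively cofibrant. Consequently an object $(A,M)$ is $S$-local exactly when it is pre-fibrant and each induced map $\bfHom((\mathbb Z[L^{\gp}],L^{\gp}),(A,M)) \to \bfHom((\mathbb Z[L^{\gp}],K),(A,M))$ is an acyclic fibration. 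Applying Lemma~\ref{lem:comparing-lifting-conditions} with target the terminal pre-log ring translates the lifting problems encoded by this acyclic fibration into precisely the lifting problems of Lemma~\ref{lem:log-by-lifting-property} for the maps $K \to L^{\gp}$; since their codomains $L^{\gp}$, and more generally the codomains $L^{\gp}\otimes\Delta^n$ of the relevant pushout--product maps, are grouplike, Lemma~\ref{lem:log-by-lifting-property} identifies the $S$-local objects with the pre-fibrant log simplicial rings, that is, with the log fibrant objects.

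The main work is to show that the $S$-local equivalences are exactly the log equivalences, and the crux is the claim that the logification map $(A,M) \to (A,M^a)$ of Construction~\ref{constr:logification} is an $S$-local equivalence. To prove this I would test against an arbitrary log fibrant $(C,P)$ and use that $M^a$ is the pushout $M\coprod_{\alpha^{-1}((A,\cdot)^{\times})} G$, so that the induced map of mapping spaces is, fiberwise over $\bfHom_{s\mathcal R}(A,C)$, a base change of $\bfHom(G,P) \to \bfHom(\alpha^{-1}((A,\cdot)^{\times}),P)$ computed relative to $(C,\cdot)$. The key point is that both $G$ and $\alpha^{-1}((A,\cdot)^{\times})$ map to $(C,\cdot)$ through the units $(C,\cdot)^{\times}$, so these relative mapping spaces are mapping spaces into $\alpha_P^{-1}((C,\cdot)^{\times})$ over $(C,\cdot)^{\times}$; since $(C,P)$ is pre-fibrant and log, the map $\alpha_P^{-1}((C,\cdot)^{\times}) \to (C,\cdot)^{\times}$ is an acyclic fibration, and pairing it with the cofibration $\alpha^{-1}((A,\cdot)^{\times}) \to G$ via the simplicial model structure shows the comparison map is an acyclic fibration. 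Hence mapping into log fibrant objects is insensitive to logification, which is exactly the asserted $S$-local equivalence. I expect this mapping-space computation---correctly tracking the structure maps to $(C,\cdot)$ and the fibrancy hypotheses---to be the main obstacle.

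Granting the claim, the identification of weak equivalences follows formally. For any $f\colon (A,M)\to(B,N)$ the naturality square of logification together with two-out-of-three reduces the question to $f^a\colon (A,M^a)\to(B,N^a)$, a map between log simplicial rings. Passing to pre-fibrant replacements---which preserve the log condition by the corollary following the definition of log structures, hence land in $S$-local objects---and using that among $S$-local objects the $S$-local equivalences are precisely the injective weak equivalences, one finds that $f^a$ is an $S$-local equivalence if and only if it is an injective weak equivalence. By the definition of log equivalence this is exactly the condition that $f$ be a log equivalence, completing the identification and hence the proof.
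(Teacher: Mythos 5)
Your proposal is correct and follows essentially the same route as the paper: it too constructs the log model structure as the left Bousfield localization of $s\mathcal P^{\mathrm{inj}}$ at $S$ via~\cite[Theorem 4.1.1]{Hirschhorn_model}, identifies the $S$-local objects with the pre-fibrant log simplicial rings through Lemmas~\ref{lem:log-by-lifting-property} and~\ref{lem:comparing-lifting-conditions} (this is its Lemma~\ref{lem:log-as-local-objects}), and identifies the $S$-local equivalences with the log equivalences by showing that logification is an $S$-local equivalence (its Lemma~\ref{lem:logification-local-equivalence}) and invoking~\cite[Theorem 3.2.18]{Hirschhorn_model}. The only divergence is in the proof of the logification claim, and it is cosmetic: where you compute the map of mapping spaces directly as a base change of an SM7 pullback-hom against the acyclic fibration $\alpha_P^{-1}((C,\cdot)^{\times})\to(C,\cdot)^{\times}$, the paper exhibits logification as a cobase change of $(\mathbb Z[G],\alpha^{-1}((A,\cdot)^{\times}))\to(\mathbb Z[G],G)$ and reduces, via Lemma~\ref{lem:comparing-lifting-conditions}, to the lifting criterion of Lemma~\ref{lem:log-by-lifting-property} using that $G$ is grouplike---the same underlying mechanism of playing the cofibration $\alpha^{-1}((A,\cdot)^{\times})\to G$ against the acyclic fibration furnished by the log condition.
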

We write $s\mathcal L$ for this model category. By slight abuse of
language, we refer to it as the model category of \emph{log simplicial
  rings}.
\begin{proof}
  The log model structure is defined to be the left Bousfield
  localization of the injective pre-log model structure with respect to $S$.
  Its existence and most of its properties are provided
  by~\cite[Theorem
  4.1.1]{Hirschhorn_model}. Lemma~\ref{lem:log-as-local-objects}
  provides the characterization of the fibrant objects, and
  Lemma~\ref{lem:logification-local-equivalence} and~\cite[Theorem
  3.2.18]{Hirschhorn_model} provide the characterization of the weak
  equivalences.
\end{proof}

\begin{lemma}\label{lem:log-as-local-objects}
A pre-log simplicial ring $(A,M)$ is $S$-local if and only if it is a pre-fibrant log simplicial ring.
\end{lemma}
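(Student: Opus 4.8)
The plan is to unwind the definition of an $S$-local object into a right lifting property in $s\mathcal P$ and then to match this lifting property with the characterization of log rings provided by Lemma~\ref{lem:log-by-lifting-property}. First I would record the shape of the maps in $S$: each is a cofibration in the injective pre-log model structure whose ring component is an identity and whose monoid component $K \to L^{\gp}$ is a cofibration with grouplike codomain. Since the notion of local object requires $(A,M)$ to be fibrant in the unlocalized structure, being $S$-local means exactly that $(A,M)$ is pre-fibrant and that for every $g\colon U\to V$ in $S$ the induced map $\bfHom(V,(A,M)) \to \bfHom(U,(A,M))$ is an acyclic fibration of simplicial sets. This already pins down the pre-fibrancy half of the statement, so the content is to show that, for a pre-fibrant $(A,M)$, these mapping-space conditions hold if and only if $(A,M)$ is a log ring.

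The key translation step is to detect these acyclic fibrations by the right lifting property against the boundary inclusions $\partial\Delta^n \to \Delta^n$. By the simplicial adjunction this is equivalent to requiring that $(A,M) \to *$ have the right lifting property against the pushout-product maps $g\,\square\,(\partial\Delta^n \to \Delta^n)$ for all $g\in S$ and all $n\ge 0$. Using $(A',M')\otimes\Delta^n \cong (A'\otimes\Delta^n, M'\otimes\Delta^n)$ together with the fact that $\mathbb Z[-]$ preserves coproducts, I would compute that such a pushout-product is again a map of the form $(\mathbb Z[L'],P) \to (\mathbb Z[L'],L')$, where $L' = L^{\gp}\otimes\Delta^n$ and $P \to L'$ is the pushout-product of $K\to L^{\gp}$ with $\partial\Delta^n \to \Delta^n$ in $s\mathcal M$. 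This map is a cofibration by the pushout-product axiom, and its codomain $L'$ is grouplike because $L^{\gp}$ is cofibrant and grouplike and $\Delta^n \to \Delta^0$ is a weak equivalence. Lemma~\ref{lem:comparing-lifting-conditions}, applied with target $(A,M)\to *$, then converts each lifting problem in $s\mathcal P$ into a lifting problem in $s\mathcal M$ of precisely the shape appearing in Lemma~\ref{lem:log-by-lifting-property}, namely a square whose left vertical map is the cofibration $P \to L'$ with $L'$ grouplike and whose bottom map lands in $(A,\cdot)$.

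With this dictionary in place, both implications follow. If $(A,M)$ is a pre-fibrant log simplicial ring, then Lemma~\ref{lem:log-by-lifting-property} supplies a lift in every such square, so all the pushout-product lifting problems are solvable; hence every map $\bfHom(V,(A,M))\to\bfHom(U,(A,M))$ is an acyclic fibration and $(A,M)$ is $S$-local. Conversely, if $(A,M)$ is $S$-local, then it is in particular pre-fibrant, and the instances $n=0$ (for which $g\,\square\,(\partial\Delta^0\to\Delta^0)=g$, so $P=K$ and $L'=L^{\gp}$) produce lifts of $M \to (A,\cdot)$ against the group-completed generating cofibrations $K\to L^{\gp}$ with $f\in I$. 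As in the converse part of the proof of Lemma~\ref{lem:log-by-lifting-property}, this is exactly what is needed to show that $\alpha^{-1}((A,\cdot)^{\times}) \to (A,\cdot)^{\times}$ has the right lifting property against all of $I$ and is therefore an acyclic fibration; hence $(A,M)$ is a log ring.

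The step I expect to require the most care is the pushout-product bookkeeping: verifying that tensoring the maps in $S$ with the boundary inclusions and forming pushout-products stays inside the class of pre-log maps $(\mathbb Z[L'],P)\to(\mathbb Z[L'],L')$ that is covered by Lemma~\ref{lem:comparing-lifting-conditions}, that the ring components remain identities under the identification $\mathbb Z[L^{\gp}]\otimes\Delta^n\cong\mathbb Z[L^{\gp}\otimes\Delta^n]$, and in particular that the new codomain monoid $L^{\gp}\otimes\Delta^n$ stays grouplike so that Lemma~\ref{lem:log-by-lifting-property} applies. Everything else is a formal manipulation of the simplicial mapping spaces and the free-forgetful adjunction $\mathbb Z[-]\dashv(-,\cdot)$.
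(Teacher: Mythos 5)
Your proposal is correct and takes essentially the same route as the paper: both reduce $S$-locality to the right lifting property of $(A,M)\to *$ against the pushout products of the maps in $S$ with the boundary inclusions $\partial\Delta^n\to\Delta^n$ (the paper invokes \cite[Proposition 4.2.4]{Hirschhorn_model}, you derive this directly from the simplicial adjunction), identify these pushout products as maps $(\mathbb Z[L'],P)\to(\mathbb Z[L'],L')$ with $L'=L^{\gp}\otimes\Delta^n$ grouplike, and then conclude via Lemmas~\ref{lem:comparing-lifting-conditions} and~\ref{lem:log-by-lifting-property}. Your treatment of the converse--using the $n=0$ instances and rerunning the converse argument of Lemma~\ref{lem:log-by-lifting-property}--is exactly what the paper's proof does as well.
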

\begin{proof}
  Let $(A,M)$ be a pre-fibrant log simplicial
  ring. By~\cite[Proposition 4.2.4]{Hirschhorn_model}, showing that it
  is $S$-local is equivalent to showing that $(A,M) \to *$ has the
  right lifting property with respect to the pushout product map
  \[ (\mathbb Z[L^{\gp}],L^{\gp})\otimes \partial \Delta^n
  \textstyle\coprod_{ (\mathbb Z[K],L^{\gp})\otimes \partial \Delta^n}
  (\mathbb Z[K],L^{\gp})\otimes \Delta^n \to (\mathbb
  Z[L^{\gp}],L^{\gp})\otimes \Delta^n.\] This map is isomorphic to the
  map $(\mathbb Z[L'],K')\to(\mathbb Z[L'],L')$ associated with
  \[ K' = L^{\gp}\otimes \partial \Delta^n \textstyle\coprod_{
    K\otimes \partial \Delta^n} L^{\gp}\otimes \Delta^n \to
  L^{\gp}\otimes \Delta^n = L'.\] Then $L'$ is grouplike because
  $L^{\gp}$ is grouplike and $\Delta^n$ is contractible. Combining
  Lemmas~\ref{lem:log-by-lifting-property} and
  \ref{lem:comparing-lifting-conditions} provides the desired lifting.

  Now assume that $(A,M)$ is $S$-local. Then $(A,M)$ is pre-fibrant by
  definition. Lemma~\ref{lem:comparing-lifting-conditions} and the
  argument given in the proof of
  Lemma~\ref{lem:log-by-lifting-property} show that $(A,M)$ is a log
  simplicial ring.
\end{proof}
The next lemma exhibits the logification of Construction~\ref{constr:logification} as an explicit
fibrant replacement for the log model structure.  
\begin{lemma}\label{lem:logification-local-equivalence}
Let $(A,M)$ be a pre-fibrant pre-log simplicial ring. Then the logification map $(A,M) \to (A,M^a)$ is 
an $S$-local equivalence of pre-log simplicial rings. 
\end{lemma}
\begin{proof}
  Let $\alpha^{-1}((A,\cdot)^{\times}) \to G$ be the cofibration used
  in Construction~\ref{constr:logification}.  Then we can form the
  associated map of pre-log simplicial rings and observe that the
  logification may be obtained as the right vertical map in the
  pushout square
\[\xymatrix@-1pc{
(\mathbb Z[G],  \alpha^{-1}((A,\cdot)^{\times})) \ar[r] \ar[d] & (A,M) \ar[d] \\
(\mathbb Z[G],  G) \ar[r] & (A,M^a). 
}
\] 
It is enough to show that the left hand vertical map is an $S$-local
equivalence. For this we have to verify that it induces a weak
equivalence of simplicial sets when applying the functor
$\bfHom(-,(B,N))$ where $(B,N)$ is a fibrant object in the log
model structure. By adjunction and
Lemma~\ref{lem:comparing-lifting-conditions}, this is equivalent to
showing that
\[ \alpha^{-1}((A,\cdot)^{\times}) \otimes \Delta^n
\textstyle\coprod_{\alpha^{-1}((A,\cdot)^{\times})
  \otimes \partial\Delta^n} G \otimes \partial \Delta^n \to G \otimes
\Delta^n \] has the lifting property against $N \to (B,\cdot)$. Since
$(B,N)$ is log and $G$ is grouplike, this follows from
Lemma~\ref{lem:log-by-lifting-property}.
\end{proof}
The last lemma and the formal properties of a left Bousfield localization easily imply
the following statement. 
\begin{corollary} \label{bousfieldexplained} The homotopy category
  $\mathrm{Ho}(s\mathcal{L})$ is equivalent to the full subcategory of
  $\mathrm{Ho}(s\mathcal{P})$ consisting of log simplicial rings, and
  the logification induces an adjoint pair $(-)^{a}
  :\mathrm{Ho}(s\mathcal{P})\rightleftarrows \mathrm{Ho}(s\mathcal{L})
  : i$ where $i$ is the canonical inclusion functor.
\end{corollary}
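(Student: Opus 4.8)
The plan is to deduce the statement formally from the general theory of left Bousfield localizations, since by Theorem~\ref{thm:log-model-str} the category $s\mathcal{L}$ is the localization of $s\mathcal{P}^{\mathrm{inj}}$ at $S$, and the two substantial inputs have already been supplied by Lemmas~\ref{lem:log-as-local-objects} and~\ref{lem:logification-local-equivalence}. First I would record the Quillen adjunction underlying the localization: because $s\mathcal{L}$ and $s\mathcal{P}^{\mathrm{inj}}$ share the same cofibrations while every weak equivalence of $s\mathcal{P}^{\mathrm{inj}}$ is an $S$-local equivalence, the identity functors form a Quillen adjunction $\mathrm{id}\colon s\mathcal{P}^{\mathrm{inj}} \rightleftarrows s\mathcal{L} \colon \mathrm{id}$ with the indicated direction as left adjoint. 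Deriving it yields an adjunction $\mathbb{L}\mathrm{id}\colon \mathrm{Ho}(s\mathcal{P}) \rightleftarrows \mathrm{Ho}(s\mathcal{L}) \colon \mathbb{R}\mathrm{id}$, and I would set $(-)^{a} := \mathbb{L}\mathrm{id}$ and $i := \mathbb{R}\mathrm{id}$.

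To prove the first assertion I would invoke the standard fact that for a left Bousfield localization the total right derived functor $\mathbb{R}\mathrm{id}$ is fully faithful, exhibiting $\mathrm{Ho}(s\mathcal{L})$ as the reflective full subcategory of $\mathrm{Ho}(s\mathcal{P})$ spanned by the objects that are isomorphic to an $S$-local one (cf.\ \cite{Hirschhorn_model}). By Lemma~\ref{lem:log-as-local-objects} the $S$-local objects are exactly the pre-fibrant log simplicial rings; since the log condition is invariant under weak equivalence, every log simplicial ring is weakly equivalent to a pre-fibrant one, so the essential image of $i$ is precisely the full subcategory of log simplicial rings in $\mathrm{Ho}(s\mathcal{P})$. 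Thus $i$ is the claimed inclusion and restricts to an equivalence of $\mathrm{Ho}(s\mathcal{L})$ onto this subcategory.

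For the second assertion I would identify the left adjoint $(-)^{a}=\mathbb{L}\mathrm{id}$ with logification. This functor is computed by $S$-localization, that is, by fibrant replacement in $s\mathcal{L}$: given a pre-log simplicial ring one first forms a pre-fibrant replacement in $s\mathcal{P}^{\mathrm{inj}}$ and then applies Construction~\ref{constr:logification}. Since logification preserves weak equivalences, this prescription descends to $\mathrm{Ho}(s\mathcal{P})$, and Lemma~\ref{lem:logification-local-equivalence} guarantees that the resulting map is an $S$-local equivalence with log target, so it serves as the unit of the adjunction and exhibits $(-)^{a}$ as logification. The argument is entirely formal, and the only point requiring care is bookkeeping: one must keep the model structures $s\mathcal{P}^{\mathrm{inj}}$ and $s\mathcal{P}^{\mathrm{proj}}$ apart (they are Quillen equivalent, so $\mathrm{Ho}(s\mathcal{P})$ is unambiguous) and, more importantly, remember that logification is shown to be the localization only on pre-fibrant objects; the insertion of a pre-fibrant replacement together with the homotopy invariance of logification is precisely what upgrades it to a description of $\mathbb{L}\mathrm{id}$ on all of $\mathrm{Ho}(s\mathcal{P})$.
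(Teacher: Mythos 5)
Your proposal is correct and takes essentially the same approach as the paper, whose own one-line proof invokes exactly what you spell out: the formal properties of a left Bousfield localization (reflectivity of the local objects, identification of the left adjoint with $S$-localization) combined with Lemmas~\ref{lem:log-as-local-objects} and~\ref{lem:logification-local-equivalence}. The bookkeeping you flag at the end --- inserting a pre-fibrant replacement and using homotopy invariance of logification to extend Lemma~\ref{lem:logification-local-equivalence} from pre-fibrant objects to all of $\mathrm{Ho}(s\mathcal{P})$ --- is precisely the point the paper leaves implicit.
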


\subsection{The replete model structures}
Rognes' notion of \emph{repletion} discussed in Section~\ref{subsec:repletion} can also
be described in terms of appropriate model structures on $s\mathcal P$. 
\begin{proposition}\label{prop:replete-inj-model}
  The category of simplicial pre-log rings $s\mathcal P$ admits a left proper
  simplicial \emph{replete pre-log} model structure where
\begin{itemize}
\item a map $(f,f^{\flat}) \colon (A,M) \to (B,N)$ is cofibration if
  $f$ is a cofibration in $s\mathcal R$ and $f^{\flat}$ is a
  cofibration in $s\mathcal M$ and 
\item an object $(A,M)$ is fibrant if $A$ is fibrant in $s\mathcal R$,
  the structure map $M \to (A,\cdot)$ is a fibration in the standard
  model structure on $s\mathcal M$, and $M$ is grouplike.
\end{itemize}
The forgetful functor $s\mathcal P \to s\mathcal M$ sending $(A,M)$ to
$M$ is a right Quillen functor with respect to the replete pre-log
model structure and the group completion model structure on $s\mathcal
M$.
\end{proposition}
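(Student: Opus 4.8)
The plan is to obtain the replete pre-log model structure as a left Bousfield localization of the injective pre-log model structure $s\mathcal P^{\mathrm{inj}}$ of Proposition~\ref{prop:inj-pre-log-model-str}, in exact analogy with the way the group completion model structure on $s\mathcal M$ arose from the standard one by inverting the single map $\xi$ of~\eqref{eq:definition-xi}. The organizing tool is the adjunction
\[ F\colon s\mathcal M \rightleftarrows s\mathcal P \colon V,\qquad F(N)=(\mathbb Z[N],N),\quad V(A,M)=M, \]
where the structure map of $F(N)$ is the adjunction unit $N\to(\mathbb Z[N],\cdot)$. Since $\mathbb Z[-]$ preserves coproducts and $(A,M)\otimes K\cong(A\otimes K,M\otimes K)$, one has $F(N\otimes K)\cong F(N)\otimes K$, so $F\dashv V$ is a simplicial adjunction and $\bfHom_{s\mathcal P}(F(N),(A,M))\cong\bfHom_{s\mathcal M}(N,M)$. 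Because $\mathbb Z[-]$ is left Quillen for the standard structures, and $V$ carries the (acyclic) fibrations of $s\mathcal P^{\mathrm{inj}}$ to (acyclic) fibrations of $s\mathcal M$ (the composite $M\to(A,\cdot)\times_{(B,\cdot)}N\to N$ being a fibration since $(A,\cdot)\to(B,\cdot)$ is), the pair $(F,V)$ is already a Quillen adjunction from the standard structure on $s\mathcal M$ to $s\mathcal P^{\mathrm{inj}}$.

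I would then define the replete pre-log model structure $s\mathcal P^{\mathrm{rep}}$ to be the localization of $s\mathcal P^{\mathrm{inj}}$ at the single map $\zeta=F(\xi)\colon(\mathbb Z[C],C)\to(\mathbb Z[C'],C')$. As $\xi$ is a cofibration between cofibrant objects and $\mathbb Z[-]$ preserves cofibrations, $\zeta$ is a cofibration between cofibrant objects in $s\mathcal P^{\mathrm{inj}}$. Existence of the localization, left properness, the simplicial and cellular structure, and the fact that the cofibrations are unchanged (hence are exactly the pairs $(f,f^\flat)$ with both coordinates standard cofibrations) all follow from~\cite[Theorem 4.1.1]{Hirschhorn_model}, using that $s\mathcal P^{\mathrm{inj}}$ is left proper, simplicial, and cellular.

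The substance of the argument is the identification of the fibrant objects, which are the objects that are both injectively fibrant and $\zeta$-local. Taking $(B,N)=*$ in Proposition~\ref{prop:inj-pre-log-model-str} shows that injective fibrancy of $(A,M)$ amounts to $A$ being fibrant and $M\to(A,\cdot)$ a fibration in the standard structure; as $(A,\cdot)$ is then fibrant, $M$ is fibrant as well. For such $(A,M)$ the simplicial adjunction turns $\zeta$-locality into the statement that $\bfHom_{s\mathcal M}(C',M)\to\bfHom_{s\mathcal M}(C,M)$ is a weak equivalence, that is, that the fibrant monoid $M$ is $\xi$-local; by Lemma~\ref{lem:char-local} this holds precisely when $M$ is grouplike. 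This reproduces the stated description of the fibrant objects.

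Finally, to see that $V$ is right Quillen for the group completion and replete structures, I would descend the Quillen pair $(F,V)$ to the two localizations. The composite of $F$ with the localization functor $s\mathcal P^{\mathrm{inj}}\to s\mathcal P^{\mathrm{rep}}$ is a left Quillen functor $s\mathcal M\to s\mathcal P^{\mathrm{rep}}$ that sends $\xi$ to $\zeta$, which is a weak equivalence in $s\mathcal P^{\mathrm{rep}}$ by construction. Hence by the universal property of the left Bousfield localization~\cite[Proposition 3.3.18]{Hirschhorn_model} it factors as a left Quillen functor out of the group completion model structure on $s\mathcal M$, whose right adjoint is $V$. The one point demanding care—and the step I expect to be the main obstacle—is the fibrant-object identification: one must check that $M$ is fibrant for injectively fibrant $(A,M)$ and reconcile Hirschhorn's homotopy-function-complex notion of locality with the $\bfHom$-based notion of Lemma~\ref{lem:char-local} (legitimate here because $\zeta$ is a cofibration between cofibrant objects and the target is fibrant). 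Once this is settled, the remaining assertions are formal consequences of the localization machinery.
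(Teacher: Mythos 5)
Your proposal is correct and follows essentially the same route as the paper: both form the left Bousfield localization of the injective pre-log model structure at the single map $(\mathbb Z[\xi],\xi)$, identify the fibrant objects via the adjunction isomorphism $\bfHom_{s\mathcal P}((\mathbb Z[\xi],\xi),(A,M))\cong\bfHom_{s\mathcal M}(\xi,M)$ together with Lemma~\ref{lem:char-local}, and deduce the right Quillen statement formally from the fact that $M\mapsto(\mathbb Z[M],M)$ is left adjoint to the forgetful functor. The only cosmetic difference is that you invoke the universal property of left Bousfield localization (Hirschhorn, Proposition 3.3.18) for the final step where the paper cites Theorem 3.3.20; these amount to the same formal argument.
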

\begin{proof}
  We let $\xi \colon C \to C'$ be the map in $s\mathcal M$ introduced
  in \eqref{eq:definition-xi} and form the left Bousfield localization
  of the injective pre-log model structure with respect to the
  associated map $(\mathbb Z[\xi],\xi)$ in $s\mathcal P$. This model
  structure has the same cofibrations as the injective pre-log model
  structure, and the isomorphism $\bfHom_{s\mathcal P}((\mathbb
  Z[\xi],\xi), (A,M)) \cong \bfHom_{s\mathcal M}(\xi,M)$ shows that the
  fibrant objects are the pre-fibrant objects $(A,M)$ with $\xi$-local
  $M$. Hence Lemma~\ref{lem:char-local} provides the characterization
  of the fibrant objects.  Since $M \mapsto (\mathbb Z [M],M)$ is left
  adjoint to the forgetful functor $s\mathcal P \to s\mathcal M$, the
  last statement is a formal consequence of~\cite[Theorem
  3.3.20]{Hirschhorn_model}.
\end{proof}
If $(A,M)$ is a pre-log simplicial ring, then the group completion of
$M$ enables us to form the \emph{trivial locus} $(A[M^{-1}],M^{\gp})=(\mathbb
Z[M^{\gp}]\otimes_{\mathbb Z[M]} A, M^{\gp})$. Up to a pre-fibrant
replacement, this construction can be viewed as a fibrant replacement
in the replete pre-log model structure:

\begin{lemma}\label{lem:trivial-locus-fib-repl} The composite $(A,M) \to (A[M^{-1}], M^{\gp})^{\mathrm{pre-fib}}$ of
  the canonical map $(A,M) \to (A[M^{-1}], M^{\gp})$ with a fibrant
  replacement functor for the injective pre-log model structure
  provides a fibrant replacement functor for the replete pre-log model
  structure.
\end{lemma}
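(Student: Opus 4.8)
The plan is to realize the trivial locus as a pushout along a group completion, so that the stated map becomes an acyclic cofibration in the replete pre-log model structure with fibrant target. Write $U\colon s\mathcal P \to s\mathcal M$, $(A,M)\mapsto M$, for the forgetful functor and $F\colon s\mathcal M \to s\mathcal P$, $M\mapsto(\mathbb{Z}[M],M)$, for its left adjoint. The counit $F(M)\to(A,M)$ has identity monoid component and ring component the adjunct $\mathbb{Z}[M]\to A$ of the structure map $\alpha$. Since colimits in $s\mathcal P$ are formed by taking the colimit of the underlying rings and of the underlying monoids separately, with the structure map induced via the canonical comparison $(A_i,\cdot)\to(\mathrm{colim}\,A_i,\cdot)$, the pushout of
\[
F(M^{\gp}) \longleftarrow F(M) \longrightarrow (A,M)
\]
has monoid part $M^{\gp}\coprod_M M \cong M^{\gp}$ and ring part $\mathbb{Z}[M^{\gp}]\otimes_{\mathbb{Z}[M]}A = A[M^{-1}]$. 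Thus this pushout is precisely the trivial locus $(A[M^{-1}],M^{\gp})$, and the cobase-change map out of $(A,M)$ is the canonical map.

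Next I would show that this cobase-change map is an acyclic cofibration in the replete pre-log model structure. By Proposition~\ref{prop:group-completion-model-str} the group completion $M\to M^{\gp}$ is simultaneously a cofibration and a weak equivalence in the group completion model structure, hence an acyclic cofibration there. By Proposition~\ref{prop:replete-inj-model} the functor $U$ is right Quillen from the replete pre-log to the group completion model structure, so its left adjoint $F$ is left Quillen and carries the acyclic cofibration $M\to M^{\gp}$ to an acyclic cofibration $F(M)\to F(M^{\gp})$ in the replete model structure. Acyclic cofibrations are stable under cobase change in any model category, so the pushout map $(A,M)\to(A[M^{-1}],M^{\gp})$ is again an acyclic cofibration, in particular a weak equivalence in the replete pre-log model structure.

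It then remains to compose with the pre-fibrant replacement and verify fibrancy. The map $(A[M^{-1}],M^{\gp})\to(A[M^{-1}],M^{\gp})^{\mathrm{pre-fib}}$ is a weak equivalence in the injective pre-log model structure, hence also in its left Bousfield localization, so the full composite is a replete weak equivalence. For fibrancy I would observe that the target is pre-fibrant by construction and that its monoid part is weakly equivalent to $M^{\gp}$; since a weak equivalence in $s\mathcal M$ induces an isomorphism on $\pi_0$, this monoid part is again grouplike, so the target is replete-fibrant by the characterization in Proposition~\ref{prop:replete-inj-model}. Functoriality of the trivial locus and of the chosen pre-fibrant replacement makes the whole assignment a fibrant replacement functor.

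The main obstacle is the opening identification of the trivial locus with the categorical pushout in $s\mathcal P$: one must check that the structure map of the pushout agrees with that of $(A[M^{-1}],M^{\gp})$, and this rests on the fact that $(-,\cdot)$ is a right adjoint and therefore does not commute with the pushout of rings. Once this description is secured, every remaining step is a formal consequence of the Quillen adjunction $F\dashv U$, the fact that fibrant replacement in the group completion model structure is an acyclic cofibration, and the stability of acyclic cofibrations under cobase change.
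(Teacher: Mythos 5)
Your proof is correct and follows essentially the same route as the paper: both arguments realize $(A,M) \to (A[M^{-1}],M^{\gp})$ as a cobase change of $(\mathbb{Z}[i],i)$ where $i\colon M \to M^{\gp}$ is an acyclic cofibration in the group completion model structure, use that $M \mapsto (\mathbb{Z}[M],M)$ is left Quillen into the replete pre-log structure (via Proposition~\ref{prop:replete-inj-model}), invoke stability of acyclic cofibrations under cobase change, and then compose with the pre-fibrant replacement. Your write-up simply makes explicit two points the paper leaves implicit, namely the component-wise computation of the pushout in $s\mathcal{P}$ and the verification that the pre-fibrant replacement remains grouplike, both of which you handle correctly.
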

\begin{proof}
  Since $i\colon M \to M^{\gp}$ is an acyclic cofibration in the group
  completion model structure, the associated map $(\mathbb Z[i],i)$ is
  an acyclic cofibration in the replete pre-log model structure.  The map
  $(A,M) \to (A[M^{-1}], M^{\gp})$ is
  a cobase change of this map and hence also an acyclic cofibration in
  the replete pre-log model structure. This implies that the map in question
  is an acyclic cofibration whose codomain is fibrant in the replete pre-log
  model structure.
\end{proof}

As it is often the case with left Bousfield localizations, we don't
have an explicit characterization of general fibrations in the
replete pre-log model structure. However, the replete pre-log model
structure can be used to guarantee exactness on the underlying monoid
map of a fibrant augmented object:
\begin{corollary}
  Let $(A,M)$ be a pre-log simplicial ring and let $s\mathcal
  P_{(A,M)}/{(A,M)}$ be the category of augmented $(A,M)$-algebras
  with the model structure induced by the replete pre-log model
  structure on $s\mathcal P$. If $(A,M) \to (B,N)\to (A,M)$ is
  fibrant in this model category, then the underlying map $N
  \to M$ is exact in the sense of
  Definition~\ref{def:virt-surj-and-exact}.
\end{corollary}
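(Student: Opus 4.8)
The plan is to reduce the assertion to a statement about the underlying map $N \to M$ of simplicial commutative monoids and then to quote Proposition~\ref{prop:exact-replete}(iii). The whole point of introducing the replete pre-log model structure and recording that the forgetful functor is right Quillen (the last sentence of Proposition~\ref{prop:replete-inj-model}) is to make this reduction available.

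First I would unravel what fibrancy in the category of augmented $(A,M)$-algebras means. Objects are factorizations $(A,M)\to (B,N)\to (A,M)$ of the identity, the terminal object is the identity factorization, and the model structure is created by the forgetful functor to $s\mathcal P$ with the replete pre-log model structure, so a morphism is a fibration if and only if its underlying map in $s\mathcal P$ is. The underlying map from $(A,M)\to (B,N)\to (A,M)$ to the terminal object is precisely the augmentation $(B,N)\to (A,M)$, so the given object being fibrant means exactly that this augmentation is a fibration in the replete pre-log model structure. By Proposition~\ref{prop:replete-inj-model} the forgetful functor $s\mathcal P\to s\mathcal M$, $(A,M)\mapsto M$, is right Quillen from the replete pre-log model structure to the group completion model structure on $s\mathcal M$, and right Quillen functors preserve \emph{all} fibrations. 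Applying it to the augmentation shows that $N\to M$ is a fibration in the group completion model structure; note that no fibrancy of $(A,M)$ or $(B,N)$ is needed for this.

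Second I would observe that $N\to M$ is virtually surjective. The structure map $(A,M)\to (B,N)$ of the augmented algebra provides a section $M\to N$ whose composite with $N\to M$ is the identity; applying $(\pi_0(-))^{\gp}$ turns this into a retraction of abelian groups, so $(\pi_0(N))^{\gp}\to (\pi_0(M))^{\gp}$ is a split surjection. This is exactly virtual surjectivity of $N\to M$ in the sense of Definition~\ref{def:virt-surj-and-exact}, and it is the instance of the remark following Proposition~\ref{prop:exact-replete} that any object under and over a fixed one is automatically virtually surjective.

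Finally, since $N\to M$ is already a fibration in the group completion model structure, the trivial factorization in which $N\to N$ is the identity (an acyclic cofibration, as cofibrations in the group completion structure coincide with those in the standard one) followed by the given fibration $N\to M$ exhibits $N$ itself as a repletion $N^{\mathrm{rep}}$ of $N$ over $M$. Proposition~\ref{prop:exact-replete}(iii), applied to the virtually surjective map $N\to M$, then asserts precisely that $N^{\mathrm{rep}}=N\to M$ is exact, which is the claim. The genuinely analytic content is thereby deferred to Proposition~\ref{prop:exact-replete}, whose proof uses the Bousfield--Friedlander theorem~\cite[Theorem B.4]{Bousfield-F_Gamma-bisimplicial} to compensate for the failure of right properness of the group completion model structure; accordingly, the main obstacle here is not a new computation but the careful bookkeeping of the double slice and the recognition that the section built into an augmented object furnishes exactly the virtual surjectivity hypothesis needed to bring Proposition~\ref{prop:exact-replete} to bear.
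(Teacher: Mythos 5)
Your proposal is correct and follows essentially the same route as the paper's proof: use the right Quillen forgetful functor of Proposition~\ref{prop:replete-inj-model} to see that $N \to M$ is a fibration in the group completion model structure, note virtual surjectivity (which the paper invokes via the remark after Proposition~\ref{prop:exact-replete}), identify the repletion of $N$ over $M$ with $N$ itself, and conclude by Proposition~\ref{prop:exact-replete}(iii). Your write-up merely makes explicit two points the paper leaves implicit, namely the unravelling of fibrancy in the double slice and the section argument for virtual surjectivity.
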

\begin{proof}
  By Proposition~\ref{prop:replete-inj-model}, the map $N \to M$ is a
  fibration in the group completion model structure. Hence $N \to
  N^{\mathrm{rep}}$ is a weak equivalence in the standard model
  structure, and since $N \to M$ is virtually surjective,
  Proposition~\ref{prop:exact-replete}(iii) shows that $N \to M$ is
  exact.
\end{proof}
\begin{remark}By analogy with~\cite[Definition 3.12]{rog-log}, one can define the
repletion of a map $(B,N) \to (A,M)$ of pre-log simplicial rings with
virtually surjective $N \to M$ as the first map in the factorization
\[ (B,N) \to (\mathbb Z[N^{\mathrm{rep}}]\otimes_{\mathbb Z[N]}B,
N^{\mathrm{rep}}) \to (A,M). \] A similar argument as in
Lemma~\ref{lem:trivial-locus-fib-repl} shows that the repletion map is
an acyclic cofibration in the replete pre-log model structure.  However, we do
not know if the fact that $N^{\mathrm{rep}} \to M$ is a fibration in the group
completion model structure is sufficient to
conclude that $(\mathbb Z[N^{\mathrm{rep}}]\otimes_{\mathbb Z[N]}B,
N^{\mathrm{rep}}) \to (A,M)$ gives rise to a fibration in the replete
pre-log model structure after replacing it by a fibration of pre-log
simplicial rings.
\end{remark}
\begin{remark}
The projective pre-log model structure gives rise to a projective version of the replete pre-log model structure with similar properties.
\end{remark}
\begin{remark}
  Combining the arguments of Proposition~\ref{prop:replete-inj-model}
  with the log model structure of Theorem~\ref{thm:log-model-str}, we
  obtain a left proper simplicial \emph{replete} log model structure
  on $s\mathcal P$. Here an object is fibrant if and only if it is
  injectively fibrant as a pre-log simplicial ring, $M \to (A,\cdot)$
  is a log structure, and $M$ is grouplike. 

  It follows that the fibrant objects in this model structure always
  carry the trivial log structure. Up to pre-log fibrant replacement,
  the fibrant replacement of $(A,M)$ in the replete log model
  structure is given by $(A,M)\to(A[M^{-1}], A[M^{-1}]^{\times})$.
\end{remark}
\subsection{Functorialities}

\begin{definition}
  \label{defn:inverse_image_log_str}
  Let $(A,M)$ be a simplicial pre-log ring, and let $f \colon A \to 
  B$ be a morphism of simplicial rings. Then the \emph{inverse image 
    pre-log structure} on $B$ is given by $M \to (A,\cdot) \to 
  (B,\cdot)$ and is denoted by $f_* M$. The \emph{inverse image 
    log structure} is defined to be the associated log structure. We 
  will denote it by $(f_* M)^a \to (B,\cdot)$.
\end{definition}

\begin{definition}
  \label{defn:direct_image_log_str}
  Let $(B,N)$ be a simplicial pre-log ring, and let $f \colon A \to 
  B$ be a morphism of simplicial rings. Then the \emph{direct image 
    pre-log structure} on $A$ is given by the fiber product of 
  simplicial monoids
  \[
    \xymatrix@-1pc{
      f^*N \ar[r] \ar[d] & (A,\cdot) \ar[d] \\
      N \ar[r] & (B, \cdot).
    }
  \]
  The associated log structure is denoted by $(f^*N)^a$.
\end{definition}
It is straightforward to check that if $(B,N)$ is a log simplicial
ring, then $f^*N$ is again a log structure on $(A,M)$ if $ N \to
(B,\dot)$ or $A^{\times} \to B^{\times}$ is a fibration. On the
contrary, the inverse image of a log structure will in general not
again be a log structure.

\begin{definition}
  Let $(f,f^{\flat}) \colon (A,M) \to (B,N)$ be a morphism of log simplicial 
  rings. Then $(f,f^{\flat})$ is \emph{strict} if $(f_*M)^a \to 
  N$ is an equivalence of simplicial monoids.
\end{definition}

If $A$ is a simplicial commutative ring, we will denote by
$s\mathcal{P}_{A}$ the category of pre-log structures on $A$, i.e., the
over-category $s\mathcal{M}/(A,\cdot)$, with its canonical induced
injective model structure. Likewise, we denote by $\mathbf{sLog}_{A}$
the full subcategory of the homotopy category of $s\mathcal{P}_{A}$ on
the objects $M \to (A,\cdot)$ that are log structures.

\begin{proposition}
A morphism of simplicial commutative rings $f \colon A\to B$ induces a Quillen adjunction \[f_{*}\colon s\mathcal{P}_{A} \rightleftarrows s\mathcal{P}_{B} : f^*.\]
On the level of homotopy categories, this adjunction and the logification 
induce an adjunction  
\[f_{*a}\colon \mathbf{sLog}_{A} \rightleftarrows \mathbf{sLog}_{B} : f^{*}=f^{*a}\] 
whose left adjoint $f_{*a}$ sends $M \to (A,\cdot)$ to $(f_*M)^a \to B$. 
\end{proposition}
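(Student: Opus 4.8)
The plan is to identify the pair $(f_*,f^*)$ with the base-change adjunction on slice categories, derive it, and then glue it to the logification reflection of Corollary~\ref{bousfieldexplained}. First I would note that, writing $g\colon (A,\cdot)\to(B,\cdot)$ for the map of multiplicative simplicial monoids underlying $f$, the inverse image $f_*$ is post-composition with $g$ and the direct image $f^*$ is pullback along $g$. These are the functors $g_!$ and $g^*$ of the slice adjunction $g_!\dashv g^*$ between $s\mathcal M/(A,\cdot)$ and $s\mathcal M/(B,\cdot)$, which is available because $s\mathcal M$ has pullbacks; this yields $f_*\dashv f^*$. To see that it is a Quillen adjunction for the induced injective model structures, I would use that $f_*$ leaves the underlying object and map in $s\mathcal M$ unchanged, so it preserves cofibrations and in fact all weak equivalences. (Equivalently, $f^*$ preserves fibrations and acyclic fibrations, since these are detected on underlying simplicial sets and are stable under pullback.)

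Next I would pass to homotopy categories. Because $f_*$ preserves all weak equivalences, no cofibrant replacement is needed and $\mathbb L f_*=f_*$, so we obtain a derived adjunction $f_*\dashv \mathbb R f^*$ between $\mathrm{Ho}(s\mathcal P_A)$ and $\mathrm{Ho}(s\mathcal P_B)$. By Corollary~\ref{bousfieldexplained}, applied with the ring held fixed, logification exhibits $\mathbf{sLog}_A$ and $\mathbf{sLog}_B$ as full reflective subcategories of $\mathrm{Ho}(s\mathcal P_A)$ and $\mathrm{Ho}(s\mathcal P_B)$, with reflectors $(-)^a$ left adjoint to the inclusions $i$. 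Setting $f_{*a}=(-)^a\circ f_*$---so that $f_{*a}(M)=(f_*M)^a$, as claimed---the desired adjunction isomorphism is assembled as
\begin{align*}
\mathbf{sLog}_B((f_*M)^a,N) &\cong \mathrm{Ho}(s\mathcal P_B)(f_*M,iN)\\
&\cong \mathrm{Ho}(s\mathcal P_A)(M,\mathbb R f^*N)=\mathbf{sLog}_A(M,\mathbb R f^*N),
\end{align*}
where the first isomorphism is $(-)^a\dashv i$, the second is $f_*\dashv \mathbb R f^*$, and the final equality uses fullness of $\mathbf{sLog}_A$ together with the fact that $\mathbb R f^*N$ is again a log structure. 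This simultaneously exhibits $\mathbb R f^*$ as the right adjoint and shows $f^*=f^{*a}$ on $\mathbf{sLog}$.

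The crux---and the step I expect to be the main obstacle---is the last input: that $\mathbb R f^*$ carries log structures to log structures. To verify it I would represent $N$ by an injectively fibrant object (still a log structure, the log condition being homotopy invariant), so that $N\to(B,\cdot)$ is a fibration in $s\mathcal M$ and $\mathbb R f^*N=N\times_{(B,\cdot)}(A,\cdot)$ is a homotopy pullback; write $\beta$ for its projection to $(A,\cdot)$. Since a ring map sends units to units, the map $(A,\cdot)^{\times}\to(B,\cdot)$ factors through $(B,\cdot)^{\times}$, and a direct inspection of the defining pullback~\eqref{eq:log-condition} identifies $\beta^{-1}((A,\cdot)^{\times})$ with $N^{-1}((B,\cdot)^{\times})\times_{(B,\cdot)^{\times}}(A,\cdot)^{\times}$. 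Now $N^{-1}((B,\cdot)^{\times})\to(B,\cdot)^{\times}$ is the pullback of the fibration $N\to(B,\cdot)$ along $(B,\cdot)^{\times}\hookrightarrow(B,\cdot)$, hence itself a fibration, and it is a weak equivalence by the log condition on $N$. Base-changing this weak equivalence along $(A,\cdot)^{\times}\to(B,\cdot)^{\times}$ then yields a weak equivalence $\beta^{-1}((A,\cdot)^{\times})\to(A,\cdot)^{\times}$, which is precisely the log condition for $\mathbb R f^*N$. This completes the identification of the right adjoint and hence the proof.
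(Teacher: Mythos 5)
Your proof is correct and takes essentially the same route as the paper: you establish the first adjunction by noting that $f_*$ acts as postcomposition and preserves cofibrations and (all) weak equivalences, and you obtain the second adjunction by composing the derived adjunction with the logification reflection of Corollary~\ref{bousfieldexplained}. The one step you verify at length---that $\mathbb{R}f^*$ of a log structure is again a log structure, via fibrant replacement and pulling back the acyclic fibration $\alpha^{-1}((B,\cdot)^{\times})\to(B,\cdot)^{\times}$---is precisely what the paper asserts as ``straightforward to check'' in the paragraph following Definition~\ref{defn:direct_image_log_str}, so your write-up just makes the paper's terse argument explicit.
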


\begin{proof} The first adjunction is immediate, and it is easy to verify that $f_*$ preserves cofibrations and trivial cofibrations. The second adjunction follows using Corollary~\ref{bousfieldexplained}.
\end{proof}

\begin{remark} The adjunction $(f_*,f^*)$ actually induces the structure of a \emph{left Quillen presheaf} over $s\mathcal R$ on $s\mathcal{P}$, endowed with the injective pre-log model structure (see for example~\cite[p. 127]{Simpson_ht-hc} for the notion of left Quillen presheaf).
\end{remark}

\section{Log-derivations and the log cotangent complex}\label{sec:log-der}
\subsection{Log-derivations}

We begin by defining derivations in the pre-log context. For this we
use that the simplicial model structures discussed in the previous
section provide simplicial mapping spaces for the respective
categories, and we will write $\Map_{\mathcal C}(-,-)$ for the derived
mapping spaces in a simplicial model category $\mathcal C$.

Let $s \sP _{(A,M)//(B,N)}$ denote the category of simplicial pre-log 
$(A,M)$-algebras over $(B,N)$.

\begin{definition}
  Let $(A,M) \rightarrow (B,N)$ be a morphism of simplicial pre-log 
  rings, and let $J$ be a simplicial $B$-module. Denote by $B \oplus J$ 
  the trivial square zero extension of $B$ by $J$, and let $N\oplus J$ 
  be the simplicial monoid $N \times J^{\textrm{add}}$. Define a 
  morphism $N\oplus J\rightarrow (B\oplus J, \cdot)$ as the product of 
  the two canonical maps $$N\longrightarrow (B,\cdot)\longrightarrow 
  (B\oplus J, \cdot),$$ $$(J,+)\hookrightarrow (B\oplus J, \cdot),\quad 
  x\mapsto (1,x).$$ Then  $(B\oplus J, N\oplus J)$ is canonically an 
  object in $s\mathcal{P}_{(A,M)//(B,N)}$, and we will call it the 
  \emph{trivial square zero extension of $(B,N)$ by $J$}.
\end{definition}

\begin{remark}
  \label{rem:EquivDefDer}
  In case $(f,f^{\flat}) \colon (A,M) \to (B,N)$ is a morphism of log 
  simplicial rings, an equivalent definition of the trivial square zero 
  extension is $(B \oplus J, ((s_0)_*N)^a)$, where $s_0 \colon B \to B 
  \oplus J$ is the canonical section of the projection $B \oplus J \to 
  B$. See \cite[Lemma 11.5]{rog-log}.
\end{remark}
\begin{definition}
  \label{def:derivations}
  Let $(f,f^{\flat})\colon (A,M) \to (B,N)$ be a morphism of simplicial 
  pre-log rings, and let $J$ be a simplicial $B$-module. The simplicial 
  set of $f$-linear derivations of $(B,N)$ with values in $J$ is defined 
  as
  \[
    \Der_{(A,M)}((B,N),J) = \Map_{s\mathcal{P}_{(A,M)//(B,N)}} ((B,N),(B 
    \oplus J, N \oplus J)).
  \]
\end{definition}

For a morphism of log simplicial rings, it does not make a difference if 
we compute derivations in the category of log simplicial rings or in the 
category of simplicial pre-log rings:

\begin{lemma}
  \label{lem:PLDerEqLDer}
  Let $(f,f^{\flat}) \colon (A,M) \to (B,N)$ be a morphism of log 
  simplicial rings. Then
  \[
    \Der_{(A,M)}((B,N),J) \simeq \Map_{s\mathcal{L}_{(A,M)//(B,N)}} 
    ((B,N),(B \oplus J, N \oplus J))
  \]
\end{lemma}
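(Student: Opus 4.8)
The plan is to use that $s\mathcal{L}$ is, by Theorem~\ref{thm:log-model-str}, the left Bousfield localization of the injective pre-log model structure $s\mathcal{P}^{\mathrm{inj}}$ at the set $S$. Both sides of the asserted equivalence are derived mapping spaces between the same two objects, $(B,N)$ and the trivial square zero extension $(B\oplus J, N\oplus J)$, taken in the over/under category of $(A,M)$-algebras over $(B,N)$; the left-hand side is computed in $s\mathcal{P}$ and the right-hand side in $s\mathcal{L}$. The identity localization functor $s\mathcal{P}^{\mathrm{inj}}\to s\mathcal{L}$ furnishes a canonical comparison map $\Der_{(A,M)}((B,N),J)\to\Map_{s\mathcal{L}_{(A,M)//(B,N)}}((B,N),(B\oplus J, N\oplus J))$, and I will show it is a weak equivalence.

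The mechanism is the standard fact that, in a left Bousfield localization $L_S\mathcal{C}$ of $\mathcal{C}$, the natural map $\Map_{\mathcal{C}}(X,Y)\to\Map_{L_S\mathcal{C}}(X,Y)$ is a weak equivalence whenever the target $Y$ is $S$-local: cofibrations, and hence cofibrant replacements of $X$, agree in the two structures, while an $S$-local object is fibrant in both, so both spaces are modeled by $\bfHom(X^c,Y)$ for a common cofibrant replacement $X^c$. Derived mapping spaces in the over/under category $s\mathcal{C}_{(A,M)//(B,N)}$ are assembled as homotopy fibers of ambient mapping spaces $\Map_{s\mathcal{C}}(-,-)$ whose second arguments are among $(B\oplus J, N\oplus J)$ and $(B,N)$; so it suffices to check that both of these targets are $S$-local. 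Since $(f,f^{\flat})$ is a morphism of log simplicial rings, $(B,N)$ is log, and since derived mapping spaces are invariant under replacing the target by a weakly equivalent object, we may pass to its pre-fibrant replacement, which is $S$-local by Lemma~\ref{lem:log-as-local-objects}. The remaining and genuine point is to prove that $(B\oplus J, N\oplus J)$ is again a log simplicial ring.

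To establish this, write $\alpha'\colon N\oplus J = N\times J^{\mathrm{add}}\to (B\oplus J,\cdot)$ for the structure map, so that $\alpha'(n,x)=(\alpha(n),\alpha(n)x)$. Because $J$ is square zero, an element $(b,y)$ of $B\oplus J$ is a unit exactly when $b$ is a unit in $B$; passing to path components, the submonoid $(B\oplus J,\cdot)^{\times}$ is precisely the preimage of $(B,\cdot)^{\times}$ under the projection $B\oplus J\to B$, and the shearing isomorphism $(b,z)\mapsto(b,bz)$ identifies it with $(B,\cdot)^{\times}\times J^{\mathrm{add}}$. Consequently $(\alpha')^{-1}((B\oplus J,\cdot)^{\times})$ is identified with $\alpha^{-1}((B,\cdot)^{\times})\times J^{\mathrm{add}}$, and under these identifications the top horizontal map of the log condition square~\eqref{eq:log-condition} for $(B\oplus J, N\oplus J)$ becomes the product
\[
\bigl(\alpha^{-1}((B,\cdot)^{\times})\to (B,\cdot)^{\times}\bigr)\times \mathrm{id}_{J^{\mathrm{add}}}.
\]
The first factor is a weak equivalence because $(B,N)$ is log, and taking the product with the fixed simplicial set underlying $J^{\mathrm{add}}$ preserves weak equivalences; hence the top map is a weak equivalence and $(B\oplus J, N\oplus J)$ is log. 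Its pre-fibrant replacement remains log by the homotopy invariance of the log condition, so the target is $S$-local, which completes the argument.

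The main obstacle is the unit computation in the square zero extension: one must carefully identify the invertible-path-component monoid $(B\oplus J,\cdot)^{\times}$ and the pullback $(\alpha')^{-1}((B\oplus J,\cdot)^{\times})$ at the level of simplicial monoids, and verify that the shearing automorphism turns the relevant map into an honest product with $\mathrm{id}_{J^{\mathrm{add}}}$. The reduction to the ambient category via homotopy fibers, together with the Bousfield localization comparison, is then formal.
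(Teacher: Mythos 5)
Your overall architecture is sound and in fact runs parallel to the paper's own proof: both arguments reduce the lemma to the single substantive claim that the trivial square zero extension $(B\oplus J, N\oplus J)$ is a log simplicial ring, after which the comparison of derived mapping spaces is a formal consequence of $s\mathcal{L}$ being a left Bousfield localization of $s\mathcal{P}^{\mathrm{inj}}$ (the paper invokes \cite[Proposition 3.3.16]{Hirschhorn_model} to see that an injective fibrant replacement of a log object is already fibrant in the log model structure, which is the same mechanism as your ``$S$-local target'' comparison). The difference is how the key claim is established: the paper cites Remark~\ref{rem:EquivDefDer}, i.e.\ Rognes' identification $(B\oplus J, N\oplus J)\simeq (B\oplus J,((s_0)_*N)^a)$, whose right-hand side is a logification and hence log by construction; you instead attempt a direct computation with units, and this is where there is a genuine gap.

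The flaw is the claim that the shearing map $(b,z)\mapsto(b,bz)$ is an \emph{isomorphism} from $(B,\cdot)^{\times}\times J^{\mathrm{add}}$ onto $(B\oplus J,\cdot)^{\times}$. It is a well-defined map of simplicial monoids, but it is not invertible: a simplex $b$ of $(B,\cdot)^{\times}$ is only required to have \emph{vertices} representing units in $\pi_0(B)$, so in positive simplicial degrees $b$ need not be an invertible element of the ring $B_n$, multiplication by $b$ on $J_n$ need not be bijective, and the would-be inverse $(b,y)\mapsto(b,b^{-1}y)$ does not exist. For instance, if $B$ is a cofibrant model of $\mathbb{Z}/p$ with a polynomial generator $x$ in simplicial degree $1$ satisfying $d_0x=0$, $d_1x=p$, then $1+x$ lies in $((B,\cdot)^{\times})_1$ but is not a unit of $B_1$, and taking $J=B$ the shear fails to be surjective in degree $1$. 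Consequently the top map of the log square for $(B\oplus J,N\oplus J)$ is \emph{not} literally identified with $\bigl(\alpha^{-1}((B,\cdot)^{\times})\to(B,\cdot)^{\times}\bigr)\times\mathrm{id}_{J^{\mathrm{add}}}$; rather, it is that product map composed with the shear. The step can be repaired, because what you actually need is only that the shear is a \emph{weak equivalence}: its source and target both project to $(B,\cdot)^{\times}$ by fibrations (as $J$ is a fibrant simplicial set), the shear covers the identity, and over each vertex $b$ the induced map on fibers is multiplication by $b$ on $J$, which is a weak equivalence since $[b]$ is a unit in $\pi_0(B)$ and $\pi_*(J)$ is a $\pi_0(B)$-module; a fiberwise comparison then concludes. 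Either add this argument (replacing every ``identification'' by ``weak equivalence''), or simply invoke Remark~\ref{rem:EquivDefDer} as the paper does; as written, the step fails.
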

\begin{proof}
  The map $ (B \oplus J, N \oplus J) \to (B,N)$ is a fibration in the
  projective pre-log model structure since $J$ is fibrant as a
  simplicial set. So we can model the derived mapping space
  $\Map_{s\mathcal{P}_{(A,M)//(B,N)}} ((B,N),(B \oplus J, N \oplus
  J))$ by the simplicial mapping space in $s \sP _{(A,M)//(B,N)}$
  where we use $(B \oplus J, N \oplus J)$ as the target and a
  cofibrant replacement of $(A,M) \to (B,N) \xrightarrow{=} (B,N)$ in
  the projective pre-log model structure as the source. Since
  cofibrations in the projective pre-log model structure are also
  cofibrations in the injective pre-log model structure, it remains to
  show that with respect to the injective pre-log model structure, the
  target is weakly equivalent to a fibration in the log model
  structure.  By Remark \ref{rem:EquivDefDer}, $(B \oplus J, N \oplus
  J) \simeq (B \oplus J, ((s_0)_* N)^a)$, showing that $(B \oplus J, N
  \oplus J)$ is a log simplicial ring. Using~\cite[Proposition
  3.3.16]{Hirschhorn_model}, it follows that the fibrant replacement
  of $ (B \oplus J, N \oplus J)$ in the injective pre-log model
  structure on $s \sP _{(A,M)//(B,N)}$ also provides a fibrant
  replacement in the log model structure. 
\end{proof}

\begin{remark}
  \label{remark:strict}
  In case $(f,f^{\flat}) \colon (A,M) \to (B,N)$ is a morphism of log 
  simplicial rings, every log-derivation is a strict morphism. One can 
  show that for a morphism of discrete log rings, the functor from 
  $B$-modules to trivial square zero extensions of $(B,N)$ gives an 
  equivalence of categories between abelian objects in the category of 
  log $(A,M)$-algebras that are strict over $(B,N)$ and the category of 
  $B$-modules \cite[Lemma 4.13]{rog-log}. The same should also hold for 
  a morphism of log simplicial rings, giving a Quillen equivalence 
  between the categories of simplicial $B$-modules and the category $(s 
  \sP ^{\str}_{(A,M)//(B,N)})_{\ab}$ of abelian objects in the category 
  of log $(A,M)$-algebras that are strict over $(B,N)$.
\end{remark}

\subsection{The log cotangent complex}
We have a functor
\[
\Omega \colon s \sP _{(A,M)//(B,N)} \longrightarrow \Mod _B, \qquad
(C,O) \longmapsto \Omega_{(C,O)/(A,M)} \otimes _C B
\]
where $\Omega_{(C,O)/(A,M)}$ is defined by level-wise application of
the functor of log K\"ahler differentials (see \cite[Section 1.7]{ka} 
for the definition) for discrete pre-log rings.
On the other hand, we have the functor of the previous section \[ K
\colon \Mod_B \longrightarrow s \sP _{(A,M)//(B,N)}, \qquad 
J \longmapsto (B \oplus J, N \oplus J).
\]
Note that $K$ is given by applying the trivial square zero
extension functor for discrete pre-log rings levelwise.  We then have the
following result:
\begin{lemma}
  The pair $\Omega \colon s \sP _{(A,M)//(B,N)} \rightleftarrows
  \Mod_B : K$ is a Quillen adjunction with respect to the projective 
  pre-log model structure on $\sP _{(A,M)//(B,N)}$ and the standard model
  structure on $\Mod_B$.
\end{lemma}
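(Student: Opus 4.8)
The plan is to first identify the adjunction bijection and then verify that $K$ is a right Quillen functor; since both functors are defined by a levelwise construction, everything reduces to the corresponding statements for discrete pre-log rings.

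First I would establish that $(\Omega, K)$ is an adjunction. Fix a simplicial pre-log $(A,M)$-algebra $(C,O)$ over $(B,N)$ and a simplicial $B$-module $J$. By the over-$(B,N)$ condition, a map $(C,O) \to (B \oplus J, N \oplus J)$ in $s\sP_{(A,M)//(B,N)}$ is the same datum as a log-derivation of $(C,O)$ relative to $(A,M)$ with values in $J$, viewed as a $C$-module through the structure map $C \to B$. Applied in each simplicial degree, the universal property of the log K\"ahler differentials of Kato \cite[Section 1.7]{ka} identifies such a derivation with a $C$-linear map $\Omega_{(C,O)/(A,M)} \to J$, and the extension-of-scalars adjunction along $C \to B$ rewrites this as a $B$-linear map $\Omega_{(C,O)/(A,M)} \otimes_C B \to J$. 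These bijections are natural and hold degreewise, hence they assemble to an adjunction isomorphism $\Hom_{\Mod_B}(\Omega(C,O), J) \cong \Hom_{s\sP_{(A,M)//(B,N)}}((C,O), K(J))$.

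It then remains to check that $K$ is a right Quillen functor, that is, that it preserves fibrations and acyclic fibrations. Since fibrations and weak equivalences in the double-slice category $s\sP_{(A,M)//(B,N)}$ are created by the forgetful functor to $s\sP$, it suffices to argue in the projective pre-log model structure on $s\sP$, whose fibrations and acyclic fibrations are described componentwise in Proposition~\ref{prop:proj-pre-log-model-str}. A fibration (resp.\ acyclic fibration) $J \to J'$ in $\Mod_B$ is one whose underlying map of simplicial sets is a fibration (resp.\ acyclic fibration). Since $B \oplus J \cong B \times J$ and $N \oplus J = N \times J^{\textrm{add}}$ as simplicial sets, the two components of $K(J) \to K(J')$ are $\id_B \times (J \to J')$ and $\id_N \times (J \to J')$, each of which is a base change of $J \to J'$ along a projection. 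A base change of a fibration (resp.\ acyclic fibration) of simplicial sets is again one, so both components are fibrations (resp.\ acyclic fibrations) in the standard model structures on $s\mathcal R$ and $s\mathcal M$; by Proposition~\ref{prop:proj-pre-log-model-str}, the map $K(J) \to K(J')$ is therefore a fibration (resp.\ acyclic fibration) in the projective pre-log model structure.

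I expect the only genuinely delicate point to be the careful unwinding of the universal property in the first step --- in particular keeping track of the base change $\otimes_C B$ and the over-$(B,N)$ condition, so that the target $B$-module $J$ is correctly recovered from a map into the trivial square zero extension. The verification that $K$ is right Quillen, by contrast, is a direct consequence of the explicit componentwise description of projective fibrations together with the stability of fibrations and acyclic fibrations under base change.
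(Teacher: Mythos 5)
Your proposal is correct and follows exactly the paper's argument: adjointness is reduced to the level-wise (discrete) universal property of Kato's log K\"ahler differentials combined with extension of scalars along $C \to B$, and the right Quillen property of $K$ is checked componentwise using the explicit description of fibrations and acyclic fibrations in the projective pre-log model structure. The paper's own proof states these two steps in two sentences; yours is the same proof with the details written out.
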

\begin{proof}
  Adjointness follows from the corresponding statement for discrete
  log rings, since $\Omega$ and $K$ are both applied level-wise.
  Since $K$ clearly preserves fibrations and trivial fibrations, the
  adjunction is in fact a Quillen adjunction.
\end{proof}
Since $\Omega$ is part of a Quillen adjunction we obtain a left
derived functor
\[
L \Omega \colon \Ho (s \sP _{(A,M)//(B,N)}) \longrightarrow \Ho
(\Mod_B).
\]
\begin{definition}
  We define the log cotangent complex $\IL_{(B,N)/(A,M)}$ of a
  morphism of simplicial pre-log rings $(A,M) \to (B,N)$ to be $L \Omega
  (B,N)$.  Here $(B,N)$ is regarded as an object of $s \sP
  _{(A,M)//(B,N)}$.
\end{definition}
Thus by definition, the log cotangent complex represents the
derivations, since by adjunction we have
\begin{align*}
  \Der_{(A,M)}((B,N),J) &= \Map_{s\mathcal{P}_{(A,M)//(B,N)}}
  ((B,N),(B \oplus J, N \oplus J))\\
  &\simeq \Map_{\Mod_B}(\IL_{(B,N)/(A,M)},J)
\end{align*}
\begin{remark}
  In case $(A,M) \to (B,N)$ is a morphism of discrete log rings, the
  above definition recovers Gabber's definition~\cite[\S 8]{olsson} of the log cotangent
  complex.
\end{remark}
\begin{remark}
  \label{remark:LeftDerivedAb}
  Note that by Remark \ref{remark:strict} the log K\"ahler differentials 
  of a morphism of discrete log rings only depend on the abelian objects 
  in the category of log rings that are strict over $(B,N)$, since the 
  log K\"ahler differentials explicitly compute the abelianization 
  functor in this category. If we assume that we have the Quillen 
  equivalence between $\sMod_B$ and $(s \sP 
  ^{\str}_{(A,M)//(B,N)})_{\ab}$ mentioned in Remark 
  \ref{remark:strict}, then by Lemma \ref{lem:PLDerEqLDer} the log 
  cotangent complex explicitly computes the left derived of 
  abelianization in the category $s \sP ^{\str}_{(A,M)//(B,N)}$. As 
  Rognes points out in \cite{rog-log}, it should be interesting to 
  investigate the abelianization functor in other categories than log 
  rings that are strict over $(B,N)$. For instance one could replace 
  strictness by the weaker notion of repleteness, or by no condition at 
  all.
\end{remark}

The homotopical version of the log cotangent complex used in~\cite[\S 11]{rog-log} immediately
translates to give the following version of the log cotangent complex of a morphism
of log simplicial rings $(f, f^{\flat}) \colon (A,M) \to (B,N)$, which 
is defined as the following homotopy pushout:
\[
\xymatrix@-1pc{ B \otimes _{\IZ[N]} \IL_{\IZ[N]/\IZ[M]} \ar[r]^-{\psi}
  \ar[d]_{\phi} & B \otimes N^{\gp}/M^{\gp} \ar[d]^-{\bar{\phi}}\\
  \IL_{B/A} \ar[r]_{\bar{\psi}} & \IL^{\Rog}_{(B,N)/(A,M)} }
\]
Here $\psi$ is in simplicial degree $s$ defined as the application of the morphism
\begin{align*}
  B_s \otimes _{\IZ[N_s]} \Omega^1_{\IZ[N_s]/\IZ[M_s]} &\to B_s \otimes 
  N_s^{\gp}/M_s^{\gp}\\
  b \otimes dn &\longmapsto b \cdot \beta(n) \otimes \gamma (n)
\end{align*}
where $\gamma$ denotes the canonical morphism to group completion.

Rognes' verification that this complex represents the derived functor of
derivations also carries over to the present context:
\begin{proposition}
  \cite[Proposition 11.21]{rog-log} There is a natural weak
  equivalence of mapping spaces
  \[
  \Map_{\Mod_B}(\IL^{\Rog}_{(B,N)/(A,M)},J) \simeq
  \Der_{(A,M)}((B,N),J)
  \]
\end{proposition}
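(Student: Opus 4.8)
The plan is to reduce the statement to a comparison between two already-established representability results. By the definition of the log cotangent complex $\IL_{(B,N)/(A,M)} = L\Omega(B,N)$ together with the Quillen adjunction $(\Omega, K)$, we already have a natural weak equivalence
\[
\Der_{(A,M)}((B,N),J) \simeq \Map_{\Mod_B}(\IL_{(B,N)/(A,M)},J).
\]
Hence it suffices to produce a natural weak equivalence $\Map_{\Mod_B}(\IL^{\Rog}_{(B,N)/(A,M)},J) \simeq \Map_{\Mod_B}(\IL_{(B,N)/(A,M)},J)$, and the cleanest way to obtain this is to compare the two cotangent complexes directly. So first I would apply $\Map_{\Mod_B}(-,J)$ to the homotopy pushout square defining $\IL^{\Rog}_{(B,N)/(A,M)}$. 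Since $\Map_{\Mod_B}(-,J)$ sends homotopy pushouts to homotopy pullbacks, this exhibits $\Map_{\Mod_B}(\IL^{\Rog}_{(B,N)/(A,M)},J)$ as the homotopy pullback of
\[
\Map_{\Mod_B}(\IL_{B/A},J) \longrightarrow \Map_{\Mod_B}(B\otimes_{\IZ[N]}\IL_{\IZ[N]/\IZ[M]},J) \longleftarrow \Map_{\Mod_B}(B\otimes N^{\gp}/M^{\gp},J).
\]

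The second step is to identify each of these three mapping spaces with a space of derivations into the trivial square zero extension, but for the constituent pieces of the pre-log structure. The term $\Map_{\Mod_B}(\IL_{B/A},J)$ is the ordinary (non-log) space of $f$-linear derivations $\Der_A(B,J)$, via Quillen's representability of the ordinary cotangent complex. The remaining two terms, through the group-completion and Kähler-differential identifications built into $\psi$, measure the compatible monoid-level data: a derivation of the monoid map $N \to (B,\cdot)$ relative to $M$, which under the logarithmic splitting corresponds exactly to the monoid component $f^\flat$ of a map into $(B\oplus J, N\oplus J)$. The point is that the trivial square zero extension $(B\oplus J, N\oplus J)$ has underlying ring $B\oplus J$ and monoid $N\times J^{\mathrm{add}}$, so a derivation into it splits into a ring-derivation part and a monoid part that interact precisely along the structure map; this interaction is what the homotopy pullback encodes.

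The third step is to assemble these identifications into a natural equivalence. I would argue that the homotopy pullback description of $\Map_{\Mod_B}(\IL^{\Rog}_{(B,N)/(A,M)},J)$ matches the pullback description of $\Der_{(A,M)}((B,N),J)$ obtained by unwinding the pullback square relating mapping spaces in $s\mathcal P$ to those in $s\mathcal M$ and $s\mathcal R$ (the square displayed just before Section~2.2): a map of pre-log algebras into a trivial square zero extension is exactly a compatible pair consisting of a ring-level derivation and a monoid-level map, glued over the common structure map. Because all constructions ($\Omega$, $K$, group completion, the homotopy pushout) are functorial in the input data, the resulting equivalence is natural in $(A,M)\to(B,N)$ and in $J$, as required.

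The main obstacle I anticipate is the careful matching of the monoid-level contribution. Specifically, one must verify that $\Map_{\Mod_B}(B\otimes N^{\gp}/M^{\gp},J)$ genuinely computes the $J^{\mathrm{add}}$-valued homomorphisms $N^{\gp}/M^{\gp} \to J$ that arise from the monoid component of a derivation, and that the map $\psi$ — defined levelwise by $b\otimes dn \mapsto b\cdot\beta(n)\otimes\gamma(n)$ — induces on mapping spaces exactly the restriction along $N \to N^{\gp}$ that the trivial square zero extension dictates. This requires the logarithmic splitting of derivations into a ``ring'' part and a ``dlog'' part, and checking that the homotopy pushout defining $\IL^{\Rog}$ is the universal way to record a derivation that is simultaneously compatible with $\IL_{B/A}$ and with the monoidal group-completion data. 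Since the statement is cited from~\cite[Proposition 11.21]{rog-log}, the real content is to confirm that Rognes' argument in the setting of commutative $\mathcal I$-space monoids transports verbatim to simplicial commutative monoids; the adjunction $(\Omega,K)$ and the pullback description of pre-log mapping spaces are exactly the structural inputs that make this transport formal.
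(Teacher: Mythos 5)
Your core argument is sound, and it is worth noting that the paper itself offers no proof of this proposition beyond the citation: it asserts that Rognes' verification ``carries over to the present context'' and moves on. What you sketch --- apply $\Map_{\Mod_B}(-,J)$ to the defining homotopy pushout to obtain a homotopy pullback of three mapping spaces, identify them respectively with ring derivations $\Der_A(B,J)$, derivations of the monoid ring $\IZ[N]$ relative to $\IZ[M]$, and $\mathrm{dlog}$-type maps out of $N^{\gp}/M^{\gp}$, and then match this pullback against the pullback decomposition of $\Map_{s\mathcal{P}_{(A,M)//(B,N)}}((B,N),(B\oplus J,N\oplus J))$ coming from the square expressing $\bfHom_{s\mathcal P}$ in terms of $\bfHom_{s\mathcal R}$ and $\bfHom_{s\mathcal M}$ --- is precisely Rognes' argument transported to simplicial commutative rings, i.e.\ exactly the content the citation appeals to. Your final paragraph also correctly isolates the two points where the real work lies: that $\Map_{\Mod_B}(B\otimes N^{\gp}/M^{\gp},J)$ computes monoid maps $N\to J$ killing $M$ (this is where grouplikeness of $J$ and the group-completion results of Section~\ref{sec:simp-com-mono} enter), and that $\psi^*$ encodes the relation $\delta(\beta(n))=\beta(n)\cdot\mathrm{dlog}(n)$ forced by the structure map of $(B\oplus J,N\oplus J)$.

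The one thing you should excise is the opening reduction. You propose to invoke $\Der_{(A,M)}((B,N),J)\simeq\Map_{\Mod_B}(\IL_{(B,N)/(A,M)},J)$ and then ``compare the two cotangent complexes directly.'' In the paper that comparison is Theorem~\ref{thm:comparison-rognes-cot-cx}, and it is \emph{deduced} from the present proposition by a Yoneda argument; there is no independent direct comparison of $\IL_{(B,N)/(A,M)}$ and $\IL^{\Rog}_{(B,N)/(A,M)}$ available at this stage, so taking that route literally would be circular. Fortunately your actual argument never uses the reduction: from the pushout-to-pullback step onward you compare $\Map_{\Mod_B}(\IL^{\Rog}_{(B,N)/(A,M)},J)$ with $\Der_{(A,M)}((B,N),J)$ itself, which is what the statement requires. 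Drop the first paragraph and the plan is the right one.
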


This allows us to compare the two definitions.
\begin{theorem}\label{thm:comparison-rognes-cot-cx}
  Let $(f, f^{\flat}) \colon (A,M) \to (B,N)$ be a morphism of
  simplicial pre-log rings. Then
  \[
  \IL_{(B,N)/(A,M)} \cong \IL^{\Rog}_{(B,N)/(A,M)}
  \]
  in $\mathrm{D}(B)$.
\end{theorem}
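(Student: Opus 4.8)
The plan is to deduce the isomorphism from the fact that both complexes corepresent one and the same functor of derivations, and then to invoke the Yoneda lemma in the derived category $\mathrm{D}(B)=\Ho(\Mod_B)$. On the one hand, the algebraic log cotangent complex $\IL_{(B,N)/(A,M)}=L\Omega(B,N)$ satisfies, by the derived adjunction recorded immediately after its definition, a weak equivalence
\[ \Map_{\Mod_B}(\IL_{(B,N)/(A,M)}, J) \simeq \Der_{(A,M)}((B,N),J), \]
which is natural in the simplicial $B$-module $J$ precisely because it arises from the Quillen adjunction $\Omega \dashv K$. On the other hand, the cited Proposition~\cite[Proposition 11.21]{rog-log} supplies a natural weak equivalence $\Map_{\Mod_B}(\IL^{\Rog}_{(B,N)/(A,M)}, J) \simeq \Der_{(A,M)}((B,N),J)$ for Rognes' homotopy-pushout model.

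First I would splice these two natural equivalences together to obtain a weak equivalence
\[ \Map_{\Mod_B}(\IL_{(B,N)/(A,M)}, J) \simeq \Map_{\Mod_B}(\IL^{\Rog}_{(B,N)/(A,M)}, J), \]
natural in $J \in \Mod_B$. Passing to path components then yields a bijection
\[ \Hom_{\mathrm{D}(B)}(\IL_{(B,N)/(A,M)}, J) \cong \Hom_{\mathrm{D}(B)}(\IL^{\Rog}_{(B,N)/(A,M)}, J), \]
natural in $J$ regarded as an object of $\mathrm{D}(B)$, since $\pi_0$ of a derived mapping space in $\Mod_B$ computes morphisms in $\mathrm{D}(B)$. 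In other words, the two complexes corepresent the same covariant functor $\mathrm{D}(B) \to \mathrm{Set}$.

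Finally I would apply the Yoneda lemma: the assignment $X \mapsto \Hom_{\mathrm{D}(B)}(X, -)$ is a fully faithful contravariant embedding of $\mathrm{D}(B)$ into the functor category, so the natural isomorphism of corepresented functors above is induced by a unique isomorphism $\IL_{(B,N)/(A,M)} \cong \IL^{\Rog}_{(B,N)/(A,M)}$ in $\mathrm{D}(B)$, which is the assertion of the theorem.

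The step I expect to require the most care is the genuine naturality in $J$ of \emph{both} representing equivalences, since this is exactly what licenses the Yoneda argument at the level of the homotopy category. For $\IL_{(B,N)/(A,M)}$ this is the naturality of the unit/counit of the derived adjunction $\Omega \dashv K$, which is formal; the more delicate point is to confirm that the equivalence of~\cite[Proposition 11.21]{rog-log} is natural as a transformation of functors in $J$, and not merely a pointwise weak equivalence, and that both equivalences land in the same derived mapping spaces computing $\Hom_{\mathrm{D}(B)}$. Once naturality on $\pi_0$ is established, the passage through $\mathrm{D}(B)$ and the invocation of Yoneda are purely formal.
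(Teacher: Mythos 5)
Your proposal is correct and follows essentially the same route as the paper's own proof: both arguments observe that $\IL_{(B,N)/(A,M)}$ and $\IL^{\Rog}_{(B,N)/(A,M)}$ represent the functor $J \mapsto \Der_{(A,M)}((B,N),J)$ (via the derived adjunction $\Omega \dashv K$ and \cite[Proposition 11.21]{rog-log}, respectively) and then conclude by the Yoneda lemma in $\mathrm{D}(B)$. Your version merely makes explicit the passage to $\pi_0$ of mapping spaces and the naturality in $J$, which the paper leaves implicit.
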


\begin{proof}
  Let $J$ be a simplicial module. Then the functor mapping $J$ to the 
  logarithmic derivations $\Der_{(A,M)}((B,N),J)$ is representable both 
  by $\IL_{(B,N)/(A,M)}$ and $\IL^{\Rog}_{(B,N)/(A,M)}$. Using the 
  Yoneda lemma, we thus conclude that $  \IL_{(B,N)/(A,M)} \cong 
  \IL^{\Rog}_{(B,N)/(A,M)} $ in the derived category $D(B)$ of 
  simplicial $B$-modules.
\end{proof}

Rognes definition leads to simple proofs of the expected properties of 
the log cotangent complex.

\begin{proposition}\label{basictriangles}
  \begin{enumerate}[(i)]
  \item Let $(A,M) \rightarrow (B,N) \rightarrow (C,O)$ be maps of 
    simplicial pre-log rings. Then there is a transitivity homotopy
  cofiber sequence in the homotopy category of simplicial $C$-modules
  \[C\otimes_{B}^{\mathbb{L}}\mathbb{L}_{(B,N)/(A,M)} \longrightarrow 
  \mathbb{L}_{(C,O)/(A,M)} \longrightarrow \mathbb{L}_{(C,O)/(B,N)}.\]
  \item Let \[\xymatrix@-1pc{ (A,M) \ar[d] \ar[r] & (B,N) \ar[d] \\ (R,P) \ar[r]
    & (S,Q)}\] be a homotopy pushout square in $s\mathcal{L}$, then
  there is an isomorphism in the homotopy category of simplicial
  $S$-modules \[S\otimes_{B}^{\mathbb{L}}\mathbb{L}_{(B,N)/(A,M)}
  \simeq \mathbb{L}_{(S,Q)/(R,P)}.\]
\end{enumerate}
\end{proposition}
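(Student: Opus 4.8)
The plan is to exploit the comparison of Theorem~\ref{thm:comparison-rognes-cot-cx}, which identifies $\IL_{(B,N)/(A,M)}$ with the Rognes model $\IL^{\Rog}_{(B,N)/(A,M)}$ defined by the homotopy pushout square displayed just before that theorem. Both properties then follow by reducing to the corresponding classical statements for the non-logarithmic cotangent complexes $\IL_{B/A}$ and $\IL_{\IZ[N]/\IZ[M]}$ and for the group-completion quotient $N^{\gp}/M^{\gp}$ that appear as the three corners of that square. The key formal point is that the log cotangent complex is a homotopy pushout of these three pieces, and that both transitivity and base change commute with such homotopy pushouts.

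For (i) I would fix the composite $(A,M)\to(B,N)\to(C,O)$ and base-change the defining square of $\IL^{\Rog}_{(B,N)/(A,M)}$ along $B\to C$, so that all three log cotangent complexes are exhibited as homotopy pushouts of spans $D_1\to D_2\to D_3$ in the derived category $\mathrm{D}(C)$, indexed over $\bullet\leftarrow\bullet\to\bullet$; here $D_1,D_2,D_3$ are the defining spans (base-changed to $C$) of $\IL^{\Rog}_{(B,N)/(A,M)}$, $\IL^{\Rog}_{(C,O)/(A,M)}$, and $\IL^{\Rog}_{(C,O)/(B,N)}$. The classical transitivity cofiber sequences — for $A\to B\to C$ on the ring corner, for $\IZ[M]\to\IZ[N]\to\IZ[O]$ (after base change to $C$) on the monoid-ring corner, and for $M^{\gp}\to N^{\gp}\to O^{\gp}$ together with the octahedral axiom on the group-completion corner — show that $D_3$ is, cornerwise, the cofiber of $D_1\to D_2$. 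Since $\mathrm{D}(C)$ is stable and homotopy pushouts commute with cofibers, passing to homotopy pushouts yields that $\IL^{\Rog}_{(C,O)/(B,N)}$ is the cofiber of $C\otimes^{\mathbb{L}}_B\IL^{\Rog}_{(B,N)/(A,M)}\to\IL^{\Rog}_{(C,O)/(A,M)}$, and Theorem~\ref{thm:comparison-rognes-cot-cx} translates this into the asserted sequence.

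For (ii) I would first pick a convenient representative of the homotopy pushout. Since $s\mathcal L$ has the same cofibrations as $s\mathcal P^{\mathrm{inj}}$ (Theorem~\ref{thm:log-model-str}), the homotopy pushout is computed by taking the ordinary pushout $(S,Q)$ of a span that is cofibrant in $s\mathcal P^{\mathrm{inj}}$; because injective cofibrations are cofibrations in both $s\mathcal R$ and $s\mathcal M$, and the two forgetful functors preserve colimits, the underlying squares of rings and of monoids are then honest homotopy pushouts in the standard model structures, with $S\simeq B\otimes^{\mathbb{L}}_A R$ and $Q$ the homotopy pushout of $N\leftarrow M\to P$ in $s\mathcal M$. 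Applying $\IZ[-]$ and using that group completion preserves homotopy pushouts (Corollary~\ref{cor:hocart-sAb-sM}), I obtain $\IZ[Q]\simeq\IZ[N]\otimes^{\mathbb{L}}_{\IZ[M]}\IZ[P]$ and $Q^{\gp}/P^{\gp}\simeq N^{\gp}/M^{\gp}$. Classical base change for $\IL_{S/R}$ and for $\IL_{\IZ[Q]/\IZ[P]}$, together with this identification of group-completion quotients, matches the three corners of the defining square for $\IL^{\Rog}_{(S,Q)/(R,P)}$ with those of $S\otimes^{\mathbb{L}}_B\IL^{\Rog}_{(B,N)/(A,M)}$ (after base change to $S$), and hence the two homotopy pushouts; Theorem~\ref{thm:comparison-rognes-cot-cx} again returns the statement for $\IL$.

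The routine but essential bookkeeping, which I regard as the main obstacle in each part, is to check that the identifications at the three corners are compatible with the structure maps $\psi$ and $\phi$ of the defining squares, so that they genuinely assemble into a map of spans in (i) and into an equivalence of spans in (ii); this amounts to unwinding the degree-wise definition of $\psi$ and the naturality of the classical transitivity and base-change maps. For (ii) there is the additional subtlety that a homotopy pushout in the \emph{localized} structure $s\mathcal L$ need not have underlying homotopy-pushout squares on the nose, since the $s\mathcal L$-weak equivalences are only the log equivalences; this is precisely why one must argue through a cofibrant representative in $s\mathcal P^{\mathrm{inj}}$ and invoke the invariance of $\IL$ under log equivalences, so that the computed answer is independent of the chosen model of the homotopy pushout.
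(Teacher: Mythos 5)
Your proposal is correct and takes essentially the same route as the paper: the paper's entire proof is the one-line citation of \cite[Propositions 11.28 and 11.29]{rog-log} (Rognes' transitivity and base-change statements for $\IL^{\Rog}$), read through the comparison of Theorem~\ref{thm:comparison-rognes-cot-cx}, and what you have written out is precisely the corner-by-corner argument by which those propositions are proved from the defining homotopy pushout square. Your closing remark on part (ii) --- that a homotopy pushout in $s\mathcal{L}$ agrees with a pre-log homotopy pushout only up to log equivalence, so that one must additionally invoke invariance of $\IL$ under log equivalences --- is in fact more careful than the paper itself, which applies Rognes' pre-log base-change statement to an $s\mathcal{L}$-pushout without comment.
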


\begin{proof} These follow immediately from \cite[Propositions 11.28 and 11.29]{rog-log}.
\end{proof}

\subsection{Square zero extensions}

\begin{definition}\label{def:square}
Let $(f,f^\flat)\colon (A,M) \to (B,N)$ be a morphism of log simplicial rings, $J$ 
a simplicial $B$-module, and $\eta \colon \Omega_{(B,N)/(A,M)} \to J$ a 
derivation. We define $(B^{\eta}, N^{\eta})$ via the pullback diagram 
in $s\mathcal{L}_{(A,M)}$
\[ \xymatrix@-1pc{(B^{\eta}, N^{\eta}) \ar[r] \ar[d]_-{p_{\eta}} & 
  (B,N)\ar[d]^-{s_0} \\ (B,N) \ar[r]^-{\eta} & (B \oplus J, N \oplus 
  J)}\] and call the map $p_{\eta} \colon (B^{\eta}, N^{\eta}) 
\rightarrow (B,N)$ the \emph{natural projection}.
\end{definition}

This defines a functor
\[
  \Phi \colon (\Mod_B)_{\Omega_{(B,N)/(A,M)}} \to s \sL_{(A,M)//(B,N)}
\]
from the category of simplicial modules under the log K\"ahler 
differentials to the category of log simplicial $(A,M)$-algebras 
augmented over $(B,N)$. This functor has a left adjoint $\Psi$ given by 
mapping an object $(A,M) \to (C,O) \to (B,N)$ to the sequence of 
differentials $\Omega_{(B,N)/(A,M)} \to \Omega_{(B,N)/(C,O)}$. It is 
straightforward to verify that this defines a Quillen adjunction, so 
that we obtain derived functors $L\Psi$ and $R\Phi$.

The following statement is proved in Appendix~\ref{sec:appendixA}:
\begin{proposition}\label{prop:cruciallemma} Let $\pi\colon R \rightarrow 
  S$ be a square zero extension of discrete commutative rings, and let 
  $J= \ker \pi$ be the corresponding square zero 
  ideal. Then there exists a derivation $d \in 
  \pi_{0}\Map_{R/s\mathcal{A}/S}(S,S \oplus J[1])$ such that 
  there exists an isomorphism in $\mathrm{Ho}(s\mathcal{R}/S)$, 
  between $\pi\colon R \rightarrow  S$ and the canonical projection 
  $p_d\colon S \oplus_d J \rightarrow S,$ where 
  $p_d$ is defined by the homotopy pullback diagram\[
  \xymatrix@-1pc{S \oplus_{d} J \ar[r] 
    \ar[d]_-{p_d} & S \ar[d]^-{0} \\ S 
    \ar[r]^-{d} & S \oplus J[1].}\] 
\end{proposition}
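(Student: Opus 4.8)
The plan is to exhibit $R$ as the square zero extension classified by the canonical truncation of the relative cotangent complex $\IL_{S/R}$, and then to identify that extension with the homotopy pullback $S\oplus_d J$ by a direct computation of homotopy groups. First I would construct the derivation $d$. Since $\pi\colon R\to S$ is a surjection of discrete rings with kernel $J$ satisfying $J^2=0$, the relative cotangent complex $\IL_{S/R}$ is $1$-connected with $\pi_0\IL_{S/R}=\Omega_{S/R}=0$ and $\pi_1\IL_{S/R}\cong J/J^2 = J$. Its bottom Postnikov section therefore yields a canonical map $\IL_{S/R}\to(\pi_1\IL_{S/R})[1]\cong J[1]$ in $\Mod_S$, and I take $d$ to be the associated class in $\pi_0\Map_{\Mod_S}(\IL_{S/R},J[1])\cong\pi_0\Map_{R/s\mathcal{A}/S}(S,S\oplus J[1])$. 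By construction $d$ induces the identity on $\pi_1$ under the identification $\pi_1\IL_{S/R}\cong J$.

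Next I would produce a comparison map $\Theta\colon R\to S\oplus_d J$ in $\Ho(s\mathcal{R}/S)$. By the universal property of the homotopy pullback defining $S\oplus_d J$, giving $\Theta$ amounts to giving two maps $R\to S$, both of which I take to be $\pi$, together with a homotopy between the composites $d\circ\pi$ and $0\circ\pi$ into $S\oplus J[1]$. Such a homotopy exists canonically: pulling the extension $\pi$ back along itself produces $R\times_S R\cong R\oplus J$, the \emph{split} square zero extension of $R$ by $J$, so the derivation $d\circ\pi$ of $R$ is canonically null. In cotangent-complex terms this is the vanishing of the composite $\IL_{R/\IZ}\to\IL_{S/\IZ}\to\IL_{S/R}\xrightarrow{d}J[1]$, whose first two factors are consecutive maps in the transitivity cofiber sequence for $\IZ\to R\to S$; the resulting null\-homotopy furnishes exactly the homotopy needed to define $\Theta$.

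Finally I would check that $\Theta$ is an equivalence by computing the homotopy groups of $S\oplus_d J$. The Mayer--Vietoris long exact sequence of the homotopy pullback of $S\xrightarrow{d}S\oplus J[1]\xleftarrow{0}S$, together with $\pi_n S=0$ for $n>0$ and $\pi_*(S\oplus J[1])=(S,J,0,\dots)$, shows that $S\oplus_d J$ is discrete and that $\pi_0(S\oplus_d J)$ sits in a short exact sequence $0\to J\to\pi_0(S\oplus_d J)\to S\to 0$. Thus both $R$ and $S\oplus_d J$ are genuine square zero extensions of $S$ by $J$, and $\Theta$ is a map between them covering $\id_S$. The main obstacle is to verify that $\Theta$ restricts to the identity on the ideals $J$: this requires matching the connecting homomorphism $\pi_1(S\oplus J[1])=J\to\pi_0(S\oplus_d J)$ of the Mayer--Vietoris sequence with the truncation map that defines $d$ on $\pi_1\IL_{S/R}\cong J$. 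Once this compatibility is in place, the five lemma upgrades $\Theta$ to an isomorphism of extensions, proving the claim. Alternatively, one may invoke the general classification of square zero extensions by the cotangent complex (as in the work of Lurie or To\"en--Vezzosi), under which $R$ corresponds to $d$ and $S\oplus_d J$ is by definition the extension associated to $d$; the argument above simply makes this identification explicit in the discrete case.
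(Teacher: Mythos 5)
Your proposal is correct in substance, but it follows a genuinely different route from the paper's. The paper's general-characteristic proof never touches the cotangent complex: it applies $-\otimes_R^{\mathbb{L}}S$ to the cofiber sequence $R \xrightarrow{\pi} S \to J[1]$, uses the multiplication splitting to produce a map $\psi\colon S\otimes_R^{\mathbb{L}}S \to S\oplus J[1]$ that becomes an isomorphism after $\tau_{\leq 1}$, and defines $d$ as $\psi_{\leq 1}\circ \tau_{\leq 1}\circ j_2$ with $j_2(y)=1\otimes y$; the comparison map $R \to S\oplus_d J$ then comes from the strict equality $j_1\circ\pi = j_2\circ\pi$, and the proof ends by asserting the homotopy-group check. (A second, characteristic-zero proof uses explicit cdga models.) You instead build $d$ from the Postnikov truncation $\IL_{S/R}\to \pi_1(\IL_{S/R})[1]\cong J[1]$, relying on the standard facts $\pi_0\IL_{S/R}=0$ and $\pi_1\IL_{S/R}\cong J/J^2$ (Illusie-type input, consistent with the paper's toolkit elsewhere), you obtain the homotopy $d\circ\pi\simeq 0\circ\pi$ from the canonical nullhomotopy of two consecutive maps in the transitivity cofiber sequence --- this is the cotangent-complex avatar of the paper's $j_1\pi=j_2\pi$ --- and you finish with Mayer--Vietoris plus the five lemma. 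Your route is shorter and more conceptual, and it plugs directly into the general classification of square-zero extensions; the closing alternative of citing Lurie or To\"en--Vezzosi is legitimate but amounts to citing the statement itself from the literature, which is presumably why the paper wanted a self-contained argument. The paper's route buys exactly that self-containedness: no knowledge of $\pi_*\IL_{S/R}$ is needed, since the splitting of $S\otimes_R^{\mathbb{L}}S$ manufactures the derivation by hand. Two small points: $\IL_{S/R}$ is connected, not $1$-connected (its $\pi_1$ is $J$); and the compatibility you flag as the ``main obstacle'' (matching the Mayer--Vietoris connecting homomorphism with the truncation map) is indeed where the real computation lives, but you are not below the paper's own standard here, since its proof likewise concludes with the assertion that computing the action of $\alpha$ on homotopy groups easily shows it is an isomorphism.
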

Theorem~\ref{thm:strict-sq-zero-log-derivation} below provides the
analog of Proposition~\ref{prop:cruciallemma} for strict square zero
log extensions of discrete log rings. The proof of that theorem will
be based on the next two lemmas. For this recall that a monoid $P$ is
\emph{integral} if the group completion map $P \to P^\gp$ is
injective.

The following elementary result is well known (see for
example the remark after~\cite[Definition (4.6)]{ka}).  We include a
proof since we were unable to locate one in the literature.
\begin{lemma}\label{lemma:strict-exact}   A strict square zero extension of discrete and integral log rings $(\pi, \pi^{\flat})\colon (R,P,\alpha) \rightarrow (S, Q,\beta)$ is exact, i.e., the diagram \[\xymatrix@-1pc{P\ar[d]_{\pi^{\flat}} \ar[rr] && P^{\gp} \ar[d]^-{(\pi^{\flat})^{\gp}} \\ Q \ar[rr] && Q^{\gp} }\] is cartesian in the category of commutative monoids.
\end{lemma}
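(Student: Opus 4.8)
The plan is to reduce the assertion to an explicit computation with the associated log structure, using the square-zero hypothesis only to control units. I work throughout in the category of commutative monoids, and write $P_0 = \alpha^{-1}(R^{\times})$ and $\bar\alpha = \pi\circ\alpha\colon P \to (S,\cdot)$. The one place where the square-zero hypothesis enters is the observation that, with $J = \ker\pi$ and $J^2 = 0$, one has $R^{\times} = \pi^{-1}(S^{\times})$: if $\pi(r)$ is a unit, lift its inverse to some $u \in R$, so that $ru = 1 + j$ with $j \in J$; then $(1+j)(1-j) = 1$, whence $ru$ and therefore $r$ is a unit. Together with the log condition for $(R,P,\alpha)$, which restricts $\alpha$ to an isomorphism $P_0 = \alpha^{-1}(R^{\times}) \xrightarrow{\sim} R^{\times}$, this identifies $\bar\alpha^{-1}(S^{\times}) = \alpha^{-1}(R^{\times}) = P_0$. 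This identification is exactly what fails for a general surjection, and it is the crux of the argument.

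Next I would unwind strictness. By definition $(\pi_*P)^a \to Q$ is an isomorphism, and for discrete log rings the associated log structure of the pre-log structure $\bar\alpha$ is the pushout $P \oplus_{\bar\alpha^{-1}(S^{\times})} S^{\times}$. By the previous step this gives $Q \cong P \oplus_{P_0} S^{\times}$, with structure map $\phi\colon P_0 \xrightarrow[\sim]{\alpha} R^{\times} \xrightarrow{\pi} S^{\times}$ and with $\pi^{\flat}$ identified with $p \mapsto [(p,1)]$ via the universal property of logification. Since group completion is a left adjoint it preserves this pushout, and as $P_0 \cong R^{\times}$ and $S^{\times}$ are already groups I obtain $Q^{\gp} \cong P^{\gp} \oplus_{R^{\times}} S^{\times} = (P^{\gp}\times S^{\times})/N$, where $N = \{(u,\phi(u)^{-1}) : u \in R^{\times}\}$ and the map $P^{\gp} \to Q^{\gp}$ is $x \mapsto [(x,1)]$.

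Finally I would run the diagram chase. Integrality gives $P \hookrightarrow P^{\gp}$ and $Q \hookrightarrow Q^{\gp}$, so the pullback $Q \times_{Q^{\gp}} P^{\gp}$ is the submonoid $\{x \in P^{\gp} : [(x,1)] \in Q\}$ of $P^{\gp}$, and it remains to show this equals $P$. The inclusion of $P$ is clear. Conversely, if $[(x,1)]$ lies in $Q$, then $[(x,1)] = [(p,s)]$ in $Q^{\gp}$ for some $p \in P$ and $s \in S^{\times}$; by the description of $N$ this forces $x\,p^{-1} = u$ for some $u \in R^{\times}$, so that $x = p\cdot u$. As $u \in R^{\times} = P_0 \subseteq P$ and $p \in P$, the product $x = p\cdot u$ lies in $P$, as desired.

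The step I expect to be the main obstacle is the second one: extracting from the paper's definition of a strict morphism and from the associated-log-structure construction the explicit presentation $Q \cong P \oplus_{P_0} S^{\times}$, and in particular pinning the indexing monoid down as $P_0$ rather than the a priori larger $\bar\alpha^{-1}(S^{\times})$. Once the square-zero input has collapsed these two monoids, the computations of $Q^{\gp}$ and of the pullback are purely formal manipulations with pushouts of abelian groups.
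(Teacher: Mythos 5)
Your argument is correct, and its first half coincides with the paper's own proof: both use the square-zero hypothesis to get $\pi^{-1}(S^{\times})=R^{\times}$, hence $\bar{\alpha}^{-1}(S^{\times})=\alpha^{-1}(R^{\times})\cong R^{\times}$, and both then unwind strictness into the pushout presentation $Q\cong P\oplus_{R^{\times}}S^{\times}$. Where you genuinely diverge is the endgame. The paper never computes $Q^{\gp}$: it first proves that $\pi^{\flat}$ is surjective (a second appeal to the square-zero condition, lifting units of $S$ to units of $R$), and then shows bijectivity of $P\to Q\times_{Q^{\gp}}P^{\gp}$ by a chase with fraction representatives $[x_1,x_2]_{\gp}$ of elements of $Q^{\gp}$, using surjectivity of $\pi^{\flat}$ and integrality of $Q$ to produce a witness $v\in R^{\times}$ with $\pi(v)=1$. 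You instead compute $Q^{\gp}$ outright: group completion is a left adjoint, so it carries the pushout defining $Q$ to a pushout of abelian groups, and since $R^{\times}$ and $S^{\times}$ are already groups this yields $Q^{\gp}\cong(P^{\gp}\times S^{\times})/N$; cartesianness then collapses to a one-line coset computation. Your route is somewhat more economical — surjectivity of $\pi^{\flat}$ (and of $R^{\times}\to S^{\times}$) is never needed, the square-zero hypothesis enters exactly once, and the two integrality hypotheses are used in clearly separated places — at the modest cost of invoking compatibility of group completion with pushouts. One caveat applies equally to both arguments: strictness in this paper is defined via the homotopy-invariant logification of Construction~\ref{constr:logification}, so identifying $(\pi_*P)^a$ with the classical monoid pushout $P\oplus_{\alpha^{-1}(R^{\times})}S^{\times}$ for discrete integral inputs deserves a word (it holds because the pushout is along the group $\alpha^{-1}(R^{\times})\cong R^{\times}$, which acts freely, so the homotopy pushout is discrete); you rightly flag this identification as the delicate step, and the paper passes over it just as quickly.
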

\noindent If $(R,P,\alpha)$ is a discrete log ring, then we
write $c_R\colon R^\times \to P$ for the composite of the inverse of
the isomorphism $\alpha^{-1}(R^\times) \to R^\times$ and the canonical
map $\alpha^{-1}(R^\times) \to P$.
\begin{proof} Let $J = \ker \pi$.  Since $\pi$ is surjective and $J^2=0$, we have $\pi^{-1}(S^{\times}) \cong R^{\times}$. Together with the log condition on $(R,P,\alpha)$, this provides isomorphisms $ R^{\times} \cong \alpha^{-1}(R^{\times}) \cong \alpha^{-1} \pi^{-1} (S^{\times})$, and therefore strictness implies that we have a pushout of commutative monoids \[\xymatrix@-1pc{ R^{\times} \ar[rr]^-{c_R} \ar[d]_-{\pi^{\times}} && P \ar[d]^-{\pi^{\flat}} \\ S^{\times} \ar[rr]^-{c_{S}} && Q.}\]
Since both $R^{\times}$ and $S^{\times}$ are abelian groups, we have $Q \cong P \oplus S^{\times} /\!\sim$ where \[(x, u) \sim (x', u') \Leftrightarrow \exists \,v\in R^{\times} \text{ such that} \, c_R(v)x= x' \text{ and } \pi(v^{-1}) u = u'.\] Given $[x,u] \in P \oplus S^{\times} /\!\sim$, the square zero condition implies that there exists $v \in R^{\times}$ such that $\pi(v)=u$. Since $(x,u) \sim (c_R (v)x, 1)$, the morphism $\pi^{\flat}$ maps the element $c_R (v) x \in P$  to  $[x,u] \in Q \cong P \oplus S^{\times} /\!\sim$. Hence the morphism $\pi^{\flat}$ is surjective.

Consider the morphism of monoids \[\varphi\colon P \longrightarrow Q \times _{Q^{\gp}} P^{\gp}\qquad x \longmapsto (\pi^{\flat}(x), [x,1]_{\gp})\] where we have denoted by $x \mapsto [x, 1]_{\gp}$ the canonical map $P \to P^{\gp}$, with $P^{\gp}$ implicitly identified, as usual, with a quotient of $P \times P$.

  We will prove that $\varphi$ is bijective, hence a monoid isomorphism. Since $P$ is integral, the map $P \to P^{\gp}$ is injective, and thus so is $\varphi$. In order to prove surjectivity of $\varphi$, we first observe that if $(y, [x_1, x_2]_{\gp})$ is an arbitrary element in $Q \times_{Q^{\gp}} P^{\gp}$, then there exists $x \in P$ such that $\pi^{\flat}(x)= y$. It follows that $\pi^{\flat}(xx_2)= \pi^{\flat}(x_1)$ since $y$ and $[x_1, x_2]_{\gp}$ have the same image in $Q^{\gp}$, and $Q$ is integral. But under the identification $Q \cong P \oplus S^{\times} /\!\sim $, we have $\pi^{\flat}(x')= [x', 1]$ for any $x' \in P$, and hence $ [xx_2, 1]= [x_1, 1]$ in $P \oplus S^{\times}/\!\sim$. By definition of $\sim$, there exists $v \in R^{\times}$ such that \[c_R(v)xx_2= x_1 \quad\text{ and }\quad \pi(v^{-1}) 1= 1.\] In particular, we have $\pi(v)= 1$. Now, let us prove that $\varphi((c_R (v)x))= (y, [x_1, x_2])$. By definition, we have \[\varphi (c_R (v)x)= (\pi^{\flat}c_R (v) \pi^{\flat}(x), [c_R(v)x, 1]_{\gp}).\] Since $c_R(v)xx_2= x_1$, we have $[c_R(v)x, 1]_{\gp} = [x_1,x_2]_{\gp}$, and since $\pi^{\flat}c_R (v)= c_{S} \pi (v)= 1 \in Q$, we also have $\pi^{\flat}c_R (v) \pi^{\flat}(x) = \pi^{\flat}(x)=y$. This completes the proof of the surjectivity of $\varphi$. 
\end{proof}
If $(R,P)$ is a discrete log ring, then $c_R^{\gp}\colon R^\times \to P^\gp$ denotes the composite of the map $c_R\colon R^{\times} \to P$ from the log condition and the group completion map $P \to P^\gp$. 
\begin{lemma}\label{lemma:diginto} Let $(\pi,\pi^{\flat})\colon (R,P,\alpha)
  \rightarrow (S, Q,\beta)$ be a strict square zero extension of discrete integral
  log rings, and let $J = \ker\pi$ be the corresponding square zero
  ideal. Then \[\exp \colon J \longrightarrow R^{\times},\qquad \xi
  \longmapsto (1+\xi)\] and the maps $c_R^{\gp}$ and $c_{S}^{\gp}$
  induce a commutative diagram 
  \begin{equation}\label{eq:diginto}\xymatrix@-1pc{ J \ar[rr]^-{\exp} \ar[d] &&R^{\times} \ar[d]^-{\pi^{\times}} \ar[rr]^-{c_{R}^{\gp}} && P^{\gp} \ar[d] ^-{(\pi^{\flat})^\gp} \\
\{1\} \ar[rr] && S^{\times} \ar[rr]^-{c_{S}^{\gp}} && Q^{\gp}.   }
\end{equation}
of constant simplicial commutative monoids where both squares are
homotopy cartesian and homotopy cocartesian.
\end{lemma}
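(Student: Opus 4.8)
The plan is to reduce both assertions, for both squares, to the verification of two short exact sequences of abelian groups. Every corner of \eqref{eq:diginto} is in fact an abelian group: $J$ is the additive group of the square zero ideal, $R^{\times}$ and $S^{\times}$ are unit groups, and $P^{\gp}$, $Q^{\gp}$ and $\{1\}$ are groups. So I view the two squares as squares of constant simplicial abelian groups. Under Dold--Kan such an object is a chain complex concentrated in degree $0$, and for a square
\[\xymatrix@-1pc{A \ar[r]\ar[d] & B\ar[d]\\ C\ar[r] & D}\]
of them the homotopy pushout in $s\mathcal{AB}$ is the cokernel of $A \to B\oplus C$ as soon as this map is injective, while the homotopy pullback is the kernel of $B\oplus C \to D$ as soon as this map is surjective. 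Hence such a square is simultaneously homotopy cartesian and homotopy cocartesian in $s\mathcal{AB}$ exactly when $0 \to A \to B\oplus C \to D \to 0$ is short exact. By Corollary~\ref{cor:hocart-sAb-sM} homotopy cocartesianness transfers to $s\mathcal{M}$, and since the forgetful functor $s\mathcal{AB}\to s\mathcal{M}$ preserves fibrations and weak equivalences (both created in $s\mathcal S$ by Proposition~\ref{prop:model-str-on-sR-and-sM}) it is right Quillen and preserves homotopy cartesian squares; all corners are fibrant, being constant on discrete Kan complexes. Thus it suffices to produce the two short exact sequences.

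For the left square the relevant sequence is $0 \to J \xrightarrow{\exp} R^{\times} \xrightarrow{\pi^{\times}} S^{\times} \to 0$. The map $\exp$ is a group homomorphism since $J^2=0$ gives $(1+\xi)(1+\xi')=1+\xi+\xi'$, and it is injective because $1+\xi=1$ forces $\xi=0$; each $1+\xi$ is a unit with inverse $1-\xi$. Its image is exactly $\ker\pi^{\times}$, as $\pi(u)=1$ means $u\in 1+J$. Finally $\pi^{\times}$ is surjective: given $s\in S^{\times}$, lift $s$ and $s^{-1}$ to $r,r'\in R$; then $rr'\in 1+J$ is a unit, so $r$ is a unit mapping to $s$.

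For the right square I start from the pushout square of commutative monoids
\[\xymatrix@-1pc{R^{\times} \ar[r]^-{c_R}\ar[d]_-{\pi^{\times}} & P\ar[d]^-{\pi^{\flat}}\\ S^{\times} \ar[r]^-{c_S} & Q}\]
exhibited in the proof of Lemma~\ref{lemma:strict-exact}. Group completion $(-)^{\gp}\colon \mathcal M \to \mathcal{AB}$ is a left adjoint, so it preserves this pushout, and since $R^{\times}$ and $S^{\times}$ are already groups the result is precisely the right square of \eqref{eq:diginto}, now realized as a pushout in $\mathcal{AB}$. A pushout of abelian groups is right exact, so $0 \to R^{\times} \xrightarrow{(c_R^{\gp},-\pi^{\times})} P^{\gp}\oplus S^{\times} \to Q^{\gp} \to 0$ is exact except possibly on the left. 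But $c_R\colon R^{\times}\xrightarrow{\ \cong\ }\alpha^{-1}(R^{\times})\hookrightarrow P$ is injective, and $P\to P^{\gp}$ is injective because $P$ is integral; hence $c_R^{\gp}$, and therefore $(c_R^{\gp},-\pi^{\times})$, is injective, yielding the short exact sequence.

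The only non-formal inputs are the monoid pushout from Lemma~\ref{lemma:strict-exact} and the integrality of $P$, which is exactly what converts the right-exact group-completed sequence into a left-exact one; this is where I expect the main subtlety to sit, since without integrality $c_R^{\gp}$ need not be injective and the right square could fail to be cartesian. Everything else — the dictionary between homotopy (co)cartesian squares of constant simplicial abelian groups and short exact sequences, and the passage between $s\mathcal{AB}$ and $s\mathcal{M}$ — is formal.
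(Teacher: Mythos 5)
Your proof is correct, and for the left-hand square it is essentially the paper's own argument: both reduce to the short exact sequence $0 \to J \xrightarrow{\exp} R^{\times} \xrightarrow{\pi^{\times}} S^{\times} \to 0$ of discrete abelian groups and then transfer cocartesianness to $s\mathcal M$ by Corollary~\ref{cor:hocart-sAb-sM} (you prove surjectivity of $\pi^{\times}$ explicitly, which the paper leaves implicit). For the right-hand square, however, you take a genuinely different route. The paper interpolates $P$ and $Q$ as in its diagram \eqref{eq:exact-square-zero}: the middle square there is cartesian by the log and square-zero conditions, the square of group-completion maps is cartesian by the statement of Lemma~\ref{lemma:strict-exact}, and composing gives that the right square of \eqref{eq:diginto} is homotopy cartesian; homotopy cocartesianness is then deduced by a pasting argument from the outer square and the already-established left square. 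You bypass the statement of Lemma~\ref{lemma:strict-exact} entirely: starting from the monoid pushout $Q \cong P \coprod_{R^{\times}} S^{\times}$ (which is the unwinding of strictness appearing as an intermediate step in the proof of that lemma), you apply $(-)^{\gp}$, which preserves pushouts as a left adjoint and fixes the groups $R^{\times}$ and $S^{\times}$, and then promote the resulting right-exact sequence to the short exact sequence $0 \to R^{\times} \to P^{\gp}\oplus S^{\times} \to Q^{\gp} \to 0$ using integrality of $P$ to get injectivity of $c_R^{\gp}$. This is a genuine streamlining: both squares are handled uniformly by one dictionary (bicartesian $\Leftrightarrow$ short exact), the pasting argument disappears, and only integrality of $P$ is consumed, whereas the paper's route through Lemma~\ref{lemma:strict-exact} also uses integrality of $Q$. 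What the paper's arrangement buys in exchange is that Lemma~\ref{lemma:strict-exact} is isolated as an independent statement, which it must be in any case, since the cartesian square $P \cong Q\times_{Q^{\gp}}P^{\gp}$ is needed again in the proof of Theorem~\ref{thm:strict-sq-zero-log-derivation}. Two harmless remarks on your write-up: for discrete abelian groups the homotopy pullback in $s\mathcal{AB}$ is the ordinary pullback, i.e.\ the kernel of $B\oplus C \to D$, with no surjectivity hypothesis needed; and your transfer of homotopy cartesianness along the right Quillen forgetful functor $s\mathcal{AB} \to s\mathcal M$ is valid, though the paper sidesteps it by checking cartesianness of the squares of constant simplicial monoids directly in $s\mathcal M$.
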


\begin{proof}
  It is immediate from the square zero condition that the left hand
  square is cartesian.  Since it consists of constant simplicial
  monoids, it is also homotopy cartesian. Hence $J \to R^{\times} \to
  P^{\gp}$ is a homotopy fiber sequence of simplicial abelian groups,
  and it follows that the left hand square is also homotopy
  cocartesian as a square of simplicial abelian groups. By
  Corollary~\ref{cor:hocart-sAb-sM}, it is therefore a homotopy cocartesian
  square of simplicial commutative monoids.

  For the right hand square, we consider the commutative diagram 
\begin{equation}\label{eq:exact-square-zero}\xymatrix@-1pc{ J \ar[rr]^-{\exp} \ar[d] &&R^{\times} \ar[d]^-{\pi^{\times}} \ar[rr]^-{c_{R}}&& P \ar[d]^-{\pi^{\flat}}\ar[rr] && P^{\gp} \ar[d] ^-{(\pi^{\flat})^\gp} \\
\{1\} \ar[rr] && S^{\times} \ar[rr]^-{c_{S}} && Q \ar[rr] &&Q^{\gp}.   }
\end{equation}
Since $(\pi,\pi^\flat)$ is a strict square zero extension between discrete integral log rings, Lemma~\ref{lemma:strict-exact} implies that the right hand square
in~\eqref{eq:exact-square-zero} is cartesian. The middle square in~\eqref{eq:exact-square-zero} is cartesian since the log
conditions on $(R,P,\alpha)$ and $(S,Q,\beta)$ and the square zero condition on
$\pi$ provide isomorphisms \[(\pi^\flat)^{-1}(S^\times) \cong
(\pi^\flat)^{-1}(\beta^{-1}(S^\times)) \cong
\alpha^{-1}\pi^{-1}(S^{\times}) \cong \alpha^{-1}(R^\times) \cong
R^{\times}.\] Hence the right hand square in~\eqref{eq:diginto} is
homotopy cartesian. Moreover, since we already observed that the left
hand square in~\eqref{eq:exact-square-zero} is cartesian, it follows
that the outer square is cartesian. Arguing as above, we deduce that
the outer square is homotopy cocartesian as a square of constant
simplicial commutative monoids. Since we already know that the left
hand square in~\eqref{eq:diginto} is homotopy cocartesian, it follows
that the right hand square in~\eqref{eq:diginto} is homotopy
cocartesian.
\end{proof}

We are now able to prove the promised log-analog of Proposition~\ref{prop:cruciallemma}.

\begin{theorem}\label{thm:strict-sq-zero-log-derivation}
Let $(\pi,\pi^\flat)\colon (R,P) \rightarrow (S, Q)$ be a strict square zero extension of discrete integral log rings, and let $J = \ker \pi$ be the corresponding square zero ideal. Then there exists a derivation \[(d,d^\flat) \in \pi_{0}\Map_{(R,P)/s\mathcal{P}/(S,Q)}((S,Q),(S \oplus J[1], Q \oplus J[1]))\] such that there exists an isomorphism in $\mathrm{Ho}(s\mathcal{P}/(S,Q))$ between $(\pi,\pi^\flat)$ and the canonical projection \[(p^{\phantom{\flat}}_d, p^\flat_d)\colon (S \oplus_{d} J, Q\oplus_{d^\flat} J)\rightarrow (S, Q),\] where $(p^{\phantom{\flat}}_d, p^\flat_d)$ is defined by the homotopy pullback diagram 
\begin{equation}\label{eq:square} \xymatrix@-1pc{(S \oplus_{d} J, Q\oplus_{d^\flat} J) \ar[r] \ar[d]_-{(p^{\phantom{\flat}}_d, p^\flat_d)} & (S,Q)\ar[d]^-{\underline{0}} \\ (S,Q) \ar[r]^-{(d,d^\flat)} & (S \oplus J[1], Q \oplus J[1]).}
\end{equation}
\end{theorem}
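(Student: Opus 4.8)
The plan is to assemble the log derivation $(d,d^\flat)$ one component at a time, taking the ring part $d$ from Proposition~\ref{prop:cruciallemma} and the monoid part $d^\flat$ from the homotopy fiber sequence packaged in Lemma~\ref{lemma:diginto}, and then to verify the claimed isomorphism of~\eqref{eq:square} with $(\pi,\pi^\flat)$. Since weak equivalences in the injective pre-log model structure are detected on the underlying ring and the underlying monoid, it will suffice to produce a single map of pre-log simplicial rings whose ring component realizes the identification $R\simeq S\oplus_d J$ and whose monoid component realizes an identification $P\simeq Q\oplus_{d^\flat}J$; the integrality hypothesis enters only through Lemmas~\ref{lemma:strict-exact} and~\ref{lemma:diginto}, which we invoke.

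First I would apply Proposition~\ref{prop:cruciallemma} to the underlying square zero extension $\pi\colon R\to S$ of commutative rings with kernel $J$. This produces a derivation $d\in\pi_0\Map_{R/s\mathcal{A}/S}(S,S\oplus J[1])$ together with an isomorphism in $\mathrm{Ho}(s\mathcal{R}/S)$ between $\pi$ and the projection $p_d\colon S\oplus_d J\to S$. This fixes the ring part of the desired log derivation and identifies the ring component of the pullback~\eqref{eq:square} with $R$ over $S$.

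Next I would construct the monoid component $d^\flat$. By Lemma~\ref{lemma:diginto} the right hand square in~\eqref{eq:diginto} is homotopy cartesian, so the homotopy fibers of its two horizontal maps agree; combined with the homotopy cartesian left hand square, which identifies the homotopy fiber of $\pi^\times\colon R^\times\to S^\times$ with $J$, this exhibits a homotopy fiber sequence $J\to P^\gp\to Q^\gp$ of simplicial abelian groups. Such a sequence is classified by a map $\kappa\colon Q^\gp\to J[1]$, which I would package as a derivation $d^\gp\colon Q^\gp\to Q^\gp\oplus J[1]$ whose homotopy pullback against the zero section recovers $P^\gp$. Restricting $\kappa$ along $Q\to Q^\gp$ gives $d^\flat\colon Q\to Q\oplus J[1]$. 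Using the exactness $P\cong Q\times_{Q^\gp}P^\gp$ of Lemma~\ref{lemma:strict-exact} together with a standard pasting of homotopy cartesian squares, I would then identify $Q\oplus_{d^\flat}J$ with $P$ over $Q$ in $s\mathcal{M}$; Corollary~\ref{cor:hocart-sAb-sM} is what licenses moving the relevant homotopy (co)cartesian squares between $s\mathcal{AB}$ and $s\mathcal{M}$.

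The main obstacle, and the step I would treat most carefully, is coherence: I must check that $d$ and $d^\flat$ assemble into one map $(d,d^\flat)$ of pre-log simplicial rings, that is, that the monoid derivation is compatible with the ring derivation through the structure maps into $(S\oplus J[1],\cdot)$. The tension is that $J$ enters additively on the monoid side, as $J[1]^{\mathrm{add}}$, but multiplicatively on the ring side through $\exp\colon\xi\mapsto 1+\xi$; the commuting diagram~\eqref{eq:diginto}, with its columns $\exp$, $c_R^\gp$, and $c_S^\gp$, is exactly the datum reconciling the two and forcing $\kappa$ to match $d$. Granting this compatibility, the forgetful functors send~\eqref{eq:square} to the ring homotopy pullback $S\oplus_d J\simeq R$ and the monoid homotopy pullback $Q\oplus_{d^\flat}J\simeq P$ with matching structure maps, so the resulting map of pre-log simplicial rings is a weak equivalence on both components and hence an isomorphism in $\mathrm{Ho}(s\mathcal{P}/(S,Q))$ between $(\pi,\pi^\flat)$ and $(p^{\phantom{\flat}}_d,p^\flat_d)$, as required.
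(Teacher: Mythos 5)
Your overall architecture matches the paper's: the ring part $d$ comes from Proposition~\ref{prop:cruciallemma}, the monoid part from the homotopy (co)cartesian squares of Lemma~\ref{lemma:diginto}, and the final equivalence check proceeds componentwise by pasting homotopy cartesian squares via Lemma~\ref{lemma:strict-exact} and Corollary~\ref{cor:hocart-sAb-sM}. But there is a genuine gap at exactly the step you yourself flag as ``the main obstacle'' and then dispose of with ``granting this compatibility.'' Your $d^\flat$ is produced abstractly, as the restriction to $Q$ of a classifying map $\kappa\colon Q^{\gp}\to J[1]$ for the extension $J\to P^{\gp}\to Q^{\gp}$ of simplicial abelian groups. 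Such a $\kappa$ is determined by the monoid-theoretic data alone, whereas $d$ was produced by a completely independent argument on the ring side (via $S\otimes_{R}^{\mathbb{L}}S$ in Appendix~\ref{sec:appendixA}). Contrary to what you assert, nothing in diagram~\eqref{eq:diginto} ``forces $\kappa$ to match $d$'': that diagram involves only $\exp$, $c_{R}^{\gp}$, $c_{S}^{\gp}$ and the maps induced by $\pi$, and makes no reference whatsoever to $d$ or to its $J[1]$-component $\delta$. The required compatibility --- commutativity of the square formed by $d^\flat$, $d$, $\beta\colon Q\to S$ and the structure map $\alpha_{J[1]}\colon Q\oplus J[1]\to S\oplus J[1]$, i.e.\ the relation $\delta(\beta(q))=\beta(q)\cdot\delta^{\flat}_{J[1]}(q)$, where $\delta^{\flat}_{J[1]}$ is the $J[1]$-component of $d^\flat$ --- is a nontrivial condition tying the two constructions together, and it is precisely what makes $(d,d^\flat)$ a single derivation in $\pi_{0}\Map_{(R,P)/s\mathcal{P}/(S,Q)}$. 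Without it you have two separate equivalences $R\simeq S\oplus_{d}J$ and $P\simeq Q\oplus_{d^\flat}J$, but no map $(R,P)\to(S\oplus_{d}J,\,Q\oplus_{d^\flat}J)$ in $s\mathcal{P}/(S,Q)$, hence no isomorphism in $\mathrm{Ho}(s\mathcal{P}/(S,Q))$ as the theorem demands.

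The paper closes exactly this hole by constructing $d^\flat$ explicitly in terms of $\delta$: using the universal property of the homotopy pushout $Q^{\gp}\simeq P^{\gp}\coprod^{h}_{R^{\times}}S^{\times}$ from Lemma~\ref{lemma:diginto}, it defines $\epsilon^\flat\colon Q^{\gp}\to Q\oplus J[1]$ by the trivial map on $P^{\gp}$ and by the logarithmic-derivative formula $v\mapsto(c_{S}(v),\,v^{-1}\delta(v))$ on $S^{\times}$, the identity $\delta\pi=0$ being what allows these to glue. With this choice the compatibility square with $d$ is verified by restricting to the two pieces $P^{\gp}$ and $S^{\times}$ of the pushout, the under-$(R,P)$ condition $d^{\flat}\circ\pi^{\flat}=\underline{0}\circ\pi^{\flat}$ holds, and one obtains an induced map $(\chi,\chi^{\flat})$ whose components are then shown to be equivalences --- your pasting of homotopy cartesian squares is essentially the paper's argument that $\chi^{\flat}$ is an equivalence, so that part of your proposal is sound. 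To repair your proof, replace the abstract classifying map by this explicit construction (or supply an actual proof that your $\kappa$ can be rectified to satisfy the compatibility, which amounts to redoing the same computation).
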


\begin{proof} In order to simplify the exposition, we do not make explicit some of the cofibrant replacements needed to represent maps in homotopy categories by maps in the relevant model categories.  By Proposition~\ref{prop:cruciallemma}, there exists a ring derivation $d\colon S \to S \oplus J[1]$ satisfying the statement of that Proposition. Let $\delta\colon S \to J[1]$ denote its $J[1]$ component.  By Lemma~\ref{lemma:diginto}, we have a homotopy pushout diagram  \[\xymatrix@-1pc{R^{\times} \ar[rr]^-{c_{R}^\gp} \ar[d]_-{\pi^{\times}} && P^\gp \ar[d]^-{(\pi^{\flat})^\gp} \\ S^{\times} \ar[rr]^-{c_{S}^\gp} && Q^\gp.}\] Using its universal property and $\delta \pi = 0$, it follows that
\[0 \colon P^\gp \longrightarrow Q \oplus J[1]\qquad \text{and}\qquad S^{\times} \longrightarrow Q \oplus J[1],\quad v \longmapsto (c_{S}(v), v^{-1}\delta(v)) \]
 define a homomorphism $\epsilon^\flat\colon Q^\gp \to Q \oplus J[1]$ whose precomposition with $Q \to Q^\gp$ will be denoted by $\delta^\flat\colon Q \to Q \oplus J[1]$. Using this map, we define
 \[d^{\flat} = (\id_Q,\delta^\flat)\colon Q \longrightarrow Q \oplus J[1].\] Since $\beta \circ c_{S}$ is the canonical inclusion map $S^{\times} \to S$, one can use the restrictions to $P^\gp$ and $S^{\times}$ in the above homotopy pushout to check that the diagram \[\xymatrix@-1pc{
Q \ar[d]_-{\beta} \ar[r]^-{d^{\flat}} & Q \oplus J[1] \ar[d]^-{\alpha_{J[1]}}\\ 
S \ar[r]^-{d} & S \oplus J[1] 
}\] commutes. Here  $\alpha_{J[1]}\colon Q \oplus J[1] \to S \oplus J[1], \;(s, \xi) \mapsto (\beta(s), \beta(s)\xi)$ denotes the log-structure map. This implies that $(d, d^{\flat})$ is indeed a log-derivation.

 Next we use $(d,d^\flat)$ and the trivial log derivation
 $\underline{0}$ to form the homotopy
 pullback~\eqref{eq:square}. Since the forgetful functors from
 simplicial pre-log rings to simplicial commutative rings and simplicial
 commutative monoids preserve pullbacks, both the ring and the monoid component
 of~\eqref{eq:square} are homotopy pullbacks in the respective
 categories.  By Proposition~\ref{prop:cruciallemma} we get an
 isomorphism \[\xymatrix@-1pc{R \ar[rr]^-{\chi} \ar[dr]_-{\pi} & & S \oplus_{d} J \ar[dl]^{p_d} \\~ & S & ~}\] in
 $\mathrm{Ho}(s\mathcal{A}/S)$. Since $d^{\flat} \circ \pi^{\flat}
 = (\pi^{\flat}, 0)= \underline{0}\circ \pi^{\flat} $, we also get an
 induced map $\chi^{\flat}\colon P \to Q\oplus_{d^\flat} J$ of simplicial monoids
 over $Q$, where $ Q\oplus_{d^\flat} J$
 maps to $Q$ via
 $p^{\flat}_d$. Moreover, since the
 following diagram commutes
\[ \xymatrix@!0@+.5pc{ P \ar[dd]_-{\pi^{\flat}} \ar[rr]^-{\pi^{\flat}} \ar[rd]^-{\alpha} & & Q \ar'[d][dd]^-{d^{\flat}} \ar[rd]^-{\beta} \\
& R \ar[dd]_(.25){\pi} \ar[rr]^(.25){\pi} & & S \ar[dd]^-{d} \\
Q \ar'[r][rr]^(-.5){\underline{0}^{\flat}} \ar[rd]_-{\beta} && Q \oplus J[1] \ar[rd]_-{\alpha_{J[1]}} \\
& S \ar[rr]_-{0} && S\oplus J[1], }
\] 
we get that the pair $(\chi, \chi^{\flat})\colon (R,P) \to (S \oplus_{d} J, Q \oplus_{d^\flat} J)$ is indeed a map in $s\mathcal{L}/(S,Q)$.

We already know from Proposition~\ref{prop:cruciallemma} that $\chi$ is an equivalence of simplicial rings. To prove that $\chi^{\flat}$ is an equivalence, we consider the following commutative diagram of simplicial commutative monoids: \begin{equation}\label{eq:fourfold-square}\xymatrix@-1pc{J \ar[rr]^-{\textrm{exp}} \ar[d] && R^{\times} \ar[rr]^-{c_R} \ar[d]^-{\pi^{\times}} && P \ar[rr] \ar[d]^\pi&& P^\gp \ar[d]^-{\pi^{\flat}} \ar[rr] &&\{1\} \ar[d] \\
 \{1\} \ar[rr] && S^{\times} \ar[rr]^-{c_S} && Q \ar[rr] && Q^\gp \ar[rr]^-{\epsilon^\flat}  && J[1].}
\end{equation}
The outer square is homotopy cartesian and homotopy cocartesian as a
square of simplicial abelian groups. Using
Corollary~\ref{cor:hocart-sAb-sM} and Lemma~\ref{lemma:diginto}, it
follows that the rightmost square in~\eqref{eq:fourfold-square} is
homotopy cocartesian. Since it consists of simplicial abelian groups,
it is also homotopy cartesian in $s\mathcal{AB}$ and hence also in
$s\mathcal M$. The third square (spanned by $\pi$ and $\pi^\flat$) is
cartesian by Lemma~\ref{lemma:strict-exact}. Since it
consists of discrete simplicial monoids, it is also homotopy
cartesian. It follows that the composite  
\[\xymatrix@-1pc{P \ar[r] \ar[d]_-{\pi^{\flat}} & \{1\} \ar[d] \\
Q \ar[r]^-{\delta^{\flat}} & J[1]}\]
of the two rightmost squares is homotopy cartesian.

On the other hand, by definition of $Q\oplus_{d^\flat} J $, the outer square in the diagram 
\[\xymatrix@-1pc{Q\oplus_{d^\flat} J  \ar[d]_{p_d^\flat} \ar[rr] && Q  \ar[d]^{\underline{0}^\flat} \ar[rr] && \{1\} \ar[d]\\
Q \ar[rr]^-{d^\flat} && \ar[rr]^-{\mathrm{proj}} Q\oplus J[1] && J[1]
}\]
is homotopy cartesian. Now it is enough to observe that, since by definition of $\chi^{\flat}$, we have $p^{\flat} \circ \chi^{\flat}= \pi^{\flat}$, and the following diagram 
\[ \xymatrix@!0@+.5pc{ P \ar[dd] \ar[rr]^-{\pi^{\flat}} \ar[rd]^-{\chi^{\flat}} & & Q \ar'[d][dd]^-{\delta^{\flat} } \ar[rd]^-{\textrm{id}} \\
& Q\oplus_{d^\flat} J \ar[dd] \ar[rr]^(.4){p^{\flat}} & & Q \ar[dd]^-{\delta^{\flat}} \\
\{ 1\} \ar'[r][rr] \ar[rd] && J[1] \ar[rd]^-{\textrm{id}} \\
& \{ 1 \} \ar[rr] &&  J[1] }\] 
commutes. The front and rear faces of this diagram are homotopy pullbacks, hence we conclude that $\chi^{\flat}$ is an equivalence of simplicial commutative monoids. 
\end{proof}

\begin{remark} \label{+preciso} Note that the homotopy pullback in Theorem~\ref{thm:strict-sq-zero-log-derivation} is a homotopy pullback in $s\mathcal{L} / (S, Q)$, since the forgetful functor $s\mathcal{L} / (S, Q) \to s\mathcal{L}$ creates homotopy pullbacks. Moreover, Theorem~\ref{thm:strict-sq-zero-log-derivation} holds true if we work in the undercategory $(A, M)/ s\mathcal{L}$, where $(A, M)$ is any simplicial log ring, i.e., $(\pi,\pi^\flat)$ is a map in $(A, M)/ s\mathcal{L}$ and a strict square zero extension of discrete log rings, and the homotopy pullback defining $(S \oplus_{d} J, Q\oplus_{d^\flat} J)$ in Theorem~\ref{thm:strict-sq-zero-log-derivation} is taken in $(A, M)/ s\mathcal{L}$ (or equivalently in $(A, M)/ s\mathcal{L}/ (S, Q)$).
\end{remark}

\section{Derived log-\'etale maps}

Following \cite[3.2]{ka}, we give the following

\begin{definition}
  \label{katologetale}
  A morphism $(f,f^{\flat}) \colon (A,M)\rightarrow (B,N)$ of
  discrete log rings will be called \emph{formally
    log-\'etale} if for any strict square zero extension of discrete integral
  log rings $(g, g^{\flat}) \colon (R,P) \rightarrow (S, Q)$ and
  every commutative diagram
  \[
  \xymatrix@-1pc{
    (S,Q) & \ar[l] (B,N)\\
    (R,P) \ar[u]^{(g,g^{\flat})} & (A,M) \ar[u]_{(f,f^{\flat})}
    \ar[l] }
  \]
  there exists a unique $(h, h^{\flat}) \colon (B,N) \to (R,P)$
  such that the resulting triangles commute.
 
  A morphism $(f,f^{\flat}) \colon (A,M)\rightarrow (B,N)$ of
  discrete log rings will be called \emph{log-\'etale} if
  it is formally log-\'etale and the underlying map $f\colon A
  \rightarrow B$ is finitely presented.\footnote{We here have adopted 
    the convention of Gabber-Ramero \cite{gabber} of not imposing any 
    finiteness condition on the monoid map.}
\end{definition}

\begin{remark}
  \label{katoscomp}
  Note that Kato defines \'etale morphisms only in the category of 
  \emph{fine} log rings, i.e., both $(f,f^{\flat})$ and $(g, g^{\flat})$ 
  are required to be morphisms of fine log rings. Thus if a map of fine 
  log ring $(A,M) \to (B,N)$ is \'etale in the sense of Definition 
  \ref{katologetale}, then it is \'etale in Kato's sense. Since there 
  seems to be no homotopically meaningful way to define the notion of a fine 
   log structure, we have chosen the previous more general 
  definition, i.e, we just probe using square zero extension of integral, but not necessarily fine, log rings.
\end{remark}

For log simplicial rings, we adopt the following
\begin{definition}
  A morphism $(f,f^{\flat}) \colon (A,M)\rightarrow (B,N)$ of
  log simplicial  rings will be called \emph{derived
    formally log-\'etale} if $\mathbb{L}_{(B,N)/(A,M)}$ is
  trivial.

  A morphism $(f,f^{\flat}) \colon (A,M)\rightarrow (B,N)$ of
  log simplicial rings will be called \emph{derived log
    \'etale} if it is formally derived log-\'etale, and the underlying 
  morphism $f$ is homotopically finitely presented \cite[Definition 
  1.2.3.1]{hagII}.
\end{definition}

\begin{proposition}\label{basicpropertiesetale}
The composition of two derived log-\'etale maps is derived log-
  \'etale.  If $f\colon (A,M)\rightarrow (B,N)$ and $g\colon 
  (A,M)\rightarrow (C,O)$ are maps of simplicial pre-log algebras, and 
  $f$ is derived log-\'etale, then the homotopy base change map 
  \[(C,O)\longrightarrow (B,N) \textstyle\coprod^{h}_{(A,M)} (C,O)\] is 
  derived log-\'etale.
\end{proposition}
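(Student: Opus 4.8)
The plan is to treat the two assertions separately, in each case using the relevant part of Proposition~\ref{basictriangles} to handle the cotangent-complex (formally log-\'etale) condition and standard closure properties of homotopically finitely presented morphisms of simplicial commutative rings to handle the finiteness condition. Throughout I would use that both conditions are homotopy invariant and unaffected by logification, so that one may pass freely between the pre-log and the log setting as convenient.

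For the statement about compositions, suppose $(A,M) \to (B,N) \to (C,O)$ are both derived log-\'etale. First I would show the composite is derived formally log-\'etale. In the transitivity homotopy cofiber sequence of Proposition~\ref{basictriangles}(i),
\[
C\otimes_{B}^{\mathbb{L}}\mathbb{L}_{(B,N)/(A,M)} \longrightarrow \mathbb{L}_{(C,O)/(A,M)} \longrightarrow \mathbb{L}_{(C,O)/(B,N)},
\]
both outer terms vanish: the left one because $\mathbb{L}_{(B,N)/(A,M)}$ is trivial by hypothesis and $C\otimes_{B}^{\mathbb{L}}(-)$ preserves the zero object, and the right one because $\mathbb{L}_{(C,O)/(B,N)}$ is trivial by hypothesis. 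Hence $\mathbb{L}_{(C,O)/(A,M)}$ is trivial, so the composite is derived formally log-\'etale. It then remains to note that the underlying ring map of the composite is homotopically finitely presented, which is immediate from the closure of homotopically finitely presented morphisms under composition~\cite[\S 1.2.3]{hagII}.

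For the base-change statement, form the homotopy pushout $(S,Q) = (B,N)\coprod^{h}_{(A,M)}(C,O)$ and write $b\colon (C,O)\to(S,Q)$ for the induced map. For the formally log-\'etale condition I would invoke the base change isomorphism of Proposition~\ref{basictriangles}(ii), which yields $\mathbb{L}_{(S,Q)/(C,O)} \simeq S\otimes_{B}^{\mathbb{L}}\mathbb{L}_{(B,N)/(A,M)}$ in the homotopy category of simplicial $S$-modules; since $\mathbb{L}_{(B,N)/(A,M)}$ is trivial, so is $\mathbb{L}_{(S,Q)/(C,O)}$, and $b$ is derived formally log-\'etale. For the finiteness condition I would use that the underlying-ring functor $s\mathcal P \to s\mathcal R$ is left Quillen for the injective pre-log model structure, its right adjoint being the trivial pre-log structure functor $B \mapsto (B,(B,\cdot))$. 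Because logification leaves the underlying ring unchanged, this functor remains left Quillen for the log model structure, and therefore the underlying ring of $(S,Q)$ is the homotopy pushout $B\otimes_{A}^{\mathbb{L}}C$ of the underlying rings. The underlying map of $b$ is thus the base change along $A\to C$ of the homotopically finitely presented map $A\to B$, and hence is itself homotopically finitely presented by the stability of such morphisms under homotopy base change~\cite[\S 1.2.3]{hagII}. Consequently $b$ is derived log-\'etale.

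The two applications of Proposition~\ref{basictriangles} and the invocations of the closure properties of homotopically finitely presented morphisms are routine. The one point that genuinely requires care is the identification of the underlying ring of the homotopy pushout formed in $s\mathcal L$ with the homotopy pushout $B\otimes_{A}^{\mathbb{L}}C$ of the underlying rings. I expect the main obstacle to be checking that the forgetful functor to $s\mathcal R$ is left Quillen for the \emph{log} model structure, which reduces to the observation that every log equivalence has underlying weak equivalence of rings precisely because logification does not alter the underlying ring.
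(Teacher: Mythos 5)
Your proof is correct and takes essentially the same route as the paper's own proof: Proposition~\ref{basictriangles}(i) and~(ii) dispose of the cotangent complex conditions, and closure of homotopically finitely presented ring maps under composition and homotopy base change disposes of the finiteness conditions. The details you add beyond the paper's terse argument (the two-out-of-three reasoning in the transitivity cofiber sequence, and the identification of the underlying ring of the homotopy pushout with $B\otimes_{A}^{\mathbb{L}}C$ via the left Quillen forgetful functor) are precisely the points the paper leaves implicit, and they are argued correctly.
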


\begin{proof} First of all, observe that being of finite presentation is stable under composition and base-change. The remaining statements about the cotangent complex, follow from the transitivity sequence, and from the so-called flat base-change, i.e., from Proposition~\ref{basictriangles}.
\end{proof}

Since a map of log simplicial rings $(f,f^{\flat})\colon (A,M) \to (B,N)$ 
is strict if and only if the induced map of pre-log rings $(B,f_*(M)) 
\to (B,N)$ is a weak equivalence in the log model structure, it follows 
that strict maps are preserved under base change:

\begin{lemma}
  \label{lem:StrictBaseChange}
  Let
  \[\xymatrix@-1pc{ (A,M) \ar[d] \ar[r] & (B,N) \ar[d] \\ (R,P) \ar[r]
    & (S,Q)}\] be a homotopy pushout of log simplicial rings. If
  $(A,M) \to (B,N)$ is strict, then so is $(R,P) \to (S,Q)$.
\end{lemma}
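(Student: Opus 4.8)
The plan is to reduce everything to the characterization of strictness recorded immediately before the statement: a map of log simplicial rings is strict precisely when the comparison map $(B, f_*M) \to (B,N)$ is a weak equivalence in the log model structure. Write $f\colon A \to B$, $g\colon A \to R$, and $u\colon R \to S$ for the underlying ring maps. The hypothesis says that $\iota\colon (B, f_*M) \to (B,N)$, given by $\mathrm{id}_B$ on rings and $f^{\flat}\colon M \to N$ on monoids, is a log equivalence. The goal is to prove that the analogous comparison $(S, u_*P) \to (S,Q)$ is a log equivalence as well, as this is exactly strictness of $(R,P)\to(S,Q)$.

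The key observation is that the pre-log ring $(S, u_*P)$ is itself a homotopy pushout. Note first that $\iota$ sits under the given map, in the sense that the composite of $(A,M)\xrightarrow{(f,\mathrm{id}_M)}(B, f_*M)$ with $\iota$ equals the original map $(A,M)\to(B,N)$. I would then compute the homotopy pushout of $(R,P) \leftarrow (A,M) \xrightarrow{(f,\mathrm{id}_M)} (B, f_*M)$ in the injective pre-log model structure: its ring component is $R\otimes^{\mathbb{L}}_A B \simeq S$, and its monoid component is $P\coprod^{h}_M M \simeq P$, the homotopy pushout along the identity $\mathrm{id}_M$. The resulting structure map is $P \to (R,\cdot)\to(S,\cdot)$, so this homotopy pushout is precisely $(S, u_*P)$. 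By hypothesis $(S,Q)$ is the homotopy pushout of $(R,P)\leftarrow(A,M)\to(B,N)$; since the identity functor $s\mathcal P^{\mathrm{inj}}\to s\mathcal L$ is left Quillen and preserves homotopy pushouts, both pushouts may be regarded as objects of $\mathrm{Ho}(s\mathcal L)$.

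With this in hand, the comparison $(S, u_*P)\to(S,Q)$ is identified with the derived cobase change along $(A,M)\to(R,P)$ applied to $\iota$. Cobase change along a fixed map is the left Quillen functor ${-}\coprod_{(A,M)}(R,P)\colon (A,M)/s\mathcal L \to (R,P)/s\mathcal L$, so its total left derived functor preserves weak equivalences. Applying it to the log equivalence $\iota$ therefore produces a log equivalence $(S, u_*P)\to(S,Q)$, which is exactly the desired strictness of $(R,P)\to(S,Q)$.

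The step requiring the most care is the identification of $(S, u_*P)$ with the pre-log homotopy pushout, i.e.\ verifying that the homotopy pushout of monoids along $\mathrm{id}_M$ recovers $P$ together with the correct structure map $P \to (S,\cdot)$, and that computing the pushout in the pre-log model structure legitimately represents the log homotopy pushout. Once these compatibilities are secured the conclusion is formal: strictness is the condition that a single map be a log equivalence, and derived cobase change transports this equivalence from the $f$-side of the square to the $g$-side.
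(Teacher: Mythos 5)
Your proof is correct and takes essentially the same route as the paper's: both arguments hinge on identifying the direct-image pre-log ring $(S,u_*P)$ as the homotopy pushout of $(R,P) \leftarrow (A,M) \to (B,f_*M)$ and then transporting the log equivalence $(B,f_*M)\to(B,N)$ across the pushout. The paper packages the final step as a pasting of homotopy cocartesian squares (outer and left squares cocartesian, hence the right one is), while you invoke the derived cobase-change functor along $(A,M)\to(R,P)$; these are the same homotopy-invariance mechanism in different clothing.
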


\begin{proof}
  We consider the induced square
  \[\xymatrix@-1pc{ (A,M) \ar[d] \ar[r] &  (B,f_*(M)) \ar[d] \ar[r] &(B,N) \ar[d] \\ (R,P) \ar[r]
    & (S,f_*(P)) \ar[r] & (S,Q).}\] Here the outer square is homotopy
  cocartesian by assumption, and it follows easily that the left hand
  square is homotopy cocartesian. Hence the right hand square is
  homotopy cocartesian. If $(B,f_*(M)) \to (B,N)$ is a weak equivalence
  in the log model structure, this implies that $(S,f_*(P)) \to (S,Q)$
  also has this property.
\end{proof}

The following Theorem shows that the previous notion of log-\'etaleness 
implies the classical one on the truncation. We recall the following 
notation: Given a model category $C$ and objects $A$ and $B$ of $C$, we 
will denote the respective slice model categories by $A/C$, $C/B$ and 
$A/C/B$.

\begin{theorem}
  \label{etaleontruncations}
  If a morphism $(f,f^{\flat}) \colon (A,M)\rightarrow (B,N)$ of log
  simplicial rings is derived log-\'etale and $\pi_0f^{\flat}$ is of
  finite presentation, then the induced morphism 
  $(\pi_{0}f, \pi_{0}f^{\flat} ) \colon (\pi_{0} A, \pi_{0} M)
  \rightarrow (\pi_{0}B, \pi_{0} N)$ is a log-\'etale morphism of discrete log
  rings (in the sense of Definition~\ref{katologetale}).
\end{theorem}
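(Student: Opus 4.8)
The plan is to split Definition~\ref{katologetale} into its two requirements and dispatch them separately. The finite presentation of the underlying ring map is immediate: since $(f,f^\flat)$ is derived log-\'etale, $f$ is homotopically finitely presented, so $\pi_0 f\colon \pi_0 A\to \pi_0 B$ is a finitely presented ring map by \cite{hagII}, and together with the hypothesis on $\pi_0 f^\flat$ this supplies the finiteness clause. The entire content therefore lies in verifying that $(\pi_0 f,\pi_0 f^\flat)$ has the unique lifting property against every strict square zero extension $(g,g^\flat)\colon (R,P)\to (S,Q)$ of discrete integral log rings. The key observation is that the test objects $(R,P)$ and $(S,Q)$ are $0$-truncated, which lets me trade the discrete lifting problem for a derived one governed by $\IL_{(B,N)/(A,M)}$.

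First I would reduce to a lifting problem for $(f,f^\flat)$ itself. Given a commutative square as in Definition~\ref{katologetale}, with corners $(\pi_0 A,\pi_0 M)\to (R,P)$ and $(\pi_0 B,\pi_0 N)\xrightarrow{u}(S,Q)$, precomposition with the truncation maps $(A,M)\to(\pi_0 A,\pi_0 M)$ and $(B,N)\to (\pi_0 B,\pi_0 N)$ produces a commutative square with $(f,f^\flat)$ on the left. Because $(R,P)$ is a discrete, hence $0$-truncated, log ring, the derived mapping space $\Map_{s\mathcal L}((B,N),(R,P))$ is discrete and depends only on $(\pi_0 B,\pi_0 N)$; consequently every lift $(B,N)\to (R,P)$ factors uniquely through $(\pi_0 B,\pi_0 N)$, and conversely. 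Thus it suffices to prove that the space of lifts in the slice $s\mathcal{L}_{(A,M)//(S,Q)}$, namely $\Map_{s\mathcal{L}_{(A,M)//(S,Q)}}((B,N),(R,P))$, is contractible.

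To compute this space I would invoke Theorem~\ref{thm:strict-sq-zero-log-derivation}: since $(g,g^\flat)$ is a strict square zero extension of discrete integral log rings with ideal $J=\ker g$, it is, up to isomorphism over $(S,Q)$, the projection from the homotopy pullback of $(S,Q)\xrightarrow{(d,d^\flat)}(S\oplus J[1],Q\oplus J[1])\xleftarrow{\underline 0}(S,Q)$, and by Remark~\ref{+preciso} this pullback may be formed in the slice $s\mathcal{L}_{(A,M)//(S,Q)}$. The derived functor $\Map_{s\mathcal{L}_{(A,M)//(S,Q)}}((B,N),-)$ preserves homotopy pullbacks, so it turns this into a homotopy pullback of spaces. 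The two outer corners are $\Map((B,N),(S,Q))$, which is contractible because $(S,Q)$ is the terminal object of the slice; the base is
\[
\Map_{s\mathcal{L}_{(A,M)//(S,Q)}}((B,N),(S\oplus J[1],Q\oplus J[1]))\simeq \Der_{(A,M)}((B,N),u^*J[1])\simeq \Map_{\Mod_B}(\IL_{(B,N)/(A,M)},u^*J[1]),
\]
where $u^*J$ denotes $J$ regarded as a $B$-module along $u$, the first equivalence is the base-change identification of maps into a trivial square zero extension over $(S,Q)$ with log-derivations (Definition~\ref{def:derivations}, using Lemma~\ref{lem:PLDerEqLDer} to pass between $s\mathcal P$ and $s\mathcal L$), and the second is the representability of $\IL_{(B,N)/(A,M)}$. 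As $(f,f^\flat)$ is derived log-\'etale, $\IL_{(B,N)/(A,M)}\simeq 0$, so the base is contractible as well, whence the homotopy pullback is contractible.

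Contractibility of $\Map_{s\mathcal{L}_{(A,M)//(S,Q)}}((B,N),(R,P))$ gives existence and homotopy-uniqueness of a lift $(B,N)\to(R,P)$; since this mapping space is $0$-truncated (again because $(R,P)$ is discrete), homotopic lifts coincide, so the lift is genuinely unique. Factoring it through $(\pi_0 B,\pi_0 N)$ as above yields the required unique $(h,h^\flat)\colon (\pi_0 B,\pi_0 N)\to (R,P)$, establishing formal log-\'etaleness and hence, with the finiteness remarks, the theorem. I expect the main obstacle to be purely bookkeeping: matching the three slice categories involved, tracking the $B$-module structure on $J$ through the restriction along $u$, and justifying the first equivalence in the display; conceptually this is the familiar ``\'etale $\Leftrightarrow$ trivial cotangent complex'' argument, with Theorem~\ref{thm:strict-sq-zero-log-derivation} supplying the square-zero-extensions-as-pullbacks dictionary in the logarithmic setting.
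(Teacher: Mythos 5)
Your proposal is correct and follows essentially the same route as the paper's proof: both reduce the discrete lifting problem to a statement about derived mapping spaces using discreteness of $(R,P)$ and $(S,Q)$, invoke Theorem~\ref{thm:strict-sq-zero-log-derivation} together with Remark~\ref{+preciso} to present the strict square zero extension as a homotopy pullback in the slice over $(S,Q)$, use preservation of homotopy pullbacks by $\Map$, and conclude from $\IL_{(B,N)/(A,M)}\simeq 0$. The only difference is packaging: the paper fixes a $0$-simplex $\underline{\varphi}$ and identifies the homotopy fiber as a path space in the space of derivations, making the obstruction class in $\Ext_{B}^1(\IL_{(B,N)/(A,M)},\varphi^*J)$ explicit, whereas you compute that same fiber as a homotopy pullback with contractible corners, your ``base-change identification'' of the base with $\Map_{\Mod_B}(\IL_{(B,N)/(A,M)},\varphi^*J[1])$ being exactly the content of the paper's map $\rho_{\underline{\varphi}}$.
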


\begin{proof} First of all observe that the left Quillen functor
  $\pi_{0}: s\mathcal{P} \rightarrow \mathcal{P}$ preserves finitely
  presented objects. So we are left to prove that $(\pi_{0}f,
  \pi_{0}f^{\flat} )$ is formally \'etale. Let $(\pi,\pi^\flat)\colon (R,P)
  \rightarrow (S, Q)$ be a strict square zero extension of discrete integral
  log rings under $(A,M)$, with square zero ideal $J$.

We have to prove that the canonical map \[\Hom_{(\pi_0 A,\pi_0 M )/\mathcal{P}}((\pi_0B,\pi_0N), (R,P))\longrightarrow \Hom_{(\pi_0 A,\pi_0 M )/\mathcal{P}}((\pi_0B,\pi_0N), (S,Q))\] is bijective. Since $(R,P)$ and $(S,Q)$ are discrete, this is equivalent to showing that the canonical map \[u\colon  \Map_{(A,M)/s\mathcal L}((B,N), (R,P))\longrightarrow \Map_{(A,M)/s\mathcal L}((B,N), (S,Q))\] is a weak equivalence of simplicial sets. This is true if and only if for any $0$-simplex  $\underline{\varphi} = (\varphi,\varphi^\flat)$ in $\Map_{(A,M)/s\mathcal L}((B,N), (S,Q))$, the homotopy fiber \[\mathrm{hofib}(\underline{\varphi}) = \mathrm{hofib}(u; \underline{\varphi})\] is non-empty and contractible. In order to establish this, let us write $B_{\underline{\varphi}}$ for $(B,N)$ viewed as an object in $(A,M)/s\mathcal{L}/ (S,Q)$ via $\underline{\varphi}$, and let $\varphi^* J[1]$ be $J[1]$ viewed as a $B$-module via $\varphi$. We consider the map \[\rho_{\underline{\varphi}}\colon  \Map_{(A,M)/s\mathcal L/(S,Q)} (B_{\underline{\varphi}}, (S,Q) \oplus J[1]) \longrightarrow \Map_{(A,M)/s\mathcal L/(S,Q)} ((B,N), (B,N)\oplus \varphi^*J[1])\] induced by sending \[(D, D^{\flat})\colon B_{\underline{\varphi}}\rightarrow (S,Q)\oplus J[1]\] to the map $(B,N)\rightarrow (B,N)\oplus \varphi^{*}J[1]$ whose projection to $(B,N)$ is the identity, and whose projection to $\varphi^*J[1]$ is given, on $B$ and $N$, respectively, by the composites \[\xymatrix{B \ar[r]^-{D} &S\oplus J[1] \ar[r]^-{\mathrm{pr}_{J[1]}} & J[1]} \qquad\text{and}\qquad\xymatrix{N  \ar[r]^-{D^{\flat}} & Q\oplus J[1] \ar[r]^-{\mathrm{pr}_{J[1]}} & J[1]}.\]
In the notation of Theorem~\ref{thm:strict-sq-zero-log-derivation}, we obtain a diagram    \[\xymatrix{\mathrm{hofib}(\underline{\varphi}) \ar[d] \ar[r] & \bullet \ar[d]^-{\underline{\varphi}} \\
\Map_{(A,M)/s\mathcal L/(S,Q)}(B_{\underline{\varphi}}, (R,P)) \ar[d] \ar[r]^-{p_{d}} & \Map_{(A,M)/s\mathcal L/(S,Q)}(B_{\underline{\varphi}}, (S,Q)) \ar[d]^-{(d,d^\flat)} \\
\Map_{(A,M)/s\mathcal L/(S,Q)}(B_{\underline{\varphi}}, (S,Q)) \ar[d] \ar[r]^-{\underline 0} & \Map_{(A,M)/s\mathcal L/(S,Q)}(B_{\underline{\varphi}}, (S,Q)\oplus J[1]) \ar[d]^-{\rho_{\underline{\varphi}}} \\
\bullet \ar[r]_-{\underline 0} &  \Map_{(A,M)/s\mathcal L/(B,N)} ((B,N), (B,N)\oplus \varphi^*J[1]). }\]
The top square is homotopy cartesian by definition. The middle one is homotopy cartesian by Theorem~\ref{thm:strict-sq-zero-log-derivation}, the fact that $\Map$ preserves homotopy cartesian squares, and the fact that the forgetful functor $(A,M)/s\mathcal{L}/ (S,Q) \to (A,M)/s\mathcal{L}$ creates homotopy pullbacks, so that the homotopy pullback of Theorem~\ref{thm:strict-sq-zero-log-derivation} is actually a homotopy pullback in $(A,M)/s\mathcal{L}/(S,Q)$ (see Remark~\ref{+preciso}). The bottom square is homotopy cartesian by definition of the map $\rho_{\underline{\varphi}}$.
If we set $D_{\underline{\varphi}}= \rho_{\underline{\varphi}} \circ (d,d^\flat) \circ \underline{\varphi}$, we see that $\mathrm{hofib}(\underline{\varphi})$ is non-empty if and only if $D_{\underline{\varphi}}$ is zero in \[\pi_{0} \, \Map_{(A,M)/s\mathcal L/(B,N)}((B,N), (B,N)\oplus \varphi^*J[1]) \cong \Ext_{B}^1( \mathbb{L}_{(B,N)/(A,M)}, \varphi^* J),\] and in this case \[\begin{split}\mathrm{hofib}(\underline{\varphi})&\cong \Omega_{0}\Map_{(A,M)/s\mathcal L/(B,N)} ((B,N), (B,N)\oplus \varphi^* J[1])\\ &\cong \Omega_{0} \Map_{B-\Mod}(\mathbb{L}_{(B,N)/(A,M)},\varphi^*J[1]).
\end{split}
\] If $\mathbb{L}_{(B,N)/(A,M)}\cong 0$, $\mathrm{hofib}(\underline{\varphi})$ is then non-empty and contractible.
\end{proof}

\section{Derived log-smooth maps}

Following \cite{ka}, we give the following

\begin{definition}\label{katologsmooth}
A morphism $(f,f^{\beta}) \colon (A,M) \rightarrow (B,N)$ of 
    discrete log rings, will be called \emph{formally 
      log-smooth}  if for any strict square zero extension of discrete integral
    log rings $(g,g^{\flat}) \colon (R,P) \rightarrow (S,Q)$, the 
    canonical map \[\Hom_{\mathcal{L}_{(A,M)}}((B,N), 
    (R,P))\longrightarrow \Hom_{\mathcal{L}_{(A,M)}}((B,N), (S,Q)\] is 
  surjective. Here $\mathcal{L}_{(A,M)}$ denotes the slice category of 
  $\mathcal{L}$ over $(A,M)$.

A morphism $(f,f^{\flat})\colon (A,M)\rightarrow (B,N)$ of 
    discrete log rings, will be called \emph{log-smooth}  if 
    it is formally log-smooth, and the underlying map $f\colon A 
    \rightarrow B$ is finitely presented as a map of commutative 
    algebras.
\end{definition}

\begin{remark}
  \label{katoscompsmooth}
  Remark~\ref{katoscomp} applies analogously to log smooth maps: We
  define smoothness on a larger class of maps than Kato does, and if a
  map of fine log rings is smooth in the sense of
  Definition~\ref{katologsmooth}, then it is smooth in Kato's sense.
\end{remark}

\begin{definition}
A morphism $(f, f^{\flat})\colon (A,M)\rightarrow (B,N)$ of log
    simplicial  rings will be called \emph{derived formally log-smooth}  
    if \[\Hom_{\mathrm{Ho(B-\textrm{Mod})}}(\mathbb{L}_{(B,N)/(A,M)}, J) 
    \simeq 0\] for any simplicial $B$-module $J$ with $\pi_{0} \, J =0$.  

A morphism $(f, f^{\flat}) \colon (A,M)\rightarrow (B,N)$ of log
    simplicial rings will be called \emph{derived log-smooth}  if it is 
    formally log-smooth and $f \colon A \to B$
    is homotopically finitely presented.
\end{definition}

The following Theorem shows that the notion of derived log-smoothness 
implies the classical one on the truncations.

\begin{theorem}
  If $(f,f^{\flat}) \colon (A,M)\rightarrow (B,N)$ of log simplicial
  rings is derived log-smooth, then the induced morphism  $(\pi_{0}f,\pi_0f^\flat) \colon (\pi_{0}A, \pi_{0}M)
  \rightarrow (\pi_{0}B, \pi_{0}N)$ is a log-smooth morphism of discrete log rings (in the sense of Definition \ref{katologsmooth}).
\end{theorem}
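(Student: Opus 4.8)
The plan is to mirror the proof of Theorem~\ref{etaleontruncations}, retaining only the portion of that argument that establishes \emph{non-emptiness} of the relevant homotopy fiber: log-smoothness asks for surjectivity, rather than bijectivity, of the lifting map, so we need only the obstruction-theoretic half of the étale argument. First I would dispose of the finiteness condition exactly as before. The left Quillen functor $\pi_0\colon s\mathcal P \to \mathcal P$ preserves finitely presented objects, so if $f\colon A \to B$ is homotopically finitely presented then $\pi_0 f$ is finitely presented as a map of commutative rings. It then remains to show that $(\pi_0 f,\pi_0 f^\flat)$ is formally log-smooth, i.e., that for every strict square zero extension $(\pi,\pi^\flat)\colon (R,P)\to (S,Q)$ of discrete integral log rings under $(\pi_0 A,\pi_0 M)$, with square zero ideal $J$, the map
\[
\Hom_{\mathcal{L}_{(\pi_0 A,\pi_0 M)}}((\pi_0 B,\pi_0 N),(R,P)) \longrightarrow \Hom_{\mathcal{L}_{(\pi_0 A,\pi_0 M)}}((\pi_0 B,\pi_0 N),(S,Q))
\]
is surjective. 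Since $(R,P)$ and $(S,Q)$ are discrete, the adjunction between $\pi_0$ and the inclusion of discrete log rings identifies these $\Hom$-sets with the path components of the derived mapping spaces $\Map_{(A,M)/s\mathcal L}((B,N),(R,P))$ and $\Map_{(A,M)/s\mathcal L}((B,N),(S,Q))$, precisely as in Theorem~\ref{etaleontruncations}. Thus it suffices to prove that $\pi_0(u)$ is surjective, where $u$ denotes the induced map of derived mapping spaces.

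Next I would invoke verbatim the homotopy-cartesian diagram constructed in the proof of Theorem~\ref{etaleontruncations}, which, for a fixed $0$-simplex $\underline\varphi=(\varphi,\varphi^\flat)$ of the target, expresses the homotopy fiber $\mathrm{hofib}(\underline\varphi)$ of $u$ in terms of the strict square zero extension supplied by Theorem~\ref{thm:strict-sq-zero-log-derivation}. Surjectivity of $\pi_0(u)$ is equivalent to the non-emptiness of $\mathrm{hofib}(\underline\varphi)$ for every such $\underline\varphi$, and that diagram shows that $\mathrm{hofib}(\underline\varphi)$ is non-empty exactly when the obstruction class $D_{\underline\varphi}$ vanishes in
\[
\pi_0\,\Map_{(A,M)/s\mathcal L/(B,N)}((B,N),(B,N)\oplus\varphi^*J[1]) \cong \Ext_{B}^1(\mathbb{L}_{(B,N)/(A,M)},\varphi^*J).
\]

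Finally, the obstruction is killed by formal log-smoothness. Indeed, $\Ext_{B}^1(\mathbb{L}_{(B,N)/(A,M)},\varphi^*J) \cong \Hom_{\Ho(\Mod_B)}(\mathbb{L}_{(B,N)/(A,M)},\varphi^*J[1])$, and the module $\varphi^*J[1]$ satisfies $\pi_0(\varphi^*J[1]) = 0$. By the definition of derived formally log-smooth applied to the module $\varphi^*J[1]$, this group vanishes, so $D_{\underline\varphi}=0$ and $\mathrm{hofib}(\underline\varphi)$ is non-empty. This gives surjectivity of $\pi_0(u)$, hence of the classical lifting map, completing the argument.

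The chief difference from the étale case—and the one point where care is needed—is that one must deliberately stop at non-emptiness: for a merely smooth map there is no reason for $\Omega_0\Map_{\Mod_B}(\mathbb{L}_{(B,N)/(A,M)},\varphi^*J[1])$ to be contractible, so one should \emph{not} attempt to prove that the fiber is contractible or that $u$ is a weak equivalence. Beyond transcribing the étale diagram, the only genuine verification is the identification of $D_{\underline\varphi}$ as an element of $\Ext_{B}^1(\mathbb{L}_{(B,N)/(A,M)},\varphi^*J)$ together with the observation that $\pi_0(\varphi^*J[1])=0$; the latter is exactly the input that the definition of derived formal log-smoothness is designed to exploit, which makes this the main (and essentially only) substantive step.
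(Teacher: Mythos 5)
Your proposal is correct and takes essentially the same approach as the paper: the paper's proof likewise invokes the diagram and obstruction class $D_{\underline{\varphi}}$ from the proof of Theorem~\ref{etaleontruncations}, identifies surjectivity of the lifting map with vanishing of $D_{\underline{\varphi}}$ in $\Hom_{\mathrm{Ho}(\Mod_B)}(\mathbb{L}_{(B,N)/(A,M)},\varphi^*J[1])$, and concludes from $\pi_0(\varphi^*J[1])=0$ and the definition of derived formal log-smoothness. Your explicit points --- stopping at non-emptiness of the homotopy fiber rather than contractibility, and $\pi_0$ preserving finite presentation --- are exactly what the paper leaves implicit in its appeal to the \'etale argument.
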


\begin{proof} Let $(\pi,\pi^\flat)\colon (R,P) \rightarrow (S, Q)$ be a 
  strict square zero extension of discrete integral log rings under $(A,M)$, with square zero ideal $J$. Following the arguments and the notation in the proof of Theorem~\ref{etaleontruncations} shows that the map 
 \[\Hom_{(\pi_0 A,\pi_0 M )/\mathcal{L}}((\pi_0B,\pi_0N), (R,P))\longrightarrow \Hom_{(\pi_0 A,\pi_0 M )/\mathcal{L}}((\pi_0B,\pi_0N), (S,Q))\]
is surjective if and only if for any $0$-simplex  
  $\underline{\varphi}$ in  $\Map_{(A,M)/s\mathcal L}((B,N), (S,Q))$, the associated element  $D_{\underline{\varphi}}$ is zero in 
   \[\begin{split}&\pi_{0} \, \Map_{(A,M)/s\mathcal L/(B,N)}((B,N), (B,N)\oplus \varphi^*J[1])\\ \cong \,&\Ext_{B}^1( \mathbb{L}_{(B,N)/(A,M)}, \varphi^* J) 
     \cong \Hom_{\mathrm{Ho(B-\Mod)}}(\mathbb{L}_{(B,N)/(A,M)},
     \varphi^*J[1]).
\end{split}
\]
 But $\pi_{0}(\varphi^*J[1])=0$, so the result follows.
\end{proof}

\section{Derived log stacks}
In giving our definitions, we will not mention explicitly the proper choices of universes: the reader will find they are the same as in \cite{hagI}.

\subsection{Derived log prestacks}

Throughout we fix a base commutative ring $k$. If we view $k$ as a constant
simplicial ring with the trivial simplicial pre-log structure, then
the category of pre-log simplicial $k$-algebras is the category
pre-log simplicial rings under $k$. It is denoted by $s\mathcal P_k$
and inherits an injective and a projective model structure from
$s\mathcal P$. Likewise, we obtain a model category of log simplicial
rings $s\mathcal{L}_{k}$ from Theorem~\ref{thm:log-model-str} as the
comma category $k\downarrow s\mathcal L$.
\begin{definition} 
The category of \emph{derived log affines over $k$} is the opposite category $\dLogAff_{k}$ of $s\mathcal{L}_{k}$, 
and we let 
\[ \mathrm{SPr}(\dLogAff_{k}) := \mathcal S^{\textrm{dLogAff}_{k}^{\textrm{op}}} = \mathcal S^{s\mathcal{L}_{k}}
\] be the category of simplicial presheaves on derived log affines over $k$.
\end{definition}

Note that $\dLogAff_{k}$ is a simplicial model category, and that
 $\mathrm{SPr}(\dLogAff_{k})$ is simplicially enriched by 
 \[\Hom_{\mathrm{SPr}(\textrm{dLogAff}_{k})}(F,G)_{n}:= 
 \Hom_{\mathrm{SPr}(\textrm{dLogAff}_{k})}(F\times \Delta^{n},G)\] where 
 \[(F\times \Delta^{n}) (A,M):= F(A,M)\times \Delta^{n}.\]

\begin{proposition}
  The category $\mathrm{SPr}(\dLogAff_{k})$ admits a left
  proper cellular model structure where the weak
  equivalences and the fibrations are defined
  object-wise. 
\end{proposition}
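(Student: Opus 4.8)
The plan is to realize the desired model structure as the objectwise (also called ``projective'' or ``levelwise'') model structure on the diagram category of simplicial presheaves $\mathcal S^{s\mathcal L_k}$, and then invoke a general existence theorem for such structures. Recall that $\mathcal S$ carries the standard Kan--Quillen model structure, which is cofibrantly generated, left proper, and cellular. The category $\mathrm{SPr}(\dLogAff_k) = \mathcal S^{s\mathcal L_k}$ is the category of functors from the (small, since we are fixing universes as in~\cite{hagI}) category $s\mathcal L_k$ into $\mathcal S$. So the first step is to note that the desired weak equivalences and fibrations are exactly those maps $F \to G$ for which each component $F(A,M) \to G(A,M)$ is a weak equivalence, respectively a fibration, of simplicial sets.

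First I would invoke the standard theorem that for a cofibrantly generated model category $\mathcal M$ and any small index category $\mathcal D$, the diagram category $\mathcal M^{\mathcal D}$ admits the projective model structure in which weak equivalences and fibrations are defined objectwise; see for instance~\cite[Theorem 11.6.1]{Hirschhorn_model}. Applying this with $\mathcal M = \mathcal S$ and $\mathcal D = s\mathcal L_k$ gives the model structure together with its cofibrant generation. The generating (acyclic) cofibrations are obtained by tensoring the generating (acyclic) cofibrations of $\mathcal S$ with the representable functors $\mathrm{Hom}_{s\mathcal L_k}((A,M),-)$.

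Next I would verify the three additional adjectives. \emph{Left properness} is inherited objectwise: in the projective structure a cofibration is in particular an objectwise cofibration, so the pushout of an objectwise weak equivalence along an objectwise cofibration is computed objectwise, and left properness of $\mathcal S$ finishes the argument; this is~\cite[Theorem 13.1.14]{Hirschhorn_model}. \emph{Cellularity} follows from~\cite[Theorem 13.1.14]{Hirschhorn_model} (or the analogous diagram-category statement therein) applied to the cellular model category $\mathcal S$, since the smallness, compactness, and effective-monomorphism conditions pass from $\mathcal S$ to $\mathcal S^{\mathcal D}$ when the generating sets are built by tensoring with representables.

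I do not expect a genuine obstacle here, since the statement is an instance of a well-documented general construction; the only point requiring a little care is the set-theoretic one, namely that $s\mathcal L_k$ is essentially small relative to the chosen universes so that the diagram category and the small-object argument behave well. This is exactly the role of the universe conventions of~\cite{hagI} flagged at the start of the section, and once those are in force the existence, left properness, simplicial enrichment (already recorded before the statement), and cellularity of the objectwise structure are all formal consequences of the corresponding properties of $\mathcal S$.
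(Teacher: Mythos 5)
Your proposal is correct and takes essentially the same approach as the paper: the paper's proof is simply a citation of \cite[Propositions A.1.3(1) and A.2.5]{hagI}, which are exactly the general existence, left properness, and cellularity statements for the objectwise (projective) model structure on diagram categories of simplicial sets that you instead obtain from \cite{Hirschhorn_model}. The only quibble is a citation detail: cellularity of diagram categories is \cite[Proposition 12.1.5]{Hirschhorn_model} rather than Theorem 13.1.14, but your hedge covers this and the mathematics is unaffected.
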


\begin{proof} This is \cite[Propositions A.1.3(1) and A.2.5]{hagI}.
\end{proof}

Consider the Yoneda functor $$\dLogAff_{k} \longrightarrow \mathrm{SPr}(\textrm{dLogAff}_{k}),\,\,\,\, X\longmapsto h_{X}:= \Hom_{\textrm{dLogAff}_{k}}(-, X),$$ and define $$h_{W}:= \{ h_{w}:h_X \rightarrow h_Y \,|\, w:X\rightarrow Y \, \textrm{a weak equivalence in \,} \textrm{dLogAff}_{k} \}.$$

\begin{definition} The category of \emph{log prestacks over $k$} is the model category $\dLogAff^{\wedge}_{k}$ obtained as the left Bousfield localization of  $\mathrm{SPr}(\textrm{dLogAff}_{k})$ with respect to $h_{W}$.
\end{definition}

\begin{remark}
In the notations of \cite[Definitions 2.3.3 and 4.1.4]{hagI}, the model category $\dLogAff^{\wedge}_{k}$ (with the appropriate choice of universes) is denoted as $(\textrm{dLogAff}_{k}, S)^{\wedge}$, where $S$ stands for the weak equivalences in $\textrm{dLogAff}_{k}$.
\end{remark}

Note that, by standard properties of left Bousfield localizations (see e.g.~\cite{Hirschhorn_model}), $\mathrm{Ho}(\dLogAff^{\wedge}_{k})$ can be identified with the full subcategory of $\mathrm{Ho}(\mathrm{SPr}(\textrm{dLogAff}_{k}))$ consisting of functors $F\colon \textrm{dLogAff}_{k}^{\textrm{op}} \longrightarrow \mathcal{S}$ preserving weak equivalences.

We are now able to define a derived log analog of the spectrum functor.
 
\begin{definition} \label{Spec}
We define the \emph{derived log spectrum} functor $\textrm{Spec}$ as 
follows $$\textrm{Spec}\colon \textrm{Ho}(\dLogAff_{k}) \longrightarrow 
\textrm{Ho}(\textrm{dLogAff}^{\wedge}_{k})\, ,\,\,\,\, (A,M) \longmapsto 
\bfHom_{s\mathcal{P}}(Q(A,M), R(-))$$ where $Q(-)$ (respectively, 
$R(-)$) denotes a cofibrant (resp., fibrant) replacement functor in the 
model category $s\mathcal{P}^{\textrm{proj}}$, and 
$\bfHom_{s\mathcal{P}}(-,-)$ the simplicial enrichment in 
$s\mathcal{P}$.
\end{definition}

Equivalently, we could have defined $\textrm{Spec}$ as in 
\cite[Definition 4.2.5]{hagI}. The model category version of the Yoneda 
lemma (\cite[Corollary 4.2.4]{hagI}), tells us that

\begin{proposition}
The $\textrm{Spec}$ functor is fully faithful, and for any $(A,M)\in 
s\mathcal{P}$, and any $F \in \dLogAff^{\wedge}_{k}$, we have a 
canonical isomorphism in $\textrm{Ho}(\mathcal{S})$, 
$$\Map_{\textrm{dLogAff}^{\wedge}_{k}}(\mathrm{Spec} (A,M), F) \simeq 
F(A,M).$$
\end{proposition}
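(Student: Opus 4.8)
The plan is to recognize the whole construction---the simplicial presheaf category $\mathrm{SPr}(\dLogAff_k)$, its left Bousfield localization $\dLogAff_k^{\wedge}$ at the representable weak equivalences $h_W$, and the functor $\mathrm{Spec}$---as an instance of the model-categorical Yoneda theory developed in \cite{hagI}, applied to the model category $s\mathcal{L}_k$ with $\dLogAff_k = (s\mathcal{L}_k)^{\mathrm{op}}$. Both assertions are then formal consequences, so the real work is the bookkeeping that matches our explicit objects to the abstract ones.

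First I would check that the explicitly defined functor $\mathrm{Spec}(A,M) = \bfHom_{s\mathcal P}(Q(A,M), R(-))$ agrees, up to objectwise weak equivalence, with the derived Yoneda functor $\mathbb{R}\underline{h}$ of \cite[Definition 4.2.5]{hagI}; this is the content of the preceding remark. Concretely, one must see that $\bfHom_{s\mathcal P}(Q(A,M), R(B,P))$ computes the derived mapping space $\Map_{s\mathcal L_k}((A,M),(B,P))$ for every $(B,P)$. Here $Q(A,M)$ is projectively cofibrant and $R(B,P)$ is fibrant, and derived mapping spaces into a log-fibrant---hence $S$-local---target are unaffected by passing between the pre-log and log model structures, by the standard properties of left Bousfield localizations. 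In particular $\mathrm{Spec}(A,M)$ preserves weak equivalences and so lands in the local subcategory cut out by $h_W$, i.e.\ in $\dLogAff_k^{\wedge}$.

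Granting this identification, the mapping-space formula $\Map_{\dLogAff_k^{\wedge}}(\mathrm{Spec}(A,M), F) \simeq F(A,M)$ is exactly the model Yoneda lemma \cite[Corollary 4.2.4]{hagI}, which I would simply invoke. Full faithfulness then follows by specializing $F = \mathrm{Spec}(B,P)$: the formula yields
\[
\Map_{\dLogAff_k^{\wedge}}(\mathrm{Spec}(A,M), \mathrm{Spec}(B,P)) \simeq \mathrm{Spec}(B,P)(A,M) = \bfHom_{s\mathcal P}(Q(B,P), R(A,M)),
\]
and by the previous step the right-hand side is the derived mapping space of $\dLogAff_k$ from $(A,M)$ to $(B,P)$. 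Passing to $\pi_0$ shows that $\mathrm{Spec}$ is fully faithful on homotopy categories.

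The only genuinely non-formal point---and where I would focus attention---is that first identification: confirming that the hands-on recipe built from projective cofibrant and fibrant replacements in $s\mathcal P^{\mathrm{proj}}$ reproduces the homotopy-coherent data of $\mathbb{R}\underline{h}$, with no discrepancy introduced by computing $\mathrm{Spec}$ through the pre-log rather than the log model structure. Everything downstream of that is a citation to \cite{hagI}.
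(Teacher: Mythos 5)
Your overall strategy coincides with the paper's: its entire proof is the remark that $\mathrm{Spec}$ agrees with the derived Yoneda functor of \cite[Definition 4.2.5]{hagI} applied to $\dLogAff_k=(s\mathcal{L}_k)^{\mathrm{op}}$, followed by a citation of the model-categorical Yoneda lemma \cite[Corollary 4.2.4]{hagI}. But the step you yourself single out as the crux is where your argument breaks. A fibrant replacement $R(B,P)$ in $s\mathcal{P}^{\mathrm{proj}}$ is \emph{not} log fibrant: projective fibrancy only asks that the underlying simplicial ring and simplicial monoid be Kan (Proposition~\ref{prop:proj-pre-log-model-str}), whereas log fibrancy means injective pre-log fibrancy \emph{plus} the log condition (Lemma~\ref{lem:log-as-local-objects}). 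Consequently the objectwise identification $\bfHom_{s\mathcal{P}}(Q(A,M),R(B,P))\simeq \Map_{s\mathcal{L}_k}((A,M),(B,P))$ that you assert for \emph{every} $(B,P)$ is false. Take $(A,M)=(\mathbb{Z}[\mathbb{N}],\mathbb{N})$, the free pre-log ring on one generator, and $(B,\{1\})$ with trivial pre-log structure, $B$ discrete: by the free--forgetful adjunction the left-hand side is the underlying space of $\{1\}$, a point, while the right-hand side is computed by mapping into the $S$-local replacement of $(B,\{1\})$, which by Lemma~\ref{lem:logification-local-equivalence} is its logification $(B,G)$ with $G\simeq(B,\cdot)^{\times}$; so the right-hand side is the space of units, which is not contractible already for $B=\mathbb{Z}$. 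The same example shows that $\mathrm{Spec}(A,M)$ does \emph{not} carry log equivalences (e.g.\ logification maps) to weak equivalences, hence is not a local object of $\dLogAff^{\wedge}_{k}$, contrary to your claim; in particular you are not entitled to substitute $F=\mathrm{Spec}(B,P)$ directly into the Yoneda formula, which requires $F$ local.

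The proposition is nevertheless true, and the repair is short; it is presumably what the paper's terse citation intends. Since weak equivalences of $s\mathcal{P}^{\mathrm{inj}}$ are in particular log equivalences, every local object $F$ of $\dLogAff^{\wedge}_{k}$ also inverts pre-log equivalences, i.e.\ is local for the analogous localization built from the \emph{pre-log} model structure; and $\mathrm{Spec}(A,M)$ is, on the nose, the derived Yoneda functor of that pre-log structure. Hence \cite[Corollary 4.2.4]{hagI}, applied to the pre-log structure, already gives $\Map_{\dLogAff^{\wedge}_{k}}(\mathrm{Spec}(A,M),F)\simeq F(A,M)$ for every local $F$ (this is also why the statement can be made for arbitrary $(A,M)\in s\mathcal{P}$). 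For full faithfulness, one uses this formula, tested against local objects, to see that the natural map $\mathrm{Spec}(B,P)=\Map_{s\mathcal{P}}((B,P),-)\to\Map_{s\mathcal{L}_k}((B,P),-)$ is an equivalence in $\mathrm{Ho}(\dLogAff^{\wedge}_{k})$ onto a local object, and then evaluates the Yoneda formula at this localization rather than at $\mathrm{Spec}(B,P)$ itself, obtaining $\Map_{\dLogAff^{\wedge}_{k}}(\mathrm{Spec}(A,M),\mathrm{Spec}(B,P))\simeq\Map_{s\mathcal{L}_k}((B,P),(A,M))$; applying $\pi_0$ gives full faithfulness on homotopy categories. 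So: correct skeleton, matching the paper's, but the objectwise identification of pre-log and log mapping spaces on which you rest the crucial step fails, and the comparison of $\mathrm{Spec}$ with the derived Yoneda embedding of $\dLogAff_k$ must be made up to local equivalence in $\dLogAff^{\wedge}_{k}$, not objectwise.
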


\subsection{Derived log stacks}

\begin{definition} A family $\{(A,M) \rightarrow (A_i , M_{i}) \}_{i\in 
    \textrm{I}}$ of morphisms in $s\mathcal{P}_{k}$ is called a 
  \emph{strict log-\'etale covering family} of $(A,M)$ in $\dLogAff_{k}$ 
  if \begin{itemize}
\item each $(A,M) \longrightarrow (A_i , M_{i})$ is a strict log-\'etale 
  morphism (of simplicial pre-log $k$-algebras), and \item there exists 
  a finite subset $\textrm{J} \subseteq \textrm{I}$ such that the family 
  of base-change functors $\{ -\otimes_{A}^{\mathbb{L}} A_{j}\colon 
    \mathrm{Ho}(s\textrm{Mod}_{A}) \longrightarrow 
    \mathrm{Ho}(s\textrm{Mod}_{A_{j}})\}_{j\in \textrm{J}}$ is 
  conservative.
\end{itemize}
\end{definition}

\begin{proposition}
The collection of strict log-\'etale covering families form a model 
pre-topology on the model category $\dLogAff_{k}$ in the sense 
of \cite[Definition 4.3.1]{hagI}.
\end{proposition}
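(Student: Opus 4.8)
The plan is to check the three conditions that define a model pre-topology in the sense of \cite[Definition 4.3.1]{hagI}: that equivalences are covering families, that covering families are stable under composition, and that they are stable under homotopy base change. In each case a strict log-\'etale covering family encodes two separate requirements — that every member be a strict log-\'etale morphism, and that some \emph{finite} subfamily induce a conservative collection of derived base-change functors on homotopy module categories — so I would verify the two requirements separately. The first is governed entirely by the results of the preceding two sections, while the second is a module-theoretic bookkeeping argument built from flat base change and the behaviour of conservative families.

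First I would treat equivalences and composition. If $(A,M)\to(A',M')$ is a weak equivalence, then it is strict (logification preserves weak equivalences and $(A',M')$ is log, so the inverse image log structure is weakly equivalent to $M'$), it is derived log-\'etale (its cotangent complex is trivial and equivalences are homotopically finitely presented), and $-\otimes^{\mathbb{L}}_{A}A'$ is an equivalence of homotopy module categories, hence conservative; thus the one-element family is a covering. For composition, given a covering $\{(A,M)\to(A_i,M_i)\}_{i\in I}$ together with coverings $\{(A_i,M_i)\to(A_{ij},M_{ij})\}_{j\in I_i}$, each composite is strict log-\'etale, since strictness visibly composes and derived log-\'etaleness composes by Proposition~\ref{basicpropertiesetale}. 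For the conservativity clause I would take finite $J\subseteq I$ witnessing the first covering and finite $J_j\subseteq I_j$ witnessing the $j$-th covering for $j\in J$; the finite index set $\{(j,k):j\in J,\ k\in J_j\}$ then works, because $-\otimes^{\mathbb{L}}_{A}A_{jk}\simeq(-\otimes^{\mathbb{L}}_{A}A_j)\otimes^{\mathbb{L}}_{A_j}A_{jk}$ and a composite of conservative families is conservative.

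For stability under homotopy base change, let $\{(A,M)\to(A_i,M_i)\}_{i\in I}$ be a strict log-\'etale covering and $(A,M)\to(C,O)$ an arbitrary morphism. Each base-changed map $(C,O)\to(A_i,M_i)\coprod^{h}_{(A,M)}(C,O)$ is derived log-\'etale by Proposition~\ref{basicpropertiesetale} and strict by Lemma~\ref{lem:StrictBaseChange}, hence strict log-\'etale. I would keep the same finite subset $J\subseteq I$: since the forgetful functor to $s\mathcal R$ preserves homotopy pushouts, the underlying ring of $(A_j,M_j)\coprod^{h}_{(A,M)}(C,O)$ is $A_j\otimes^{\mathbb{L}}_{A}C$, and associativity of the derived tensor product (flat base change, cf.~Proposition~\ref{basictriangles}) yields a natural equivalence $P\otimes^{\mathbb{L}}_{C}(A_j\otimes^{\mathbb{L}}_{A}C)\simeq P\otimes^{\mathbb{L}}_{A}A_j$ for every $C$-module $P$ regarded over $A$. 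As a $C$-module vanishes precisely when it vanishes over $A$, conservativity of $\{-\otimes^{\mathbb{L}}_{A}A_j\}_{j\in J}$ transfers to conservativity of the base-changed family.

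The main obstacle is the conservativity clause rather than the log-\'etale clause: the strict and log-\'etale parts are handed to us directly by Proposition~\ref{basicpropertiesetale} and Lemma~\ref{lem:StrictBaseChange}, whereas one must check that \emph{finiteness} together with conservativity genuinely propagates, and in particular survives homotopy base change. The essential inputs there are the flat base-change equivalence for the derived tensor product and the conservativity of the restriction functor from $C$-modules to $A$-modules; once these are in place, all three axioms of \cite[Definition 4.3.1]{hagI} are verified.
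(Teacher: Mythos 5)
Your proof is correct and follows essentially the same route as the paper's, whose entire argument is a one-line appeal to Proposition~\ref{basicpropertiesetale} and Lemma~\ref{lem:StrictBaseChange}. The only difference is that you additionally spell out what the paper leaves implicit: the equivalence axiom, strictness under composition, and the finiteness/conservativity bookkeeping via flat base change and conservativity of restriction from $C$-modules to $A$-modules.
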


\begin{proof} This follows immediately from stability of strict 
  log-\'etale maps with respect to composition and homotopy pullbacks 
  (Proposition  \ref{basicpropertiesetale} and 
  Lemma~\ref{lem:StrictBaseChange}).
\end{proof}

\begin{definition} We denote by \emph{str-log-\'et} both the model 
  pre-topology, given by strict log-\'etale covering families, on 
  $\dLogAff_{k}$, and the Grothendieck topology on 
  $\textrm{Ho}(\dLogAff_{k})$ generated by the induced 
  pre-topology.
\end{definition}

To any $F\in \mathrm{SPr}(\dLogAff_{k})$, we can associate the 
\emph{sheaf} of connected components $\pi_{0}(F)$ on the strict log 
\'etale (usual) site $(\textrm{Ho}(\dLogAff_{k}), 
\textrm{str-log-\'et})$.  And, for any $i>0$, any fibrant $X\in 
\dLogAff_{k}$, and any $s\in F(X)_{0}$, we can consider the 
\emph{sheaf} $\pi_{i}(F, s)$ on the comma site 
$(\textrm{Ho}(\dLogAff_{k}/X), \textrm{str-log-\'et})$ 
(\cite[Definition  4.5.3.]{hagI}).

\begin{definition} A map $f:F\longrightarrow G$ in $ \mathrm{SPr}(\dLogAff_{k})$ is called a $\pi_{*}$\emph{-isomorphism} if the induced maps of sheaves $$\pi_0 (F) \longrightarrow \pi_0 (G), $$ $$\pi_{i}(F,s) \longrightarrow \pi_{i}(G, f(s))$$ are isomorphisms, for any $i >0$, any fibrant $X$, and any $s\in F(X)_{0}$.
\end{definition}

\begin{theorem} \label{etalelogstacks} There is a model structure on $\mathrm{SPr}(\dLogAff_{k})$ in which the cofibrations are the same as those in $\textrm{dLogAff}^{\wedge}_{k}$, and the weak equivalences are $\pi_{*}$-isomorphisms.
\end{theorem}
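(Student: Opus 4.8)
The plan is to realize this model structure as a left Bousfield localization of the prestack model category $\dLogAff^{\wedge}_{k}$ and to invoke the general machinery of Toën--Vezzosi for model sites. The starting observation is that the preceding proposition equips $\dLogAff_{k}$ with the structure of a model site: the strict log-\'etale covering families form a model pre-topology in the sense of~\cite[Definition 4.3.1]{hagI}. Once this is in place, the construction of the \emph{local} (or \emph{stack}) model structure on simplicial presheaves over a model site, carried out in~\cite[\S 4.6]{hagI}, applies. Concretely, I would define the desired model structure as the left Bousfield localization of $\dLogAff^{\wedge}_{k}$ with respect to a set of maps detecting the topology---either the covering sieves associated to the strict log-\'etale pre-topology or, equivalently, the associated hypercovers---and then appeal to~\cite[Theorem 4.6.1]{hagI} for its existence.

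The key steps, in order, are the following. First I would check that the input required by~\cite[Theorem 4.6.1]{hagI} is available: the prestack model category $\dLogAff^{\wedge}_{k}$ is left proper, simplicial, and cellular (it inherits these from the object-wise model structure recorded above, being itself a left Bousfield localization at $h_{W}$), and str-log-\'et is a model pre-topology, which is exactly the content of the proposition just proved. Second, since left Bousfield localization requires a \emph{set} of maps, I would verify that the relevant covering sieves form a set, using the universe conventions fixed at the start of the section, so that~\cite[Theorem 4.1.1]{Hirschhorn_model} supplies the localized model structure together with the inherited left properness, simpliciality, and cellularity. Since a left Bousfield localization never alters the cofibrations, these automatically coincide with those of $\dLogAff^{\wedge}_{k}$, which settles the first assertion of the theorem for free.

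The substantive point, and the one I expect to be the main obstacle, is identifying the weak equivalences of this localization with the $\pi_{*}$-isomorphisms. The general construction produces \emph{local equivalences} as the new weak equivalences, and one must show these agree with maps inducing isomorphisms on the sheaves $\pi_{0}(F)$ and $\pi_{i}(F,s)$ over the site $(\mathrm{Ho}(\dLogAff_{k}), \text{str-log-\'et})$. This comparison is precisely what is established in~\cite[\S 4.5--4.6]{hagI}: the homotopy sheaves of~\cite[Definition 4.5.3]{hagI} are used to detect local equivalences, the argument resting on the behaviour of sheafification on homotopy sheaves and on the fact that a map is a local equivalence exactly when it becomes an isomorphism after passage to the associated stack. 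I would conclude by transporting this characterization along our definitions of $\pi_{0}(F)$ and $\pi_{i}(F,s)$, which were deliberately set up in the form demanded by~\cite{hagI}. No ingredient beyond the cited results is needed; the entire task is to confirm that our model site $(\dLogAff_{k}, \text{str-log-\'et})$ satisfies their axioms.
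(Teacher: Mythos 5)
Your proposal is correct and matches the paper's proof, which consists of the single line ``This follows from \cite[Theorem 4.6.1]{hagI}'': the definitions of the model pre-topology, the homotopy sheaves $\pi_{0}(F)$ and $\pi_{i}(F,s)$, and the $\pi_{*}$-isomorphisms were all set up precisely so that this citation applies verbatim. Your additional verifications (left properness, cellularity, simpliciality of $\dLogAff^{\wedge}_{k}$, the set-theoretic universe issue, and the identification of local equivalences with $\pi_{*}$-isomorphisms) are exactly the hypotheses and content packaged inside that theorem of To\"en--Vezzosi, so you have simply unfolded the same argument.
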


\begin{proof} This follows from \cite[Theorem 4.6.1]{hagI}.
\end{proof}

\begin{definition} 
The model category structure on $\mathrm{SPr}(\dLogAff_{k})$ given by Theorem \ref{etalelogstacks} will
be called the \emph{model category of derived log stacks}, and its homotopy category will be simply denoted by $\textrm{dLogSt}_{k}$
\end{definition}

It follows from the proof of Theorem \ref{etalelogstacks}, and from basic properties of left Bousfield localizations, that $\textrm{dLogSt}_{k}$ can be identified with the full subcategory of $\mathrm{Ho}(\mathrm{SPr}(\dLogAff_{k}))$ consisting of functors $F\colon \textrm{dLogAff}_{k}^{\textrm{op}} \longrightarrow \mathcal{S}$ such that
$F$ preserves weak equivalences and 
$F$ satisfies \emph{strict log-\'etale hyperdescent}, i.e., the 
  canonical map $$F(X) \longrightarrow  
  \mathrm{holim}_{\Delta^{op}}F(H_{\bullet}):=
  \mathrm{holim}_{\Delta^{op}} 
  \Map_{\dLogAff^{\wedge}_{k}}(H_{\bullet}, F) $$ is an 
  isomorphism in $\mathrm{Ho}(\mathcal{S})$, for any strict log-\'etale 
  pseudo-representable hypercover $H_{\bullet} \rightarrow h_{X}$ of $X$ 
  (see \cite[Definition 4.6.5]{hagI}).

In particular, we will say that an object $F \in 
\textrm{Ho}(\dLogAff^{\wedge}_{k})$ \emph{is} a derived log 
stack, if it satisfies the strict log-\'etale hyperdescent condition.

\begin{proposition}\label{subcanonical}
The strict log-\'etale model pre-topology on the model category 
$\dLogAff_{k}$  is sub-canonical (\cite[Definition 1.3.1.3]{hagII}), 
i.e., $\textrm{Spec} (A,M)$ is a derived log stack, for any $(A,M) 
\in s\mathcal{P}$.
\end{proposition}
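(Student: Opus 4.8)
The plan is to verify directly that $\Spec(A,M)$ satisfies the two conditions characterizing a derived log stack. Since $\Spec(A,M)$ is by construction an object of $\mathrm{Ho}(\dLogAff^{\wedge}_{k})$, it automatically preserves weak equivalences, so the only thing to check is strict log-\'etale hyperdescent. Writing $X=\Spec(B,N)$, a strict log-\'etale pseudo-representable hypercover $H_{\bullet}\to h_{X}$ is, up to the identifications provided by the Yoneda-type proposition above, induced by a cosimplicial diagram of strict log-\'etale $(B,N)$-algebras $(B,N)\to (C_{\bullet},O_{\bullet})$, and the descent condition becomes the assertion that
\[
\Map_{s\mathcal{L}_{k}}\bigl((A,M),(B,N)\bigr)\longrightarrow \mathrm{holim}_{\Delta}\,\Map_{s\mathcal{L}_{k}}\bigl((A,M),(C_{\bullet},O_{\bullet})\bigr)
\]
is a weak equivalence. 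Since a homotopy limit of mapping spaces out of a fixed object is the mapping space into the homotopy limit, it suffices to show that the augmentation exhibits $(B,N)\simeq \mathrm{holim}_{\Delta}(C_{\bullet},O_{\bullet})$ in $s\mathcal{L}_{k}$.

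The key is to split this statement into its ring and monoid components, using that homotopy limits in $s\mathcal{L}_{k}$ are detected on the underlying simplicial ring and simplicial monoid of log-fibrant representatives. On the ring side, a strict log-\'etale map has underlying \'etale map of simplicial commutative rings, so $B\to C_{\bullet}$ is an \'etale hypercover; the sub-canonicity of the \'etale topology on derived affine schemes \cite{hagII} then yields $B\simeq \mathrm{holim}_{\Delta}C_{\bullet}$. On the monoid side, strictness of each $(B,N)\to (C_{i},O_{i})$ means that $O_{i}$ is the logification of the inverse image log structure of $N$ along $B\to C_{i}$, so the log structures on the cover are all obtained from $N$ by base change. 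Via the log condition and Construction~\ref{constr:logification}, descent for $O_{\bullet}$ thus reduces to descent for $N$ together with descent for the invertible path components $C_{\bullet}^{\times}$; the latter is already controlled by the \'etale descent on rings, since passage to invertible path components is detected on $\pi_{0}$. Combining the two components, and invoking Lemma~\ref{lem:StrictBaseChange} to ensure that strictness is stable along the cosimplicial diagram, we conclude $(B,N)\simeq \mathrm{holim}_{\Delta}(C_{\bullet},O_{\bullet})$, which is the desired hyperdescent.

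The main obstacle is the bookkeeping forced by the log model structure: because $s\mathcal{L}$ is a left Bousfield localization, the mapping spaces and the homotopy limit above must be computed with log-fibrant (pre-fibrant log) representatives, and one must check that the levelwise homotopy limit of the $(C_{\bullet},O_{\bullet})$ — formed separately on rings and on monoids — is again a log simplicial ring, and that strictness interacts correctly with this limit (so that the reduction to the two component-wise statements is legitimate). Once this compatibility is in place, the proposition follows, since each of the two descent statements is available: \cite{hagII} supplies the \'etale descent for the underlying simplicial rings, and Construction~\ref{constr:logification} together with the strictness hypothesis supplies the descent for the log structures.
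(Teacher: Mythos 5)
Your reduction of hyperdescent to the statement that $(B,N)\to\mathrm{holim}_{\Delta}(C_{\bullet},O_{\bullet})$ is an equivalence in $s\mathcal{L}_{k}$ agrees with the paper, and your ring-side input (\'etale descent for the underlying simplicial rings from \cite{hagII}, using that strict log-\'etale maps have \'etale underlying maps) is also the paper's. The gap is the monoid side. By Construction~\ref{constr:logification}, strictness identifies each $O_{i}$ with a homotopy pushout $N\coprod^{h}_{\alpha_{i}^{-1}(C_{i}^{\times})}C_{i}^{\times}$, so your claim that ``descent for $O_{\bullet}$ reduces to descent for $N$ together with descent for $C_{\bullet}^{\times}$'' amounts to commuting the totalization $\mathrm{holim}_{\Delta}$ past these homotopy pushouts; homotopy limits do not commute with homotopy pushouts, and no argument for this interchange is offered. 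Moreover, descent for units, $B^{\times}\simeq\mathrm{holim}_{\Delta}C_{\bullet}^{\times}$, is not ``detected on $\pi_{0}$'': $\pi_{0}$ of a totalization is not computed degreewise, and to conclude that a component of $B$ which becomes invertible in $C_{0}$ is already invertible in $B$ one needs the conservativity clause in the definition of a strict log-\'etale covering family, which you never invoke; the same issue underlies your unresolved worry about whether the componentwise homotopy limit is again a log simplicial ring. These are exactly the points you defer as ``bookkeeping,'' but they are the actual mathematical content of the monoid half, so as written the proof is incomplete.

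The paper's proof is built to bypass precisely this. Instead of ever computing $\mathrm{holim}_{\Delta}O_{\bullet}$, it observes (i) that strictness is preserved by the \v{C}ech nerve (Lemma~\ref{lem:StrictBaseChange}) and by the formation of the comparison object, so the comparison map is a morphism in the subcategory $\mathrm{Ho}(s\mathcal{L}_{k})^{\str}$ of strict morphisms, and (ii) that the forgetful functor $U$ to simplicial rings is \emph{conservative} on strict morphisms, since for a strict map the target's log structure is the logification of the pushforward of the source's and is therefore controlled by the underlying ring map. Because $U$ commutes with the formation of the comparison object, the entire statement collapses to \'etale descent for simplicial rings in \cite{hagII}, and no separate descent statement for the monoids or the units is ever needed. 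If you wish to keep your componentwise strategy, you would in effect have to prove directly the monoid-side descent statement that this conservativity argument renders unnecessary.
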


\begin{proof} We will only prove the case of a strict log-\'etale 
  representable hypercover, leaving to the reader the general case of a 
  a strict log-\'etale pseudo-representable hypercover (as in the proofs of Lemma 2.2.2.13 and Lemma 1.3.2.3 (2) in \cite{hagII}). By using finite 
  products, we can assume that we are working with a strict log-\'etale 
  covering family given by a single map $(A,M) 
  \rightarrow (B,N)$ in $s\mathcal{P}$.
 We have to show that the morphism $(A,M) \to |(B,N)_{\bullet}|$ is an 
 isomorphism in $\mathrm{Ho}(s\mathcal{L}_k)$. Let 
 $\mathrm{Ho}(s\mathcal{L}_k)^{\str}$ denote the sub-category of 
 $\mathrm{Ho}(s\mathcal{L}_{k})$ spanned by log simplicial rings with 
 strict morphisms. Since strictness is preserved under homotopy 
 colimits, $(A,M) \to |(B,N)_{\bullet}|$ gives a morphism in 
 $\mathrm{Ho}(s\mathcal{L}_k)^{\str}$. Let $U \colon s\mathcal{L}_k \to 
 s\mathrm{Alg}_k$ denote the functor that forgets the log structure. By 
 strictness, the induced functor $U \colon 
 \mathrm{Ho}(s\mathcal{L}_k)^{\str} \to \mathrm{Ho}(s\mathrm{Alg}_k) $ 
 is conservative. The claim then follows from the string of isomorphisms 
 in $\mathrm{Ho}(s\mathrm{Alg}_k)$
    \[
      U(A,M) \to |U(B,N)_{\bullet}| \to U(|(B,N)|_{\bullet})
    \]
    where the first isomorphism comes from descent for the \'etale 
    topology on $\mathrm{dAff}_k$, and the second isomorphism holds because $U$ commutes 
    with homotopy colimits.
\end{proof}

By Proposition \ref{subcanonical}, the $\textrm{Spec}$ functor factors 
as a fully faithful functor $$\textrm{Spec}\colon 
\textrm{Ho}(\dLogAff_{k}) \longrightarrow \textrm{dLogSt}_{k}.$$

\begin{remark}
One might also consider the not necessarily strict log-\'etale model pre-topology on the model category 
$\dLogAff_{k}$. The problem with this model topology is that it is very likely that it is not subcanonical. This is closely related to the fact that
the log-\'etale topology on general (i.e not necessarily fs) log schemes is probably also not subcanonical.
\end{remark}

\subsection{Geometric derived log stacks}

By following the same path as in \cite{hagII}, we give the following inductive definition

\begin{definition} 
A derived log stack is \emph{$(-1)$-geometric} if it is representable, i.e., isomorphic in $\textrm{dLogSt}_{k}$ to $\textrm{Spec}(A,M)$ for some simplicial pre-log $k$-algebra $(A,M)$.
Let $n\geq 0$ be an integer. 
\begin{itemize}
\item A derived log stack $F\in \textrm{dLogSt}_{k}$ is \emph{$n$-geometric} if 
\begin{itemize}
\item the diagonal map $F \longrightarrow F \times F$ is $(n-1)$-representable
\item There exists a family $\{\mathrm{Spec}(A_{i},M_{i})\}_{i\in 
    \textrm I}$ of representable derived stacks, and a morphism 
  $$p\colon \textstyle\coprod_{i}\textrm{Spec}(A_{i},M_{i}) 
  \longrightarrow F,$$ called an \emph{atlas} for $F$, such that
\begin{itemize}
\item the sheafification of $\pi_{0}(p)$ is an epimorphism of sheaves of 
  sets on the site $(\textrm{dLogSt}_{k}, \textrm{str-log-\'et})$; \item 
  the induced morphism $p_i\colon \textrm{Spec}(A_{i},M_{i}) 
  \longrightarrow F$ is log-smooth, for any $i\in \textrm{I}$.
\end{itemize}
\end{itemize}
\item A morphism $f :F \longrightarrow G$ in $\textrm{dLogSt}_{k}$ is \emph{$n$-representable} if for any representable $X$
and any morphism $X \longrightarrow G$, the derived log stack $F\times_{G} X$ is $n$-geometric.
\item An $n$-representable morphism $f :F \longrightarrow G$ in $\textrm{dLogSt}_{k}$ is \emph{log-smooth} if for any representable $X$
and any morphism $X \longrightarrow G$, there exists an atlas $\coprod_{i}Y_{i} \longrightarrow F\times_{G} X$ for $F\times_{G} X$ such that each induced map $Y_i \longrightarrow X$ is log-smooth between representable derived stacks.
\end{itemize}

\end{definition}

The statement of the Artin property for derived log stacks, and the corresponding version of Lurie's representability criterion will be treated in a sequel to this paper.

\begin{remark}(Pre log and log modules.)
If $(A,M)$ be a simplicial pre-log algebra, there is an 
obvious category $\mathbf{PreLogMod}_{(A,M)}$ of pre-log modules 
over $(A,M)$,
whose objects are triples $(S,P, \varphi\colon S\to P )$ where $S$ is a simplicial $M$-module (i.e., a simplicial set endowed with an action of the simplicial monoid $M$), $P$ is a simplicial $A$-module  , and $\varphi$ is a map of simplicial sets that is equivariant with respect to the structure map $\alpha\colon M \to A$, i.e., such that the following diagram commutes \[\xymatrix@-1pc{M \times S \ar[r] \ar[d]_-{\alpha \times \varphi} & M \ar[d]^-{\varphi} \\ A \times P \ar[r] & P }\] and whose morphisms are the natural ones. There is a model structure on $\mathbf{Mod}_{(A,M)}$ where weak equivalences (resp. fibrations) are pairs $(f,g)$ where $f$ is a weak equivalence (resp. a fibration) of simplicial sets, and $g$ is a weak equivalence (resp. a fibration) of simplicial $A$-modules.  Direct and inverse image functors define a Quillen pair, and there is a natural monoidal structure on $\mathbf{PreLogMod}_{(A,M)}$ such that algebras in $\mathbf{Mod}_{(k,1)}$ are exactly pre-log $k$-algebras. However, $\mathbf{PreLogMod}_{(A,M)}$ is very much non additive. This is reflected by the fact that we have functors $$\mathbf{AbGrps}((k,1)/\mathcal{P}/(A,M)) \hookrightarrow \mathbf{AbMonoids}((k,1)/\mathcal{P}/(A,M)) \longrightarrow \mathbf{Mod}_{(A,M)}$$ where the left one is not an equivalence (while it is in the non pre-log case) and the right one is not essentially surjective (while it is an equivalence in the non pre-log case). One might however use \cite{marty}  to define a notion of flat topology on pre-log algebras (viewed as algebras in $\mathbf{Mod}_{(k,1)}$). Unfortunately, these flat maps have flat underlying maps of schemes, so they are not very interesting.

When $(A,M)$ is a simplicial pre-log algebra with structure map $\alpha$, 
there is a log variant $\mathbf{LogMod}_{(A,M)}$ of 
$\mathbf{PreLogMod}_{(A,M)}$, where we only consider those pre 
log modules $(S,P, \varphi\colon S\to P)$ such that  the map 
$\alpha^{-1}(A_{P}^{\times}) \to A_{P}^{\times}$ is a weak equivalence 
(here $A^{\times}_P$ denotes the connected components of $A$ acting as 
equivalences on $P$). We have not fully investigated the homotopy and 
monoidal structures on this category.

From a general point of view, in order to get an alternative theory of derived log geometry along these lines, we think it might be interesting to proceed as follows. Embed the category of (pre) log rings in the category of arrows between commutative monoids. This embedding is not full so something new is obtained. Then we may use the approach sketched in~\cite[\S 5.3]{TV-specZ} and~\cite{marty} to build a Zariski, flat or smooth topology for arrows between $\mathbb{S}_1$-derived schemes (i.e., the geometric objects of derived geometry over the monoidal model category of simplicial sets), and explore the derived geometry of objects arising via gluing (pre) log rings. This would roughly correspond classically to partially disregard the fact that there is an underlying scheme of a log scheme. This work remains to be done, and we feel like it is a worthwhile task since it might yield a new insight in the foundations of classical log geometry, too.

\end{remark}

\section{An example}

This section provides an example of a non-trivial derived log stack. We 
construct a derived version of the logarithmic moduli of stable maps 
introduced by Gross and Siebert.

We begin by producing an inclusion functor from the category of stacks 
over discrete log rings to the category of derived stacks over log 
simplicial rings. To accomplish this, we endow the category of discrete 
log rings with the trivial model structure. Then the inclusion functor
\[
  i \colon \sL_k \to s\sL_k
\]
from the category of log rings under a base ring $k$ to the category of 
log simplicial rings under a base ring $k$ is a right Quillen functor.  
As a consequence we obtain a Quillen adjunction for the categories of 
pre-stacks
\[
  i_{!} \colon \mathcal{S}^{\sL_k} \rightleftarrows
  \mathrm{dLogAff}_k^{\wedge}
  \colon i^*.
\]
Here $\mathcal{S}^{\sL_k}$ is the category of simplicial pre-sheaves on 
$\sL_k$ equipped with the projective model structure.

We equip the category $\sL_k$ with the strict \'etale topology, and 
using the same construction as in Theorem \ref{etalelogstacks} we can 
define the model category of higher log stacks (see \cite[Section 
2.1]{hagII} for the construction of (non-derived) higher stacks in the 
non-logarithmic context).

To verify that the above adjunction descends to the category of stacks 
we have to check that $i$ preserves coproducts, equivalences and 
hypercovers.  The only non-trivial part is to verify that $i$ preserves 
strict \'etale morphisms, since in the discrete case \'etaleness is 
characterized by the vanishing of the one-truncated cotangent complex, 
whereas in the non-discrete case the full cotangent complex must vanish.

\begin{lemma}
  Let $(A,M) \to (B,N)$ be a strict \'etale morphism of discrete 
  log rings. Then $\IL_{(B,N)/(A,M)} \simeq 0$.
\end{lemma}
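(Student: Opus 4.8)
The plan is to reduce the log cotangent complex of a strict map to the ordinary cotangent complex of the underlying ring map, and then to invoke the classical vanishing of the cotangent complex of an étale morphism. Writing $(C,\mathbf 1)$ for a ring $C$ equipped with the trivial pre-log structure (the unit monoid $\{1\}\to (C,\cdot)$), the first step is to exhibit $(A,M)\to(B,N)$ as a homotopy cobase change inside $s\mathcal L$ via the square
\[
\xymatrix@-1pc{(A,\mathbf 1) \ar[r] \ar[d] & (B,\mathbf 1) \ar[d] \\ (A,M) \ar[r] & (B,N).}
\]
Since the identity functor into a left Bousfield localization preserves homotopy pushouts, the homotopy pushout of $(A,M)\leftarrow (A,\mathbf 1)\to(B,\mathbf 1)$ in $s\mathcal L$ is computed as the homotopy pushout in the injective pre-log model structure followed by logification. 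Its ring component is $B$ and its monoid component is $M$ with structure map $M\to (A,\cdot)\to(B,\cdot)$, so the pre-log pushout is exactly the inverse image pre-log structure $(B,f_*M)$ of Definition~\ref{defn:inverse_image_log_str}, whose logification is $(B,(f_*M)^a)$. Because $(A,M)\to(B,N)$ is strict, the map $(f_*M)^a\to N$ is an equivalence, so the square is homotopy cocartesian in $s\mathcal L$.

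Next I would apply the flat base change of Proposition~\ref{basictriangles}(ii) with $(A,\mathbf 1)$ as the initial corner, which yields
\[
B\otimes^{\IL}_{B} \IL_{(B,\mathbf 1)/(A,\mathbf 1)} \simeq \IL_{(B,N)/(A,M)}.
\]
For the trivial pre-log structure the log cotangent complex reduces to the ordinary one: using the Rognes model of Theorem~\ref{thm:comparison-rognes-cot-cx} one has $N^{\gp}/M^{\gp}=0$ and the monoid-ring term is $\IL_{\IZ/\IZ}=0$, so the defining homotopy pushout collapses to $\IL_{(B,\mathbf 1)/(A,\mathbf 1)}\simeq \IL_{B/A}$. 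Combining the two displays gives $\IL_{(B,N)/(A,M)}\simeq \IL_{B/A}$ in $\mathrm D(B)$, which is precisely the reduction promised by the surrounding discussion.

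Finally, a strict log-\'etale morphism of discrete log rings has \'etale underlying ring map $A\to B$: by strictness, Kato's lifting criterion (Definition~\ref{katologetale}) against strict square zero extensions is equivalent to the lifting criterion of $A\to B$ against ordinary square zero extensions, since every such ring extension equipped with the pulled-back log structure is strict. Together with the finite presentation built into log-\'etaleness, this forces $A\to B$ to be formally \'etale and finitely presented, hence \'etale, so that $\IL_{B/A}\simeq 0$ by the classical computation of the cotangent complex of an \'etale map. The identification $\IL_{(B,N)/(A,M)}\simeq \IL_{B/A}$ then yields $\IL_{(B,N)/(A,M)}\simeq 0$. I expect the main obstacle to be the careful verification that the trivial-log-structure square is genuinely homotopy cocartesian in $s\mathcal L$ --- that is, that the pre-log homotopy pushout followed by logification really reproduces $(B,N)$ rather than merely a pre-log ring with the correct underlying ring --- together with making the reduction \emph{strict log-\'etale $\Rightarrow$ underlying \'etale} precise at the level of Kato's discrete lifting criterion.
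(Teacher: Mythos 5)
Your reduction $\IL_{(B,N)/(A,M)} \simeq \IL_{B/A}$ is correct, and it is genuinely different from the paper's treatment: the paper simply \emph{asserts} this equivalence as a consequence of strictness, whereas you derive it by exhibiting $(A,M) \to (B,N)$ as a homotopy cobase change in $s\mathcal L$ of $(A,\mathbf 1) \to (B,\mathbf 1)$ and applying Proposition~\ref{basictriangles}(ii) together with the collapse of the Rognes pushout for trivial pre-log structures. This is a nice self-contained argument; the only caveats are that $(A,\mathbf 1)$ and $(B,\mathbf 1)$ are pre-log rather than log rings (harmless, since the underlying category of $s\mathcal L$ is all of $s\mathcal P$), and that Proposition~\ref{basictriangles}(ii) for pushouts in $s\mathcal L$ (as opposed to $s\mathcal P$) tacitly includes invariance of the log cotangent complex under logification. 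After this point the two proofs diverge: the paper quotes the characterization of discrete log-\'etaleness by vanishing of $\tau_{\leq 1}$ of the log cotangent complex, transports it through strictness to $\tau_{\leq 1}\IL_{B/A}\simeq 0$, and concludes by Illusie's theorem that this forces $\IL_{B/A}\simeq 0$; you instead transport Kato's lifting criterion itself to the underlying ring map $A \to B$ and then quote the classical vanishing for \'etale ring maps (the same Illusie-level input, repackaged). Your version has the virtue of making the statement ``strict log-\'etale implies underlying \'etale'' explicit.

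It is in that transport step that your proof has a genuine gap: Definition~\ref{katologetale} grants (unique) lifting only against strict square zero extensions of \emph{integral} discrete log rings. Given an ordinary square zero extension $\pi\colon R \to S$ and a lifting problem for $A \to B$, the log structures you must place on it are essentially forced: $Q = (g_*N)^a$ on $S$ (inverse image along $g\colon B \to S$, Definition~\ref{defn:inverse_image_log_str}) and $P = \pi^*Q$ on $R$ (Definition~\ref{defn:direct_image_log_str}); then $(R,P)\to(S,Q)$ is indeed a strict square zero extension, and log lifts biject with ring lifts because $P = Q\times_{(S,\cdot)}(R,\cdot)$ determines the monoid component of any lift (so only one direction of your claimed ``equivalence'' is even needed). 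But nothing in the statement of the lemma makes $(S,Q)$ integral: $N$ is not assumed integral, and even if it were, you would still owe the (true, but not formal) verification that $(g_*N)^a = N\oplus_{\gamma^{-1}(S^\times)}S^\times$ is again integral, where $\gamma$ is the composite $N \to (B,\cdot)\to(S,\cdot)$ --- for instance by writing this pushout as the localization of $N$ at the face $\gamma^{-1}(S^\times)$ followed by a quotient of the resulting integral monoid by a group acting through units, both of which preserve cancellativity. As written, then, your argument establishes the lemma only for integral $N$. To be fair, the paper's proof hides a cousin of the same issue inside its unproved claim that discrete \'etaleness is characterized by vanishing of the one-truncated cotangent complex (the test objects needed there are square zero extensions of $(B,N)$ itself, which are integral only when $N$ is); but since you invoke Definition~\ref{katologetale} directly, the burden is explicit in your argument and must be discharged, either by adding integrality of $N$ as a hypothesis (consistent with the fine and saturated context of this section) or by supplying the integrality verification above.
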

\begin{proof}
  Since the morphism is strict, we have an equivalence 
  $\IL_{(B,N)/(A,M)} \simeq \IL_{B/A}$, so that we deduce that 
  $\tau_{\leq 1} \IL_{B/A} \simeq 0$. But by \cite[Prop.  
  3.1.1]{illusie} this implies that $\IL_{B/A} \simeq 0$. We conclude 
  using again the equivalence $\IL_{(B,N)/(A,M)} \simeq \IL_{B/A}$.
\end{proof}

In practice, one usually deals with stacks not defined over the entire 
category of log rings, but only the category of fine and saturated log 
rings. Such a log stack over the category of fine and saturated log 
rings is usually defined as a category fibred in groupoids over this 
category. Using the inclusion functor from groupoids to simplicial sets 
and the Grothendieck construction, we can view every such category 
fibred in groupoids as a simplicial set valued functor on the category 
of fine and saturated log rings. The main example we have in mind is the 
following.
\begin{example} \cite[Def. 1.3]{siebert}
  \label{ex:Mgn}
  Assume our base is a separably closed field $k$.  Denote by 
  $M_{g,n}^{\log, \pre}$ the functor that assigns to every fine and 
  saturated log ring $(A,M)$ the groupoid of proper log-smooth and 
  integral morphisms $f \colon (C, \mathcal{M}) \to \Spec (A,M)$ 
  together with $n$ sections $s_i \colon \Spec (A,M) \to (C, 
  \mathcal{M})$ such that every fibre of $f$ is a reduced and connected 
  curve of genus $g$, and if $U \subset C$ is the non-critical locus of 
  $f$, then $\mathcal{M} | _U \simeq f^* M \oplus \bigoplus_i (s_i)_* 
  \IN_A $.
\end{example}

\begin{remark}
  Note that since we are in the relative situation over a separably 
  closed field $k$ and since in the above examples the log schemes are 
  assumed to be fine and saturated, this ensures that the geometric 
  fibres have at worst nodal singularities by \cite[Theorem 1.3]{fkato}.
\end{remark}

If we now let $\mathcal{L}_k^{\fs}$ denote the category of fine and 
saturated log rings, we then have an inclusion $j \colon 
\mathcal{L}_k^{\fs} \to \mathcal{L}_k$. Arguing as above, we obtain an 
adjunction
\[
  j_{!} \colon \mathcal{S}^{\sL _k^{\fs}} \rightleftarrows 
  \mathcal{S}^{\sL_k}
  \colon j^*
\]
between the categories of simplicial pre-sheaves equipped with the 
projective model structures, and this again descends to the categories 
of stacks with respect to the strict \'etale topology.

Using the composition $i_! \circ j_!$ we can regard any category fibred 
in groupoids over the category of fine and saturated log schemes as a 
derived log stack. By combining this composition and Example 
\ref{ex:Mgn} we can construct the derived moduli of stable maps over a 
fine and saturated base log $k$-scheme $(S, \mathcal{M}_S)$.

\begin{definition}
  Let $(S, \mathcal{M})$ be a fine and saturated log scheme over a 
  separably closed field $k$, and denote by $
  \mathrm{dLogSt}_{(S,\mathcal{M})}$ the comma category of 
  $\mathrm{dLogSt}_k$ over $(S,\mathcal{M})$.  Let $C$ denote the 
  universal curve over $M_{g,n}^{\log,\pre}$, and let $X$ be a derived 
  affine log scheme over $(ij)_{!}S$. We then defined the derived moduli 
  of stable maps as
  \[
    M(X) =  \Map_{\mathrm{dLogSt}_{(S,\mathcal{M})}/(ij)_!  
      M_{g,n}^{\log, \pre}} \left( (ij)_{!}C, X \times 
      (ij)_{!}M_{g,n}^{\log,\pre} \right)
  \]
\end{definition}

Note that we have not proven that $M(X)$ is algebraic. We hope to return 
to this in a future paper. If an Artin-Lurie type representability 
theorem \cites{artin, DAGXIV} for derived log stacks were available, 
this would be an immediate consequence. Once algebraicity is proven one 
can compute the cotangent complex of the derived moduli of stable maps.  
This will coincide with the perfect obstruction theory used in 
\cite{siebert}.  The functoriality of the cotangent complex would be the 
major advantage of working with derived moduli, as similar statements 
for the perfect obstruction theory are in general difficult to obtain.

An important problem outlined in Gross-Siebert is to identify 
interesting quasi-compact substacks of the derived moduli of stable log 
maps. As the topology of the derived and the underived moduli are the 
same our approach does not suggest anything on this problem.

\begin{appendix}
\section{}\label{sec:appendixA}

\noindent For the readers' convenience, we will give a proof of Proposition \ref{prop:cruciallemma}. 

\begin{proof}[Proof of Proposition \ref{prop:cruciallemma}] Let $\pi\colon R \rightarrow S$ be a square zero extension of discrete commutative rings, and let $J= \ker \pi$ be the corresponding square zero ideal. Then we have to show that there exists a derivation \[d \in \pi_{0}\Map_{R/s\mathcal{A}/S}(S,S \oplus J[1])\] such that there exists an isomorphism in $\mathrm{Ho}(s\mathcal{A}/S)$, between $\pi\colon R \rightarrow  S$ and the canonical projection $p_{d}\colon S \oplus_{d} J \rightarrow S,$ where $p_{d}$ is defined by the homotopy pullback diagram 
$$ \xymatrix@-1pc{S \oplus_{d} J \ar[r] \ar[d]_-{p_{d}} & S \ar[d]^-{0} \\ S \ar[r]_-{d} & S \oplus J[1].}$$

We will give two proofs, one working in any characteristic and the other, considerably simpler, working in characteristic zero. We begin with the general case.

Let $\pi\colon R \rightarrow S$ be a surjection of commutative algebras with square zero ideal $J= \ker \pi$.
As a first step, we apply the functor $-\otimes_{R}^{\mathbb{L}} S$ to  the cofiber sequence \[\xymatrix@-1pc{R \ar[r]^-{\pi} & S \ar[r] & J[1],}\]  and  obtain a split fiber sequence. The splitting map gives a map $$\psi\colon S \otimes_{R}^{\mathbb{L}} S \longrightarrow S \oplus J[1]$$ in $\mathrm{Ho}(S /s\mathcal{A}/ S)$, where \[\xymatrix@-1pc{&  S & \\
S \otimes_{R}^{\mathbb{L}} S \ar[rr] \ar[ur]^-{\mu}  & & S \oplus J[1] \ar[ul]_-{\textrm{pr}_{1}} \\
&  S \ar[ul]^-{j_{1}} \ar[ur]_-{0} & }\] commutes, $\mu$ being induced by the product map, and $j_1$ being induced by $y \longmapsto y\otimes 1$. By computing the action of $\psi$ on homotopy groups, we see that $$\psi_{\leq 1}:= \tau_{\leq 1}(\psi)\colon \tau_{\leq 1}(S \otimes_{R}^{\mathbb{L}} S) \longrightarrow \tau_{\leq 1}(S \oplus J[1]) \simeq S \oplus J[1]$$ is an isomorphism in $\mathrm{Ho}(S /s\mathcal{A}/ S)$.

As a second step, we define $d\colon S \longrightarrow S \oplus J[1]$ as the composite \[\xymatrix{S \ar[r]^-{j_2} & S \otimes_{R}^{\mathbb{L}} S \ar[r] & \tau_{\leq 1}(S \otimes_{R}^{\mathbb{L}} S) \ar[r]^-{\psi_{\leq 1}}  & S \oplus J[1]}\] where $j_2$ is induced by $y\longmapsto 1\otimes y$. Observe that, by the first step, $d$ is a section of the projection $\textrm{pr}_{1}\colon S \oplus J[1] \longrightarrow S$.

As a third step, we observe that since the two composites \[\xymatrix{R \ar[r]^-{\pi} & S \ar[r]^-{j_1} & S \otimes_{R}^{\mathbb{L}} S}, \,\,\,\,\xymatrix{R \ar[r]^-{\pi} & S \ar[r]^-{j_2} & S \otimes_{R}^{\mathbb{L}} S}\] coincide, we get an induced canonical map 
$\alpha\colon R \longrightarrow S\oplus_{d} J$, where $S\oplus_{d} J$ is defined by the homotopy pullback diagram 
\[\xymatrix@-1pc{S\oplus_{d} J \ar[d]_-{p} \ar[r] & S \ar[d]^-{0} \\ S \ar[r]^-{d} & S \oplus J[1].}\] Moreover, if we view $S\oplus_{d} J$ as an object in $\textrm{Ho}(s\mathcal{A}/S)$ via $p$, then $\alpha$ is a morphism in $\textrm{Ho}(s\mathcal{A}/S)$.

By computing the action of $\alpha\colon R \longrightarrow S\oplus_{d} J$ on homotopy groups, it is easy to check that it is an isomorphism in $\textrm{Ho}(s\mathcal{A}/S)$.

We now give an alternative proof in characteristic zero. As 
above, let $\pi\colon R \rightarrow S$ be a surjection of commutative 
algebras with square zero ideal $J= \ker \pi$. If the base 
commutative ring $k$ is a $\mathbb{Q}$-algebra, the homotopy theories of 
simplicial commutative $k$-algebras and of differential non-positively 
graded commutative $k$-algebras (cdga's for short) are equivalent. So we 
are allowed to work with cdga's. Note that $S \oplus J[1]$ can then be 
represented by the cdga \[\xymatrix{0 \ar[r] & J \ar[r]^-{0} & S 
  \ar[r] & 0}\] where $S$ sits in degree $0$. The $0$ derivation is 
then represented by the commutative diagram 
\[\xymatrix@-1pc{0 \ar[r] & 0 \ar[r] \ar[d]^-{0} & S \ar[d]^-{\textrm{id}} 
  \ar[r] & 0 \\ 0 \ar[r] & J \ar[r]^-{0} & S \ar[r] & 0}\]

Observe that we may represent $S$ also by the cdga \[\xymatrix{0 
  \ar[r] & J \ar[r]^-{i} & R \ar[r] & 0,}\] where $i$ denotes the 
inclusion map.  Then we can define a derivation $d$ by the 
commutative diagram \[\xymatrix@-1pc{0 \ar[r] & J \ar[r]^-{i} 
  \ar[d]^-{\textrm{id}} & R \ar[d]^-{\pi} \ar[r] & 0 \\ 0 \ar[r] & J 
  \ar[r]^-{0} & S \ar[r] & 0,}\] and remark that $d$ is a 
fibration of cdga's. Since the model category of cdga's is proper, the 
ordinary pullback of the zero derivation and of $d$ computes the 
homotopy pullback $S\oplus_{d} J$. But the ordinary pullback 
is given by just \[\xymatrix{0 \ar[r] & 0 \ar[r]  & R \ar[r] & 0}\]
(i.e., by just $R$ sitting in degree $0$). So we conclude that there is 
an isomorphism $R \simeq S\oplus_{d} J$ in the homotopy 
category of cdga's$/S$.
\end{proof}
\end{appendix}


\begin{bibdiv}
\begin{biblist}

\bib{abramovich}{incollection}{
      author={Abramovich, Dan},
      author={Chen, Qile},
      author={Gillam, Danny},
      author={Huang, Yuhao},
      author={Olsson, Martin},
      author={Satriano, Matthew},
      author={Sun, Shenghao},
       title={Logarithmic geometry and moduli},
        date={2013},
   booktitle={Handbook of moduli. {V}olume {II}.},
   publisher={Somerville, MA: International Press; Beijing: Higher Education
  Press},
       pages={1\ndash 62},
}

\bib{artin}{article}{
      author={Artin, M.},
       title={Versal deformations and algebraic stacks},
        date={1974},
        ISSN={0020-9910},
     journal={Invent. Math.},
      volume={27},
       pages={165\ndash 189},
      review={\MR{0399094 (53 \#2945)}},
}

\bib{beilinson}{article}{
      author={Beilinson, A.},
       title={{$p$}-adic periods and derived de {R}ham cohomology},
        date={2012},
        ISSN={0894-0347},
     journal={J. Amer. Math. Soc.},
      volume={25},
      number={3},
       pages={715\ndash 738},
         url={http://dx.doi.org/10.1090/S0894-0347-2012-00729-2},
      review={\MR{2904571}},
}

\bib{Bousfield-F_Gamma-bisimplicial}{incollection}{
      author={Bousfield, A.~K.},
      author={Friedlander, E.~M.},
       title={Homotopy theory of {$\Gamma $}-spaces, spectra, and bisimplicial
  sets},
        date={1978},
   booktitle={Geometric applications of homotopy theory ({P}roc. {C}onf.,
  {E}vanston, {I}ll., 1977), {II}},
      series={Lecture Notes in Math.},
      volume={658},
   publisher={Springer},
     address={Berlin},
       pages={80\ndash 130},
      review={\MR{MR513569 (80e:55021)}},
}

\bib{bh}{misc}{
      author={Bhatt, Bhargav},
       title={$p$-adic derived de {R}ham cohomology},
        date={2012},
        note={\arxivlink{1204.6560}},
}

\bib{Friedlander-M_filtrations}{article}{
      author={Friedlander, Eric~M.},
      author={Mazur, Barry},
       title={Filtrations on the homology of algebraic varieties},
        date={1994},
        ISSN={0065-9266},
     journal={Mem. Amer. Math. Soc.},
      volume={110},
      number={529},
       pages={x+110},
        note={With an appendix by Daniel Quillen},
      review={\MR{1211371 (95a:14023)}},
}

\bib{Goerss-J_simplicial}{book}{
      author={Goerss, Paul~G.},
      author={Jardine, John~F.},
       title={Simplicial homotopy theory},
      series={Progress in Mathematics},
   publisher={Birkh\"auser Verlag},
     address={Basel},
        date={1999},
      volume={174},
        ISBN={3-7643-6064-X},
      review={\MR{MR1711612 (2001d:55012)}},
}

\bib{gabber}{book}{
      author={Gabber, Ofer},
      author={Ramero, Lorenzo},
       title={Foundations for almost ring theory},
        date={2014},
}

\bib{siebert}{article}{
      author={Gross, Mark},
      author={Siebert, Bernd},
       title={Logarithmic {G}romov-{W}itten invariants},
        date={2013},
        ISSN={0894-0347},
     journal={J. Amer. Math. Soc.},
      volume={26},
      number={2},
       pages={451\ndash 510},
         url={http://dx.doi.org/10.1090/S0894-0347-2012-00757-7},
      review={\MR{3011419}},
}

\bib{Hirschhorn_model}{book}{
      author={Hirschhorn, Philip~S.},
       title={Model categories and their localizations},
      series={Mathematical Surveys and Monographs},
   publisher={American Mathematical Society},
     address={Providence, RI},
        date={2003},
      volume={99},
        ISBN={0-8218-3279-4},
      review={\MR{MR1944041 (2003j:18018)}},
}

\bib{illusie}{book}{
      author={Illusie, Luc},
       title={Complexe cotangent et d\'eformations. {I}},
      series={Lecture Notes in Mathematics, Vol. 239},
   publisher={Springer-Verlag},
     address={Berlin},
        date={1971},
      review={\MR{0491680 (58 \#10886a)}},
}

\bib{fkato}{article}{
      author={Kato, Fumiharu},
       title={Log smooth deformation and moduli of log smooth curves},
        date={2000},
        ISSN={0129-167X},
     journal={Internat. J. Math.},
      volume={11},
      number={2},
       pages={215\ndash 232},
         url={http://dx.doi.org/10.1142/S0129167X0000012X},
      review={\MR{1754621 (2001d:14016)}},
}

\bib{ka}{incollection}{
      author={Kato, Kazuya},
       title={Logarithmic structures of {F}ontaine-{I}llusie},
        date={1989},
   booktitle={Algebraic analysis, geometry, and number theory ({B}altimore,
  {MD}, 1988)},
   publisher={Johns Hopkins Univ. Press},
     address={Baltimore, MD},
       pages={191\ndash 224},
      review={\MR{1463703 (99b:14020)}},
}

\bib{li1}{article}{
      author={Li, Jun},
       title={Stable morphisms to singular schemes and relative stable
  morphisms},
        date={2001},
        ISSN={0022-040X},
     journal={J. Differential Geom.},
      volume={57},
      number={3},
       pages={509\ndash 578},
         url={http://projecteuclid.org/getRecord?id=euclid.jdg/1090348132},
      review={\MR{1882667 (2003d:14066)}},
}

\bib{li2}{article}{
      author={Li, Jun},
       title={A degeneration formula of {GW}-invariants},
        date={2002},
        ISSN={0022-040X},
     journal={J. Differential Geom.},
      volume={60},
      number={2},
       pages={199\ndash 293},
         url={http://projecteuclid.org/getRecord?id=euclid.jdg/1090351102},
      review={\MR{1938113 (2004k:14096)}},
}

\bib{lurie-HA}{misc}{
      author={Lurie, Jacob},
       title={Higher algebra},
        date={2012},
        note={Preprint, available at
  \url{http://www.math.harvard.edu/~lurie/}},
}

\bib{DAGXIV}{misc}{
      author={Lurie, Jacob},
       title={Representability theorems},
        date={2012},
        note={Preprint, available at
  \url{http://www.math.harvard.edu/~lurie/}},
}

\bib{marty}{article}{
      author={Marty, Florian},
       title={Smoothness in {R}elative {G}eometry},
        date={2013},
        ISSN={1865-2433},
     journal={J. K-Theory},
      volume={12},
      number={3},
       pages={461\ndash 491},
         url={http://dx.doi.org/10.1017/is013010008jkt242},
      review={\MR{3165184}},
}

\bib{olsson}{article}{
      author={Olsson, Martin~C.},
       title={The logarithmic cotangent complex},
        date={2005},
        ISSN={0025-5831},
     journal={Math. Ann.},
      volume={333},
      number={4},
       pages={859\ndash 931},
         url={http://dx.doi.org/10.1007/s00208-005-0707-6},
      review={\MR{2195148 (2006j:14017)}},
}

\bib{Quillen_homotopical}{book}{
      author={Quillen, Daniel~G.},
       title={Homotopical algebra},
      series={Lecture Notes in Mathematics, No. 43},
   publisher={Springer-Verlag},
     address={Berlin},
        date={1967},
      review={\MR{0223432 (36 \#6480)}},
}

\bib{Rezk_every-proper}{article}{
      author={Rezk, Charles},
       title={Every homotopy theory of simplicial algebras admits a proper
  model},
        date={2002},
        ISSN={0166-8641},
     journal={Topology Appl.},
      volume={119},
      number={1},
       pages={65\ndash 94},
         url={http://dx.doi.org/10.1016/S0166-8641(01)00057-8},
      review={\MR{1881711 (2003g:55033)}},
}

\bib{rog-log}{incollection}{
      author={Rognes, John},
       title={Topological logarithmic structures},
        date={2009},
   booktitle={New topological contexts for {G}alois theory and algebraic
  geometry ({BIRS} 2008)},
      series={Geom. Topol. Monogr.},
      volume={16},
   publisher={Geom. Topol. Publ., Coventry},
       pages={401\ndash 544},
         url={http://dx.doi.org/10.2140/gtm.2009.16.401},
      review={\MR{2544395 (2010h:14029)}},
}

\bib{Rognes-S-S_log-THH}{misc}{
      author={Rognes, J.},
      author={Sagave, S.},
      author={Schlichtkrull, C.},
       title={Localization sequences for logarithmic topological {H}ochschild
  homology},
        date={2014},
        note={\arxivlink{1402.1317}},
}

\bib{Schwede_spectra-cotangent}{article}{
      author={Schwede, Stefan},
       title={Spectra in model categories and applications to the algebraic
  cotangent complex},
        date={1997},
        ISSN={0022-4049},
     journal={J. Pure Appl. Algebra},
      volume={120},
      number={1},
       pages={77\ndash 104},
         url={http://dx.doi.org/10.1016/S0022-4049(96)00058-8},
      review={\MR{1466099 (98h:55027)}},
}

\bib{Simpson_ht-hc}{book}{
      author={Simpson, Carlos},
       title={Homotopy theory of higher categories},
      series={New Mathematical Monographs},
   publisher={Cambridge University Press},
     address={Cambridge},
        date={2012},
      volume={19},
        ISBN={978-0-521-51695-2},
      review={\MR{2883823 (2012m:18019)}},
}

\bib{Sagave-S_group-compl}{article}{
      author={Sagave, Steffen},
      author={Schlichtkrull, Christian},
       title={Group completion and units in {I}-spaces},
        date={2013},
        ISSN={1472-2747},
     journal={Algebr. Geom. Topol.},
      volume={13},
      number={2},
       pages={625\ndash 686},
         url={http://dx.doi.org/10.2140/agt.2013.13.625},
      review={\MR{3044590}},
}

\bib{hagI}{article}{
      author={To{\"e}n, Bertrand},
      author={Vezzosi, Gabriele},
       title={Homotopical algebraic geometry. {I}. {T}opos theory},
        date={2005},
        ISSN={0001-8708},
     journal={Adv. Math.},
      volume={193},
      number={2},
       pages={257\ndash 372},
         url={http://dx.doi.org/10.1016/j.aim.2004.05.004},
      review={\MR{2137288 (2007b:14038)}},
}

\bib{hagII}{article}{
      author={To{\"e}n, Bertrand},
      author={Vezzosi, Gabriele},
       title={Homotopical algebraic geometry. {II}. {G}eometric stacks and
  applications},
        date={2008},
        ISSN={0065-9266},
     journal={Mem. Amer. Math. Soc.},
      volume={193},
      number={902},
       pages={x+224},
      review={\MR{2394633 (2009h:14004)}},
}

\bib{TV-specZ}{article}{
      author={To{\"e}n, Bertrand},
      author={Vaqui{\'e}, Michel},
       title={Au-dessous de {${\rm Spec}\,\mathbb Z$}},
        date={2009},
        ISSN={1865-2433},
     journal={J. K-Theory},
      volume={3},
      number={3},
       pages={437\ndash 500},
         url={http://dx.doi.org/10.1017/is008004027jkt048},
      review={\MR{2507727 (2010j:14006)}},
}

\end{biblist}
\end{bibdiv}

\end{document}